\algrenewcommand\algorithmicindent{0.75em} 
\tikzset{
    circ/.style={draw, circle,inner sep=0pt,minimum size=8mm, font=\scriptsize},
    triangle/.tip={Computer Modern Rightarrow[open,angle=120:3pt]}
}
\newtheorem{theorem}{Theorem}
\newtheorem{proposition}{Proposition}
\newtheorem{definition}{Definition}
\newtheorem{remark}{Remark}
\newtheorem{assumption}{Assumption}
\newtheorem{example}[theorem]{Example}
\def\P{{\mathbb P}}
\newcommand{\E}{\mathbb{E}}
\newcommand{\R}{\mathbb{R}}
\newcommand{\Z}{\mathbb{Z}}
\newcommand{\N}{\mathbb{N}}
\renewcommand{\H}{\mathcal{H}}
\newcommand{\A}{\mathcal{A}}
\newcommand{\mydef}{:=}
\title{\textbf{\Large{A new look at perfect simulation for chains with infinite memory}}}
\author{Emilio De Santis\footnote{Sapienza University of Rome, Department of Mathematics Piazzale Aldo Moro, 5, 00185, Rome, Italy
E-mail: desantis@mat.uniroma1.it} \ and Kádmo Laxa\footnote{Faculdade de Filosofia, Ciências e Letras de Ribeirão Preto, Universidade de São Paulo, Av. Bandeirantes, 3900,
Ribeirão Preto-SP, 14040-901, Brazil.
E-mail: kadmo.laxa@usp.br} \ and Eva Löcherbach\footnote{CMAP, Ecole Polytechnique, Institut Polytechnique de Paris, 91120 Palaiseau, France. E-mail: eva.loecherbach@polytechnique.edu}}
\date{\today}
\begin{document}

\maketitle


\begin{abstract}
In this article we introduce two new perfect simulation algorithms for chains with infinite memory. Both algorithms belong to the coupling of past procedures. The novelty of our approach is that it allows to include unknown states to the possible past symbols such that we can also deal with sparsely distributed past dependencies. In our first algorithm, spontaneous occurrence of symbols is possible. This means that there is a positive probability that the chain chooses the next symbol independently of the past. Our second algorithm deals with the case in which spontaneous occurrence of symbols is not possible. Chains with infinite memory are discrete-time stochastic processes in which the distribution of the next symbol depends on all past symbols. These transition probabilities are described by a probability kernel. Our results give conditions on the way the dependency of the transition kernel on long past strings decays, guaranteeing that our algorithms stop after a finite number of steps almost surely.  Strengthening these conditions, we show that the mean number of steps of our algorithms is finite. We discuss the consequence of having a coupling from the past algorithm with such properties and we present examples in which our results can be applied while others result in the literature cannot be applied.

\textbf{Keywords: } Perfect simulation, Chains of infinite order, Coupling from the past.   

AMS MSC: 60G10, 68W20.  	  	
\end{abstract}

\section{Introduction}
In this article, we introduce two new perfect simulation algorithms for chains with infinite memory. Chains with infinite memory, also known in the literature as \textit{chains of infinite order}, are discrete-time stochastic processes in which the distribution of the next symbol depends on all past symbols. They were first studied by \cite{completeconections} under the name \textit{chains with complete connections}. This class of stochastic processes generalizes the finite memory dependence of Markov chains, and the chains with memory of variable length introduced by \cite{rissanen} are a subclass of them.

By {\it perfect simulation algorithm} we mean a procedure to obtain a sample of a stochastic process in its stationary regime. To do so, we construct a convenient function that maps independent uniform random variables into the stationary sample of the process. Such a  function is called {\it coupling function}. The key idea is to explore the past to determine a {\it relevant subset} of all possible past sequences that suffices to sample from the stationary process within a prescribed finite time window. 

Commonly used procedures are the \textit{coupling from the past} algorithms which have been introduced in the seminal work of \cite{cftp} for Markov chains.  \cite{cff} extends the ideas of this algorithm to introduce a perfect simulation algorithm for chains with infinite memory. This subject was further explored by other articles, which present different perfect simulation algorithms for chains with infinite memory in the coupling from the past framework (see, for example, \cite{gallo}, \cite{desantispiccioni}, \cite{gallogarcia},\cite{gallotakahashi} and \cite{ciftp}).


We propose two perfect simulation algorithms for chains with infinite memory that belong to the coupling of past procedures. The novelty of our approach is that it allows to include unknown states to the possible past symbols such that we can also deal with sparsely distributed past dependencies. In our first algorithm, Algorithm \ref{algo:ps}, spontaneous occurrence of symbols is possible. This means that there is a positive probability that the chain chooses the next symbol independently of the past. The procedure proposed by Algorithm \ref{algo:ps} goes backward in time searching for spontaneously generated symbols. Once a spontaneous symbol is obtained, we go forward in time, updating the symbols that are not spontaneous. This procedure is repeated until the symbols of interest are obtained. Each spontaneous symbol is an information about the past history that works as a source for updating the following symbols.

Algorithm \ref{algo:ps2} deals with the case in which spontaneous occurrence of symbols is not possible. In this case, the above regenerative construction is not possible any more. 
Instead, we follow the original idea proposed by \cite{cftp}. We simultaneously construct the process starting from all possible past strings of a certain length. When all these simultaneous processes hit a single symbol, we can assign this symbol as the original process value at this precise time. The symbols generated by this simultaneous construction of the process with different pasts are the source we need to replicate the procedure introduced by Algorithm \ref{algo:ps}.

Let us now informally present our results. Theorems \ref{teo:cftp} and \ref{coro:cftp} show that Algorithms \ref{algo:ps} and \ref{algo:ps2} belong to the family of \textit{coupling from the past} algorithms. We then give conditions on the way the dependency of the transition kernel on long pasts decays guaranteeing that Algorithms \ref{algo:ps} and \ref{algo:ps2} stop after a finite number of steps almost surely.  Strengthening these conditions, we show that the mean number of steps of Algorithm \ref{algo:ps} and \ref{algo:ps2} is finite. This is the content of Theorems \ref{teo:infinitepath}, \ref{teo:renewal}, \ref{teo:algops2_1} and \ref{teo:algops2_2}.

The existence of perfect simulation algorithms that stop after a finite number of steps almost surely, which is guaranteed by Theorems \ref{teo:infinitepath}, \ref{teo:renewal}, \ref{teo:algops2_1} and \ref{teo:algops2_2},  has mathematical consequences which are well known. The process generated by each of our perfect simulation algorithms is in fact a sample of the unique stationary measure of the chain. This measure satisfies a loss of memory property, and the chain admits the concentration of measure phenomenon. Moreover, there exists a regeneration scheme for the chain. These results are summarized in Propositions \ref{prop:consequences}, \ref{prop:consequences2} and \ref{prop:gallotakahashi}.

Our construction is original and, as far as we know, the results we prove are more general than those presented in the literature, since we allow unknown states within the possible past strings such that we may also deal with sparsely distributed past dependencies. In particular, we present examples of chains in which our results guarantee that our perfect simulation algorithm stops in a finite time almost surely, while other results in the literature cannot guarantee this (see Sections \ref{sec:examples} and \ref{sec:dicussion}).

Our article is organized as follows. In Section \ref{sec:algonotationresults} we present our algorithms, present the basic notation, and state the main results. In Section \ref{sec:examples} we present some examples illustrating the conditions in which our algorithms can be used. In Section \ref{sec:proofteocftp} we prove Theorems \ref{teo:cftp} and \ref{coro:cftp}. In Section \ref{sec:proofsalgo1} we prove Theorems \ref{teo:infinitepath}, \ref{teo:renewal}, \ref{teo:algops2_1} and \ref{teo:algops2_2}. In Section \ref{sec:dicussion} we compare our results with other results in the literature and make some final observations.

\section{The algorithms, basic notation and main results} \label{sec:algonotationresults}

Let $\Z=\{\ldots,-2,-1,0,+1,2,\ldots\}$ be the set of integer numbers and $\N=\{0,1,2,\ldots\}$ be the set of natural numbers. Consider a set ${E}$ and let $ (a_n)_{n \in \Z } \in E^\Z $ be a sequence indexed by $ \Z $ with values in $E.$  For any $z\in \Z$ and $m\in \N$, let
$$
a_z^{z+m}\mydef (a_{z+m}, a_{z+m-1},\ldots, a_z) \in E^{m+1}
$$
and
$$
a_{-\infty}^{z} \mydef (a_{z}, a_{z-1},\ldots) \in E^{\N},
$$
reading a sequence from the present to the past. By convention, $a_z^{z-m}$ is the empty string if $ m > 0.$ For any $z_1,z_2\in \Z$ and $m_1,m_2\in \N$, we denote the concatenation of the strings $a_{z_1}^{z_1+m_1} \in E^{m_1+1}$ and $b_{z_2}^{z_2+m_2} \in E^{ m_2 +1 }$ as follows
$$
a_{z_1}^{z_1+m_1}b_{z_2}^{z_2+m_2} \mydef (a_{z_1+m_1}, a_{z_1+m_1-1},\ldots, a_{z_1}, b_{z_2+m_2}, b_{z_2+m_2-1},\ldots, b_{z_2} )\in E^{(m_1+1)+(m_2+1)} .
$$

\begin{definition} \label{def:kernel_and_adimissible_histories}
Let $\A$ be a countable and ordered set. A probability kernel is a function $p:\A \times \A^{\N} \to [0,1]$ such that
for any $a_{-\infty}^{-1} \in \A^{\N}$, $p(\cdot | a_{-\infty}^{-1}):\A \to [0,1]$ is a probability distribution. A process $(X_n)_{n \in \Z}$ is \textbf{compatible} with the probability kernel $p=(p(\cdot |a_{-\infty}^{-1}):a_{-\infty}^{-1} \in \A^{\N})$ if for all $g \in \A, $ for all $n,$ almost surely, 
$$ \P ( X_n = g | X^{n-1}_{- \infty} ) = p ( g |  X^{n-1}_{- \infty} ) .$$
\end{definition}

A probability kernel is extended to conditional distributions of finite sequences, given the past, by putting for any $n\geq 2$ and for any $a_{-\infty}^{-1} \in \A^{\N}$,
$$
p(a_{-n}^{-1}|a_{-\infty}^{-(n+1)})\mydef p(a_{-(n-1)}^{-1}|a_{-\infty}^{-n})p(a_{-n}|a_{-\infty}^{-(n+1)}).
$$
In what follows we will often restrict attention to infinite past sequences which are compatible with the transition kernel $p.$ To do so, we define the set of admissible histories as follows
\begin{equation} \label{eq:admissiblehistories}
\H := \left\{a_{-\infty}^{-1}\in \A^{\N}: \sup_{b_{-\infty}^{-(n+1)} \in \A^{\N}}p(a_{-n}^{-1}|b_{-\infty}^{-(n+1)})>0, \forall n\geq 1 \right\}.
\end{equation}

The set $\H$ contains all infinite strings $a_{-\infty}^{-1} \in \A^{\N}$ in which any finite substring $a_{-n}^{-1} \in \A^{n}$ is possible, for $n\geq1$. By possible we mean that there exists $b_{-\infty}^{-(n+1)} \in \A^{\N}$ such that $p(a_{-n}^{-1}|b_{-\infty}^{-(n+1)})>0$. $\H$ satisfies the \textit{remove invariance} property ($a_{-\infty}^{-1} \in \H$ implies that $a_{-\infty}^{-2} \in \H$) and the \textit{add invariance} property ($a_{-\infty}^{-1} \in \H$ and $p(g|a_{-\infty}^{-1})>0$ implies that $ga_{-\infty}^{-1} \in \H$).

\begin{proposition} \label{prop:admissiblehistories} If $(X_n)_{n\in \Z}$ is compatible with $p$, then $
\P(X_{-\infty}^{n} \in \H, \text{ for all } n\in \Z)=1.
$
    
\end{proposition}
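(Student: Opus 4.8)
The plan is to reduce the statement to a single-step positivity property of the realized trajectory and then to rebuild the finite substrings by the chain rule encoded in the extended kernel. Concretely, I would first fix $n \in \Z$ and show that, almost surely, $p(X_n \mid X_{-\infty}^{n-1}) > 0$. This is the one genuinely probabilistic input, and it follows directly from compatibility. Since $\A$ is countable, I can write
$$
\P\big(p(X_n \mid X_{-\infty}^{n-1}) = 0 \,\big|\, X_{-\infty}^{n-1}\big) = \sum_{g \in \A} \ind\{ p(g \mid X_{-\infty}^{n-1}) = 0 \}\, \P(X_n = g \mid X_{-\infty}^{n-1}),
$$
and by compatibility $\P(X_n = g \mid X_{-\infty}^{n-1}) = p(g \mid X_{-\infty}^{n-1})$ almost surely, so every summand vanishes, because the indicator forces the factor $p(g \mid X_{-\infty}^{n-1})$ to be $0$. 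Taking expectations yields $\P(p(X_n \mid X_{-\infty}^{n-1}) = 0) = 0$.

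Next I would take a countable intersection. Because $\Z$ is countable, there is an almost sure event $\Omega_0$ on which $p(X_j \mid X_{-\infty}^{j-1}) > 0$ simultaneously for all $j \in \Z$, and the rest of the argument is deterministic on $\Omega_0$. Fixing $n \in \Z$ and $m \geq 1$, I observe that in the definition of $\H$ the supremum ranges over all pasts $b \in \A^{\N}$, so in particular it dominates the value obtained by plugging in the realized past $X_{-\infty}^{n-m}$, which is itself an element of $\A^{\N}$. Hence
$$
\sup_{b} p(X_{n-m+1}^{n} \mid b) \;\geq\; p\big(X_{n-m+1}^{n} \,\big|\, X_{-\infty}^{n-m}\big) \;=\; \prod_{j=n-m+1}^{n} p\big(X_j \,\big|\, X_{-\infty}^{j-1}\big),
$$
where the last equality is just the iterated application of the chain-rule definition of the extended kernel to the realized symbols. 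On $\Omega_0$ each factor is strictly positive, hence so is the product; since $m \geq 1$ was arbitrary, the admissibility condition holds for $X_{-\infty}^{n}$, and since $n$ was arbitrary, $X_{-\infty}^{n} \in \H$ for all $n$ on $\Omega_0$. As $\P(\Omega_0) = 1$, the proposition follows.

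The only point requiring care is the treatment of the supremum over pasts in the definition of $\H$: the condition is a statement about the existence of \emph{some} past making each finite substring possible, and the natural candidate is the realized past of the trajectory itself, legitimate precisely because the supremum ranges over the full set $\A^{\N}$. Everything else is bookkeeping: the chain-rule factorization is a deterministic identity for $p$ that remains valid after substituting the random symbols, and the passage from ``for each fixed $n$'' to ``for all $n$ simultaneously'' is a countable union, harmless because $\Z$ is countable. I do not expect the measurability of the random null sets to cause difficulty, again thanks to countability of $\A$, which lets me replace sums over random subsets of $\A$ by sums over all of $\A$ weighted by indicators.
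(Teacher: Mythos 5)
Your proof is correct and takes essentially the same route as the paper's: both arguments hinge on the almost-sure positivity of the realized one-step probabilities $p(X_j \mid X_{-\infty}^{j-1})$, established by the identical indicator-times-kernel computation (an indicator of $\{p=0\}$ multiplied by $p$ itself vanishes), and both then use the chain-rule factorization together with the realized past as the witness for the supremum in the definition of $\H$. The only difference is organizational, not mathematical: you argue directly, proving the one-step statement and taking a countable intersection over $j \in \Z$, whereas the paper phrases the same content as a proof by contradiction and iterates the conditioning over $j = n, n-1, \ldots, m$.
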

\begin{proof}
Suppose that $(X_n)_{n\in \Z}$ is compatible with $p.$ For any $ m, n \in \Z,$ with $m\leq n$, put $E_m^n :=\{a_{-\infty}^{-1} X_{m}^{n} \not\in \H, \text{ for all }a_{-\infty}^{-1}\in \H\} .$ 
Assume that there exists $m,n\in \Z$ such that $\P (E_m^n) > 0. $ On $E_m^n, $ there exists $m\leq k= k ( \omega ) \leq n$ such that $p(X_k|X_{-\infty}^{k-1})=0.$ This implies that 
\begin{equation}\label{eq:firstfact}
p( X_m^{n}| X_{- \infty}^{m-1} ) = 0 \mbox{ on } E_m^n.
\end{equation} 
However, conditioning on $\sigma \{ X_k, k \le n-1 \},$  we have that 
\begin{multline*}
\P ( p( X_m^{n}| X_{- \infty}^{m-1} ) > 0 ) = \sum_{x_n \in \A} \E [ 1_{\{ p ( x_n |X_{- \infty}^{n-1} ) > 0 \}} 1_{\{ X_n = x_n \}}\prod_{j=m}^{n-1} 1_{\{ p ( X_j |X_{- \infty}^{j-1} ) > 0 \}}  ]  \\
=\sum_{x_n \in \A} \E [1_{\{ p ( x_n |X_{- \infty}^{n-1} ) > 0 \}}  p(x_n|X_{- \infty}^{n-1}) \prod_{j=m}^{n-1} 1_{\{ p ( X_j |X_{- \infty}^{j-1} ) > 0 \}}  ]  \\
= \E [ \sum_{x_n \in \A} p(x_n|X_{- \infty}^{n-1}) \prod_{j=m}^{n-1} 1_{\{ p ( X_j |X_{- \infty}^{j-1} ) > 0 \}}  ]  =  \E [  \prod_{j=m}^{n-1} 1_{\{ p ( X_j |X_{- \infty}^{j-1} ) > 0 \}}  ]  .
\end{multline*}
Iterating this argument, by conditioning next on $\sigma \{ X_k, k \le n-2 \},$ then on $\sigma \{ X_k, k \le n-3 \},$ and so on, and finally on $ \sigma \{ X_k, k \le m-1 \} , $
we see that 
$$ \P ( p( X_m^{n}| X_{- \infty}^{m-1} ) > 0 ) = 1 , $$ that is, almost surely, $p( X_m^{n}| X_{- \infty}^{m-1} ) > 0 ,$ which is in contradiction with (\ref{eq:firstfact}).    
\end{proof}

\begin{remark}
If we know that there exists a set $\tilde{\H}\subset \H$ such that if $(X_n)_{n\in \Z}$ is compatible with $p,$ then $\P(X_{-\infty}^{n} \in \tilde{\H}, \text{ for all } n\in \Z)=1$, then we can replace $\H$ by $\tilde{\H}$ in the sequel.
\end{remark}

The aim of this article is to construct perfect simulation algorithms to simulate $X_{-m}^{0}$ for a fixed $m\geq 0,$ where $(X_n)_{n \in \Z}$ is compatible with the probability kernel $p.$ We propose two algorithms to achieve this goal. The first algorithm, Algorithm \ref{algo:ps}, applies for kernels satisfying 
\begin{equation}\label{def:alpha}
\beta:=\sum_{g\in \A}\inf_{x^{-1}_{-\infty}\in \H}p(g|x^{-1}_{-\infty})>0.
\end{equation}
The second algorithm, Algorithm \ref{algo:ps2}, deals with kernels without this condition.
Before presenting the algorithms and summarizing their properties, we have to introduce the following notation.

For any $g\in \A$, let
$$
\alpha(g):=\inf_{a_{-\infty}^{-1} \in \H}p(g|a_{-\infty}^{-1}).
$$
Note that $\beta=\sum_{g\in \A}\alpha(g)$.
Moreover, for any $n\geq 1$, $g\in \A$ and $b_{-n}^{-1}\in \A^{n}$, let
\begin{equation}\label{def:alphaparticular}
\alpha(g|b_{-n}^{-1})\mydef\inf\{p(g|a_{-\infty}^{-1}): a_{-\infty}^{-1} \in \H, a_{-j}=b_{-j}, j=1,\ldots,n\} ,
\end{equation}
where we put $ \inf \emptyset := 0,$ 
and 
\begin{equation}\label{eq:beta}
\beta( b_{-n}^{-1})\mydef\sum_{g\in \A}\alpha(g|b_{-n}^{-1}).
\end{equation}

In (\ref{def:alphaparticular}) we take the infimum over the set of all histories that have the first $n$ symbols in the past fixed. We extend this notation to the case where the fixed part of the history is not necessarily given by the first symbols. To do so, we adjoin to our state space an extra symbol ``$*$'' that denotes ``unknown states'' and put $\A_*=\A\cup\{*\}$. 

For any $n\geq 1$, $g\in \A$ and $b_{-n}^{-1}\in \A_*^{n}$, let
\begin{equation}\label{def:alphageneral}
\alpha(g|b_{-n}^{-1})\mydef\inf\{p(g|a_{-\infty}^{-1}): a_{-\infty}^{-1} \in \H, a_{-j}=b_{-j}\;   \forall  j \in \{1, \ldots,n\} \mbox{ with }  b_{-j} \in \A\}
\end{equation}
and 
\begin{equation}\label{def:beta}
\beta(b_{-n}^{-1})\mydef\sum_{g\in \A}\alpha(g|b_{-n}^{-1}).
\end{equation}
Finally, let $\alpha(*)\mydef 1-\beta$ and $\alpha(*|b_{-n}^{-1}) \mydef 1-\beta( b_{-n}^{-1})$. If $ m > 0$, then $b_{z+m}^{z}$ is the empty string, and in this case we use the conventions $\alpha(g|b_{z+m}^{z})=\alpha(g)$ and $\beta(b_{z+m}^{z})=\beta$.

Note that \eqref{def:alphaparticular} is a particular case of \eqref{def:alphageneral} and is defined to improve the readability of the text.

\begin{remark}\label{rem:monotone} For any $b_{-\infty}^{-1} \in \A_*^{\N}$, it follows that for any $m>n\geq 0$ and for any $g\in \A$,
$$
\alpha(g|b_{-m}^{-1})\geq \alpha(g|b_{-n}^{-1}). 
$$
Moreover, if $\tilde{b}_{-\infty}^{-1} \in \A_*^{\N}$ satisfies $\tilde{b}_{-k} \in \{*,b_{-k}\}$, for any $k\geq 1$, then it follows that for any $m\geq 1$ and for any $g\in \A$,
$$
\alpha(g|b_{-m}^{-1})\geq \alpha(g|\tilde{b}_{-m}^{-1}).
$$
Note also that $\alpha(g|b^{-1}_{-n}*)=\alpha(g|b^{-1}_{-n})$, for any $g\in \A$ and $b^{-1}_{-n} \in \A_*^n$. In particular, $\alpha(g|*)=\alpha(g)$.
\end{remark}

\subsection{An introductory example}

Before presenting Algorithm \ref{algo:ps},
let us consider an example that will help us understand how the algorithm works. Let $(U_n)_{n\in \Z}$ be a sequence of i.i.d. uniform random variables in $[0,1)$. 

Consider the finite alphabet $\A=\{1,\ldots,m\}$. We wish to sample $ X_0 $ where $(X_n)_{n\in \Z}$ is compatible with $p$. To do so, we first consider a partition $ (I_0 ( k), 1 \le k \le m )$ of the interval $[0,\beta)$ containing $m$ right-open intervals of size $\alpha(1),\ldots, \alpha(m)$, respectively (see Figure \ref{fig:1}). 

\begin{figure}[H]
\centering
\begin{tikzpicture}[scale=7]
\draw[->, thick] (-0.1,0) -- (1.1,0);
\foreach \x/\xtext in {0/0,0.17/$\alpha(1)$,0.37/$\alpha(2)$,0.8/$\alpha(m)$,1/$\beta$}
\draw[thick] (\x,0.5pt) -- (\x,-0.5pt);
\draw (0.575,1pt) node[above] {$\ldots$};
\draw[|-|] (0,0.07) -- node[fill=white,inner sep=0.5mm,outer sep=0.5mm] {$\alpha(1)$} (0.17,0.07);
\draw[|-|] (0.17,0.07) -- node[fill=white,inner sep=0.5mm,outer sep=0.5mm] {$\alpha(2)$} (0.37,0.07);
\draw[|-|] (0.8,0.07) -- node[fill=white,inner sep=0.5mm,outer sep=0.5mm] {$\alpha(m)$} (1,0.07);
\draw (0,-0.06) node {$0$};
\draw (1,-0.06) node {$\beta$};
\end{tikzpicture}
\caption{}
\label{fig:1}
\end{figure}

We then sample the uniform random variable $U_0$. If $U_0<\beta$, we set $X^{(0)}_0=k$ if and only if $U_0 \in I_0 (k) ,$ that is, $ U_0$ belongs to the $k$-th sub-interval of $[0,\beta)$ which has size $\alpha(k),$ $ 1 \le k \le m.$ In the contrary case, we temporarily set $X^{(0)}_0=*$, meaning that we do not yet know the value of $X_0$. 

\begin{figure}[H]
\centering
\begin{tikzpicture}[scale=7]
\draw[->, thick] (-0.1,0) -- (1.1,0);
\foreach \x/\xtext in {0/0,0.2/$\beta$,1/$1$}
    \draw[thick] (\x,0.5pt) -- (\x,-0.5pt) node[below] {\xtext};
\draw[-, ultra thick, blue] (0,0) -- (0.2,0);
\draw[thick, red] (0.3,0.5pt) -- (0.3,-0.5pt) node[below] {$U_0$};
\draw (-0.25,0) node {$X^{(0)}_0=*$};
\end{tikzpicture}
\caption{}
\label{fig:2}
\end{figure}

If $X^{(0)}_0=k, $ for some $k\in \A$, we stop the algorithm and put $ X_0 = X^{(0)}_0. $ In the contrary case, we have to continue the algorithm. In our example, $U_0>\beta$ (see Figure \ref{fig:2}), such that we have to continue the algorithm.

So we look back trying to obtain the value of $X_{-1}$. We sample the random variable $U_{-1}$. If $U_{-1}<\beta$, we set $X^{(-1)}_{-1}=k$ if and only if $U_{-1} \in I_0 (k) ,$ 
$ 1 \le k \le m$. In the contrary case, we temporarily set $X^{(-1)}_{-1}=*$ and keep the previous value of the process at time $0,$ that is, we set $ X^{(-1)}_0 = *.$ In this example $U_{-1}>\beta$
(see Figure \ref{fig:3}) such that we have to go even further back into the past. 

\begin{figure}[H]
\centering
\begin{tikzpicture}[scale=7]
\draw[->, thick] (-0.1,0) -- (1.1,0);
\foreach \x/\xtext in {0/0,0.2/$\beta$,1/$1$}
    \draw[thick] (\x,0.5pt) -- (\x,-0.5pt) node[below] {\xtext};
\draw[-, ultra thick, blue] (0,0) -- (0.2,0);
\draw[thick, red] (0.3,0.5pt) -- (0.3,-0.5pt) node[below] {$U_0$};
\draw (-0.25,0) node {$X^{(-1)}_0=*$};
\end{tikzpicture}
\begin{tikzpicture}[scale=7]
\draw[->, thick] (-0.1,0) -- (1.1,0);
\foreach \x/\xtext in {0/0,0.2/$\beta$,1/$1$}
    \draw[thick] (\x,0.5pt) -- (\x,-0.5pt) node[below] {\xtext};
\draw[-, ultra thick, blue] (0,0) -- (0.2,0);
\draw[thick, red] (0.55,0.5pt) -- (0.55,-0.5pt) node[below] {$U_{-1}$};
\draw (-0.25,0) node {$X^{(-1)}_{-1}=*$};
\end{tikzpicture}
\caption{}
\label{fig:3}
\end{figure}

Since $X^{(-1)}_{-1}$ is unknown, we look back trying to obtain the value of $X_{-2}$. In this example $U_{-2}>\beta$ and then, $X^{(-2)}_{-2}=*$. Finally, in this example $U_{-3}<\beta$ and we denote $a_{-3}$ the index of the interval to which $U_{-3}$ belongs, i.e., $U_{-3} \in I_0 ( a_{-3} ) $ 
which has size $\alpha(a_{-3})$
 (see Figure \ref{fig:4}). Note that in  Figure \ref{fig:4} the values at times $ - 2, -1 $ and $0$ are not yet updated.

\begin{figure}[H]
\centering
\begin{tikzpicture}[scale=7]
\draw[->, thick] (-0.1,0) -- (1.1,0);
\foreach \x/\xtext in {0/0,0.2/$\beta$,1/$1$}
    \draw[thick] (\x,0.5pt) -- (\x,-0.5pt) node[below] {\xtext};
\draw[-, ultra thick, blue] (0,0) -- (0.2,0);
\draw[thick, red] (0.3,0.5pt) -- (0.3,-0.5pt) node[below] {$U_0$};
\draw (-0.25,0) node {$X^{(-2)}_0=*$};
\end{tikzpicture}
\begin{tikzpicture}[scale=7]
\draw[->, thick] (-0.1,0) -- (1.1,0);
\foreach \x/\xtext in {0/0,0.2/$\beta$,1/$1$}
    \draw[thick] (\x,0.5pt) -- (\x,-0.5pt) node[below] {\xtext};
\draw[-, ultra thick, blue] (0,0) -- (0.2,0);
\draw[thick, red] (0.55,0.5pt) -- (0.55,-0.5pt) node[below] {$U_{-1}$};
\draw (-0.25,0) node {$X^{(-2)}_{-1}=*$};
\end{tikzpicture}
\begin{tikzpicture}[scale=7]
\draw[->, thick] (-0.1,0) -- (1.1,0);
\foreach \x/\xtext in {0/0,0.2/$\beta$,1/$1$}
    \draw[thick] (\x,0.5pt) -- (\x,-0.5pt) node[below] {\xtext};
\draw[-, ultra thick, blue] (0,0) -- (0.2,0);
\draw[thick, red] (0.35,0.5pt) -- (0.35,-0.5pt) node[below] {$U_{-2}$};
\draw (-0.25,0) node {$X^{(-2)}_{-2}=*$};
\end{tikzpicture}
\begin{tikzpicture}[scale=7]
\draw[->, thick] (-0.1,0) -- (1.1,0);
\foreach \x/\xtext in {0/0,0.2/$\beta$,1/$1$}
    \draw[thick] (\x,0.5pt) -- (\x,-0.5pt) node[below] {\xtext};
\draw[-, ultra thick, blue] (0,0) -- (0.2,0);
\draw[thick, red] (0.1,0.5pt) -- (0.1,-0.5pt) node[below] {$U_{-3}$};
\draw (-0.25,0) node {$X^{(-3)}_{-3}=a_{-3}$};
\end{tikzpicture}
\caption{}
\label{fig:4}
\end{figure}

As we have obtained $X^{(-3)}_{-3} \in \A$, we now sequentially check if this is enough to obtain the values of $X^{(-3)}_{-2},X^{(-3)}_{-1}$ and $X^{(-3)}_{0}$. 
For $X^{(-3)}_{-2},$ we consider a partition $ (I_1 (k, U_{-3}), 1 \le k \le m )$ of the interval $[\beta, \beta( a_{-3}))$ containing $m$ right-open intervals of size $\alpha(k|a_{-3})-\alpha(k)$, respectively.
By Remark~\ref{rem:monotone}, note that we have that $\alpha(g|b)\geq \alpha(g)$ for any $g\in \A$ and $b\in\A$. If $U_{-2}<\beta(a_{-3})$, we set $X^{(-3)}_{-2}=k$ if and only if $U_{-2} \in I_1 (k, U_{-3})$, that is, $U_{-2}$ belongs to the $k$-th sub-interval of $[\beta, \beta( a_{-3}))$, which has size $\alpha(k|a_{-3})$. In the contrary case, we keep the temporary value $X^{(-3)}_{-2}=*$. In this example, $U_{-2}<\beta(a_{-3})$ and we denote $a_{-2}$ the index of
the interval to which $U_{-2}$ belongs (i.e, $U_{-2}\in I_1 (a_{-2}, U_{-3})$). 

As we have obtained $X^{(-3)}_{-3}$ and $X^{(-3)}_{-2}$, we now  check if this is enough to obtain the value of $X^{(-3)}_{-1}$. We consider a partition $ (I_2 (k, U_{-3}^{-2}), 1 \le k \le m )$ of the interval $[\beta, \beta( a_{-3}^{-2}))$ containing $m$ right-open intervals of size $\alpha(k|a_{-3}^{-2})-\alpha(k)$, respectively.
In this example, $U_{-1}>\beta(a_{-3}^{-2})$ and then we keep $X^{(-3)}_{-1}=*$.

Finally, let us check if we have enough information to obtain the value of $X^{(-3)}_{0}$. 
We consider a partition $ (I_3 (k, U_{-3}^{-1}), 1 \le k \le m )$ of the interval $[\beta, \beta(* a_{-3}^{-2}))$ containing $m$ right-open intervals of size $\alpha(k|*a_{-3}^{-2})-\alpha(k)$, respectively.
In this example, $U_{0}<\beta(*a_{-3}^{-2})$ and then we set $X^{(-3)}_{0}=a_0$, the index of the interval to which $U_{0}$ belongs (i.e, $U_{0}\in I_3 (a_{0}, U_{-3}^{-1})$).

\begin{figure}[H]
\centering
\begin{tikzpicture}[scale=7]
\draw[->, thick] (-0.1,0) -- (1.1,0);
\foreach \x/\xtext in {0/0,0.2/$\beta$,0.5/ $\beta(*a_{-3}^{-2})$,1/$1$}
    \draw[thick] (\x,0.5pt) -- (\x,-0.5pt) node[below] {\xtext};
\draw[-, ultra thick, blue] (0,0) -- (0.2,0);
\draw[-, ultra thick, yellow] (0.2,0) -- (0.5,0);
\draw[thick, red] (0.3,0.5pt) -- (0.3,-0.5pt) node[below] {$U_0$};
\draw (-0.25,0) node {$X^{(-3)}_0=a_0$};
\end{tikzpicture}
\begin{tikzpicture}[scale=7]
\draw[->, thick] (-0.1,0) -- (1.1,0);
\foreach \x/\xtext in {0/0,0.2/$\beta$,0.35/$\beta( a^{-2}_{-3})$,1/$1$}
    \draw[thick] (\x,0.5pt) -- (\x,-0.5pt) node[below] {\xtext};
\draw[-, ultra thick, blue] (0,0) -- (0.2,0);
\draw[-, ultra thick, yellow] (0.2,0) -- (0.35,0);
\draw[thick, red] (0.55,0.5pt) -- (0.55,-0.5pt) node[below] {$U_{-1}$};
\draw (-0.25,0) node {$X^{(-3)}_{-1}=*$};
\end{tikzpicture}
\begin{tikzpicture}[scale=7]
\draw[->, thick] (-0.1,0) -- (1.1,0);
\foreach \x/\xtext in {0/0,0.2/$\beta$,0.6/$\beta(a_{-3})$,1/$1$}
    \draw[thick] (\x,0.5pt) -- (\x,-0.5pt) node[below] {\xtext};
\draw[-, ultra thick, blue] (0,0) -- (0.2,0);
\draw[-, ultra thick, yellow] (0.2,0) -- (0.6,0);
\draw[thick, red] (0.35,0.5pt) -- (0.35,-0.5pt) node[below] {$U_{-2}$};
\draw (-0.25,0) node {$X^{(-3)}_{-2}=a_{-2}$};
\end{tikzpicture}
\begin{tikzpicture}[scale=7]
\draw[->, thick] (-0.1,0) -- (1.1,0);
\foreach \x/\xtext in {0/0,0.2/$\beta$,1/$1$}
    \draw[thick] (\x,0.5pt) -- (\x,-0.5pt) node[below] {\xtext};
\draw[-, ultra thick, blue] (0,0) -- (0.2,0);
\draw[thick, red] (0.1,0.5pt) -- (0.1,-0.5pt) node[below] {$U_{-3}$};
\draw (-0.25,0) node {$X^{(-3)}_{-3}=a_{-3}$};
\end{tikzpicture}
\label{fig:5}
\end{figure}
Since we have finally obtained a non-star value for the process at time $0, $ we can stop the algorithm at this point.

\subsection{A perfect simulation algorithm with spontaneous symbols}

In the following $(U_n)_{n\in \Z}$ are i.i.d. uniform random variables in $[0,1)$.
Consider a countable alphabet $\A$, a set of admissible histories $\H \subset \A^{\N}$ associated to a probability kernel $p:\A\times \H \to [0,1]$. Assume that the kernel $p$ satisfies $\beta>0$.
The procedure  we propose to obtain a sample $X_{n-k},\ldots, X_n$ for some fixed $ k\geq0$ of a chain compatible with $p$ is given by Algorithm \ref{algo:ps}, and it is described below.

In what follows, for any $ n, $ we will introduce recursively a sequence of temporary values $X^{{(n-j)}}_{n}$, where the iteration is with respect to $j\geq 0, $ the number of steps of the algorithm. More precisely, $X^{{(n-j)}}_{n}$ is the temporary value of $X_n$ based only on $U_{n-j}^{n}$. The exponent $ (n-j)$ indicates up to which past time we have to go back in the past when considering the past values of the sequence $U_{-\infty}^n $ before time $n.$ Notice that this involves $j+1$ steps of the algorithm.

\begin{definition}  \label{def:algo1}
For any $n\in \Z$, we define
$$
X_n^{(n)}\mydef \sum_{g\in \A}g\mathbf{1}\left\{U_n \in \left[\sum_{b<g}\alpha(b),\sum_{b\leq g}\alpha(b)\right)\right\}+*\mathbf{1}\left\{U_n\geq \beta\right\}.
$$
With the convention $\inf\emptyset=*$, we have that
$$
X_{n}^{(n)} = \inf\left\{g\in \A:U_{n} < \sum_{b\leq g}\alpha(b)\right\}.
$$
With this convention, for any $m\in \Z$ and $n<m$, we define $X_{m}^{(n)}=X_{m}^{(n+1)}$ if $X_{m}^{(n+1)} \in \A$, and
$$
X_{m}^{(n)} := \inf\left\{g\in \A :U_{m} < \beta((X^{(n+1)})^{m-1}_{n+1})+\sum_{b\leq g}\alpha(b|(X^{(n)})^{m-1}_{n})-\alpha(b|(X^{(n+1)})^{m-1}_{n+1})\right\},
$$
in the contrary case. Notice that in the above recursion, $(X^{(n)})^{m-1}_{n}$ has already been attributed a value. Finally, we recall that by convention, $ \beta((X^{(n+1)})^{m-1}_{n+1}) = \beta $ in the case $ n= m-1.$ 

\end{definition}

Let us explain the above definitions. Suppose for simplicity that we only want to simulate a single value $X_n$ for a fixed time $n\in \Z$.
We first obtain $X_n^{(n)}$, which is the temporary value of $X_n$ based only on $U_n$.
If $X_{n}^{(n)}\in \A$, then we can stop and the algorithm returns $X_n=X_{n}^{(n)}$. If not, we obtain $X_{n-1}^{(n-1)}$, the temporary value of $X_{n-1}$ based only on $U_{n-1}$, and $X_n^{(n-1)}$, the temporary value of $X_{n}$ based only on $U_{n-1}$ and $U_{n}$. 
If $X_{n}^{(n-1)}\in \A$, the algorithm returns $X_n=X_{n}^{(n-1)}$. If not, we obtain $X_{n-2}^{(n-2)}$, $X_{n-1}^{(n-2)}$ and $X_{n}^{(n-2)}$ and so on. 

In Algorithm \ref{algo:ps} we keep the convention $\inf\emptyset=*$.

\begin{algorithm}[H]
\caption{PERFECT SIMULATION ALGORITHM TO SAMPLE $X_{-k}, \ldots,X_{0}$}
\label{algo:ps}
\begin{algorithmic}
\State Inputs:  $k \in \N$. Outputs: $X_{-j}$, for all $j={-k,\ldots,0}$.
\State Initialization: $X_{-j}^{(-n)}\leftarrow*$, for all $j={0,\ldots, k}$ and $n=j-1, j-2, \ldots, -1$.
\State $n\leftarrow -1$
\While{$X_{-j}^{(-n)} =*$, for any $j={0,\ldots,k}$}
\State $n\leftarrow n+1$
\State Draw the random variable $U_{-n}$.
\State $X_{-n}^{(-n)} \leftarrow \displaystyle \inf\left\{g\in \A:U_{-n} <\sum_{b\leq g}\alpha(b)\right\}$ 
\For{$m=n-1, n-2, \ldots, 0$}
\If{$X_{-m}^{(-n+1)} =*$ and $X_{-n}^{(-n)} \neq *$}
\State $
X_{-m}^{(-n)} \leftarrow$ \\ $\hspace{0.8cm}\displaystyle\inf\left\{g:U_{-m} < \beta((X^{(-n+1)})^{-m-1}_{-n+1})+\sum_{b\leq g}\alpha(b|(X^{(-n)})^{-m-1}_{-n})-\alpha(b|(X^{(-n+1)})^{-m-1}_{-n+1})\right\}  
$

\Else
\State $X_{-m}^{(-n)}=X_{-m}^{(-n+1)}$.
\EndIf
\EndFor
\EndWhile
\State \Return{$X_{-k}^{(-n)},\ldots,X_{0}^{(-n)}$}.
\end{algorithmic}
\end{algorithm}

\begin{definition}\label{def:T}
For any $n\in \Z$, let
\begin{equation}\label{eq:Tn}
T[n]:=\sup\{j\leq n: X_n^{(j)}\neq *\}.
\end{equation}
For any $m,n\in \Z$ such that $m<n$, let
$$
T[m,n]\mydef\inf\{T[k],k=m,\ldots,n\} ,
$$
and for any $ m \in \Z,$ let
$$
T[m, \infty)  \mydef\inf\{T[k],k \geq m \}.
$$  
\end{definition}

Note that $-T[n]$ (recall \eqref{eq:Tn}) is a stopping time with respect to the filtration $ ({\mathcal F}_k^{(n)})_{k \geq 0}, $ where for any $ k \geq 0,$ ${\mathcal F}_k^{(n)} = \sigma \{ U_{n-k}, \ldots, U_{n-1}, U_n \} . $ For any $n\in \Z$, on the event $\{|T[n]|<\infty\}$, the value
\begin{equation}
X_n=X_n^{(T[n])}
\end{equation}
is exactly the value that Algorithm \ref{algo:ps} returns. Note that $X_n$ does not depend on past values of the uniform random variables before time $ T[n].$ Finally, note that
$$
X_{m}^{(m-k)}=
\begin{cases}
*, &\text{ if } m-k>T[m],\\
X_m, &\text{ if } m-k\leq T[m].
\end{cases}
$$ 
In Subsection \ref{sec:results} we will give conditions to guarantee that $|T[n]|<\infty$, a.s.

\subsection{A perfect simulation algorithm without spontaneous symbols}

The assumption $\beta>0$ in Algorithm \ref{algo:ps} makes the spontaneous occurrence of symbols possible. Each obtained spontaneous symbol works as a source for updating the following symbols. Without assuming $\beta>0$, we need a different source in order to have a perfect simulation algorithm. In the following, to obtain a new source, we will follow the original idea proposed by \cite{cftp}. This means that we will simultaneously construct the process starting from all possible past strings of a certain length. When all these simultaneous processes hit a single symbol, this symbol will be the value of the original process that we want to simulate at that time. The symbols generated when the processes simultaneously hit a symbol are the source we need to replicate the procedure introduced by Algorithm \ref{algo:ps}. In the following, we propose an algorithm without the assumption $\beta>0$ and with the mechanism described above. Let us consider some initial definitions.

\begin{definition} \label{def:ptildeandpmarkov}
Given a probability kernel $p$, for any $n\geq 1$ we define 
$$
\beta_n  := \inf\{ \beta( a_{- n}^{-1}):  a_{-\infty}^{-1} \in \H\}.
$$
For any $n\geq 1$ satisfying $\beta_n >0 $ we define 
the transition matrix 
$$
p_{Markov}^{[n]}:\A\times\{a_{-n}^{-1}\in\A^n: a_{-\infty}^{-1}\in \H \}\to [0,1)
$$
by
$$
p_{Markov}^{[n]}(g|b_{-n}^{-1})=\frac{\alpha(g|b_{-n}^{-1})}{\beta(b_{-n}^{-1})}. $$
\end{definition}

The procedure  we propose to obtain a sample $X_{n-k},\ldots, X_n$ for some fixed $ k\geq0$ of a chain compatible with $p$ without the condition $\beta>0$ is given by Algorithm \ref{algo:ps2}. For this algorithm, we consider the following assumption. Let us introduce the notation
\begin{multline*}
 \hat n :=
\inf\{ n\in \N: \beta_n >0  \text{ and only one of the communicating classes associated to  } p^{[n]}_{Markov} \\
\text{ is closed, and this class is aperiodic}\},
\end{multline*}
where we put $\inf\emptyset=\infty$.

\begin{assumption} \label{as:algo2}
The alphabet $\A$ is finite. The probability kernel $p$ satisfies $\beta=0$ and  $\hat n<\infty$.

\end{assumption}

Let us denote $\mathcal{U}$ the unique closed class associated to $p^{[\hat{n}]}_{Markov}$. Recall that we intend to simultaneously construct the process
starting from all possible past strings of a certain length. This length will be $\hat{n}$. 
Let 
$$
\mathcal{C}:=\{a_{- \hat n}^{-1} \in \A^{\hat n}:  a_{-\infty }^{-1}\in \H \}
$$
and  $(U_n^{a_{- \hat n}^{-1}}:n\geq0,a_{- \hat n}^{-1}\in \mathcal{C})$ be i.i.d. uniform random variables in $[0,1)$. For any $n\geq 0$ and $a_{- \hat n}^{-1}\in \mathcal{C}$, define the process $(Y_n^{a_{-\hat n}^{-1}})_{n\geq 0}$ by
\begin{equation} \label{def:Ynprocess}
Y_n^{a_{-\hat{n}}^{-1}}=\inf\left\{g\in \A: U_n^{a_{- \hat n}^{-1}} <  \sum_{b\leq g}\alpha(b|(Y^{a_{-\hat{n}}^{-1}})_0^{n-1}a_{-\hat{n}}^{-1})\right\}, \mbox{ with the convention }\inf\emptyset=* .
\end{equation}
This process is a forward in time version of the update procedure proposed by Algorithm \ref{algo:ps} where we consider the fixed past $(Y^{a_{-\hat{n}}^{-1}})^{-1}_{-\hat{n}}=a_{-\hat{n}}^{-1}$. The collection of processes $(Y_n)_{n\geq 0}=((Y_n^{a_{-\hat n}^{-1}}:a_{-\hat n}^{-1}\in \mathcal{C}))_{n\geq 0}$ provides a coupled construction of the $|\mathcal{C}|$ processes starting from all possible initial pasts in $\mathcal{C}$. This coupled construction lets the different trajectories move independently.


For the next proposition, consider the following definition
$$
n_0:=\inf\left\{m\geq \hat{n}: \P\left((Y^{a_{-\hat{n}}^{-1}})^m_{m-\hat{n}+1}= b_{-\hat{n}}^{-1}, \text{ for all } a_{-\hat{n}}^{-1} \in \mathcal{C}\right)>0, \text{ for all }b_{-\hat{n}}^{-1}\in \mathcal{U} \right\}.
$$

\begin{proposition}
Suppose that Assumption \ref{as:algo2} holds. 
Then, $n_0<\infty.$
\end{proposition}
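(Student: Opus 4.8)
The plan is to connect the coupled forward processes $(Y^{a_{-\hat n}^{-1}})_{a_{-\hat n}^{-1}\in\mathcal{C}}$ to the finite Markov chain $p^{[\hat n]}_{Markov}$, and then to read off the desired coalescence from the communicating-class structure of the latter together with the mutual independence of the $Y$-processes. Concretely, for each starting past $a_{-\hat n}^{-1}$ I would follow the window $W_n^{a_{-\hat n}^{-1}} := (Y^{a_{-\hat n}^{-1}})_{n-\hat n}^{n-1}$, with $W_0^{a_{-\hat n}^{-1}}=a_{-\hat n}^{-1}$, and show that $W_n$ can trace out any positive-probability path of $p^{[\hat n]}_{Markov}$; since $\mathcal{U}$ is the unique closed, aperiodic class, all windows can be driven to any prescribed $b_{-\hat n}^{-1}\in\mathcal{U}$ at a common time, which, by independence, happens simultaneously with positive probability.

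The technical bridge, and the step I expect to be the main obstacle, is that the dynamics in \eqref{def:Ynprocess} use the full past $(Y^{a_{-\hat n}^{-1}})_0^{n-1}a_{-\hat n}^{-1}$ rather than only the last $\hat n$ symbols, whereas $p^{[\hat n]}_{Markov}$ sees length-$\hat n$ strings. I would resolve this using the monotonicity in Remark~\ref{rem:monotone}: the conditional law of $Y_n$ assigns to a symbol $g$ the mass $\alpha(g\mid (Y)_0^{n-1}a_{-\hat n}^{-1})\geq \alpha(g\mid W_n)$, since conditioning on a longer prefix can only increase $\alpha$. Hence every transition feasible for $p^{[\hat n]}_{Markov}$ (that is, with $\alpha(g\mid W_n)>0$) is realized by the $Y$-process with strictly positive conditional probability. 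Choosing at each step the specific non-$*$ symbol dictated by a Markov path therefore has positive probability, and by the add-invariance of $\H$ the resulting window stays in $\mathcal{C}$ throughout. One should argue here at the level of individual sample paths, not laws: conditioning the $Y$-dynamics on never producing $*$ reweights trajectories by path-dependent factors and does not reproduce $p^{[\hat n]}_{Markov}$ as a distribution, but it does preserve exactly which paths are feasible, which is all that positivity requires.

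Next I would invoke finite Markov chain theory for $p^{[\hat n]}_{Markov}$ on the finite state space $\mathcal{C}$ (finite because $\A$ is finite under Assumption~\ref{as:algo2}). Since $\mathcal{U}$ is the unique closed communicating class, every state of $\mathcal{C}$ is either in $\mathcal{U}$ or transient and leads into $\mathcal{U}$ by a positive-probability path of some length $k_{a_{-\hat n}^{-1}}$; and since $\mathcal{U}$ is irreducible on itself and aperiodic, there is $N$ with $(p^{[\hat n]}_{Markov})^{m}(u,b_{-\hat n}^{-1})>0$ for all $u,b_{-\hat n}^{-1}\in\mathcal{U}$ and all $m\geq N$. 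Concatenating a path from $a_{-\hat n}^{-1}$ into $\mathcal{U}$ with a path inside $\mathcal{U}$ and setting $m_0:=\max_{a_{-\hat n}^{-1}\in\mathcal{C}} k_{a_{-\hat n}^{-1}}+N$ (finite, as $\mathcal{C}$ is finite, and which may be taken larger than $\hat n$), I obtain $(p^{[\hat n]}_{Markov})^{m_0}(a_{-\hat n}^{-1},b_{-\hat n}^{-1})>0$ for every $a_{-\hat n}^{-1}\in\mathcal{C}$ and every $b_{-\hat n}^{-1}\in\mathcal{U}$ simultaneously.

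Finally I would assemble the pieces. By the bridge of the second paragraph, positivity of $(p^{[\hat n]}_{Markov})^{m_0}(a_{-\hat n}^{-1},b_{-\hat n}^{-1})$ gives $\P\big((Y^{a_{-\hat n}^{-1}})_{m_0-\hat n}^{m_0-1}=b_{-\hat n}^{-1}\big)>0$ for each fixed $a_{-\hat n}^{-1}$. The driving families $(U_n^{a_{-\hat n}^{-1}})_n$ are independent across $a_{-\hat n}^{-1}\in\mathcal{C}$, so the processes $(Y^{a_{-\hat n}^{-1}})$ are independent, and the event that all of them occupy the window $b_{-\hat n}^{-1}$ at this common time is a finite intersection of independent positive-probability events, hence has positive probability, for every $b_{-\hat n}^{-1}\in\mathcal{U}$. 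Matching the time index with the window $(Y)_{m-\hat n+1}^{m}$ in the definition of $n_0$ (take $m=m_0-1\geq\hat n$) then yields $n_0\leq m_0-1<\infty$.
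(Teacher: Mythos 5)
Your proof is correct and follows essentially the same route as the paper's: both transfer the behaviour of the coupled processes $(Y^{a_{-\hat n}^{-1}})$ to the finite Markov chain $p^{[\hat n]}_{Markov}$ (using $\beta_{\hat n}>0$ together with the monotonicity $\alpha(g\mid \text{full past})\geq \alpha(g\mid \text{last } \hat n \text{ symbols})$), and then combine the unique closed aperiodic class with the independence of the driving uniforms across initial strings to obtain simultaneous hitting of any $b_{-\hat n}^{-1}\in\mathcal{U}$ at a common time. The only difference is presentational: the paper phrases the transfer as ``positive probability of following the $\hat n$-Markovian regime'' and cites a textbook coalescence result for independent Markov chains, whereas you argue at the level of path feasibility and construct the common time $m_0$ explicitly; both are sound.
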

\begin{proof} 
By Assumption \ref{as:algo2}, we have that $\beta_{\hat n} >0$. Therefore, the probability that $(Y_n^{a_{-\hat n}^{-1}})_{n\geq 0}$ follows the $\hat n$-Markovian regime for all $a_{-\hat n}^{-1}\in \mathcal{C}$ and for all $n=0,1,\ldots, m$ is positive, for each fixed $m\geq 0$. 

By assumption, only one of the communication classes associated to the transition kernel $p^{[\hat n]}_{Markov}$ is closed, and this class is also aperiodic. This implies the following: If we consider independent copies of the Markov chain with transition matrix $p^{[\hat n]}_{Markov},$ starting from all possible values in $\mathcal{C}$, then there is a positive probability that after some time $m,$ all these Markov chains visit the same state $b_{-\hat n}^{-1}\in \mathcal{U}$ - a string of length $\hat n$ - at the same time (Chapter 11.4 of \cite{bremaud}). This property also holds for the original process following the kernel $p$, since it has a positive probability of following the $\hat n$-Markovian regime. This finishes the proof.  
\end{proof}

We now describe our algorithm. As before, let $(U_n^{a_{- \hat n}^{-1}}:n\geq0,a_{- \hat n}^{-1}\in \mathcal{C})$  be i.i.d. uniform random variables in $[0,1)$.
In the procedure proposed by Algorithm \ref{algo:ps2} we will use time windows $ I_j = \{ ( j-1) n_0 + 1 , \ldots,  j n_0 \} , j \in \Z. $  
\begin{enumerate}
    \item The algorithm considers sequentially the time windows of length $n_0$, starting from the window $I_0 .$ We consider the coupled construction defined in \eqref{def:Ynprocess} with all  possible initial states in $\mathcal{C}$. When these coupled chains hit simultaneously a single symbol which does not equal the star symbol, then the value of the process is the value that was hit simultaneously. 
    \item After considering the procedure above during the time window $I_{-k}, $ for a given $k\geq 1$, we have to update the value of the process for the times belonging to $ I_{-j}, $  $j=k-1,\ldots, 1$. For $n \in I_{-j}, $ the update for the value $X_{n}$ occurs only if the whole string $X_{-jn_0-\hat{n}}^{-jn_0}$ is known. These successive updates follow the ideas of the construction proposed in Algorithm \ref{algo:ps}. 
\end{enumerate}

Let us now describe the algorithm to simulate a single value of the process at a fixed time, say $X_0$. In this description, we will use the convention $\inf\emptyset=*$.
We first obtain $X_{m}^{(0)}$ for all $m\in I_0$, which are the temporary values of $X_{m}$, for $m\in I_0$. These values are constructed using the random variables $(U_{m}^{a_{-\hat{n}}^{-1}}: m \in I_0, a_{-\hat{n}}^{-1} \in \mathcal{C})$. To do this,  we consider for  $a_{-\hat{n}}^{-1} \in \mathcal{C}$ and $m \in I_0$,
$$
X^{a_{-\hat{n}}^{-1}}_{m} =\inf\left\{g\in \A:U_{m}^{a_{-\hat{n}}^{-1}} <\sum_{b\leq g}\alpha(b|(X^{ a_{-\hat{n}}^{-1}})_{-n_0+1}^{m-1}a_{-\hat{n}}^{-1})\right\}.
$$
Notice that this prescription corresponds to a backwards in time version of (\ref{def:Ynprocess}), in the spirit of the Propp and Wilson algorithm. 

For any $a_{-\hat{n}}^{-1} \in \mathcal{C}$, to decide about $X^{a_{-\hat{n}}^{-1}}_{m},$ for any $ m \in I_0, $ we consider the fixed past $X_{-n_0-\hat{n}+1}^{-n_0}=a_{-\hat{n}}^{-1}$ to update the following values using the procedure proposed by Algorithm \ref{algo:ps}. The values of $X^{a_{-\hat{n}}^{-1}}_{m}$ and $X^{b_{-\hat{n}}^{-1}}_{m},$ for $m\in I_0$ and $a_{-\hat{n}}^{-1} \neq b_{-\hat{n}}^{-1}$ are independent.  For $m \in I_0,$ 
if $X^{a_{-\hat{n}}^{-1}}_{m}=g$ for all $a_{-\hat{n}}^{-1} \in \mathcal{C}$ and for some $g\in \A$, we set $X_{m}^{(0)}=g$. 
In other words, we obtain the value of the chain when the trajectories starting from all possible past strings visit the same point. If this is not the case, we set $X_{m}^{(0)}=*$. Finally, 
if $X_{0}^{(0)}\in \A$, the algorithm returns $X_0=X_{0}^{(0)}$. This is the first round of the algorithm. 

If during the first round of the algorithm we were not able to find a value for $X_0,$ we go further back to the past and inspect the next time window $I_{-1}$ and obtain $X_{m}^{(-1)}, m \in I_{-1} ,$  the temporary values of $X_{m}, m \in I_{-1}. $ These values depend on $U_{m}^{a_{-\hat{n}}^{-1}}, m \in I_{-1},  a_{-\hat{n}}^{-1} \in \mathcal{C}$. To do this,  we consider for  $a_{-\hat{n}}^{-1} \in \mathcal{C}$ and $m\in I_{-1}$,
$$
X^{a_{-\hat{n}}^{-1}}_{m} =\inf\left\{g\in \A:U_{m}^{a_{-\hat{n}}^{-1}} <\sum_{b\leq g}\alpha(b|(X^{ a_{-\hat{n}}^{-1}})_{-2n_0+1}^{m-1}a_{-\hat{n}}^{-1})\right\}.
$$
Here, for any $a_{-\hat{n}}^{-1} \in \mathcal{C}$, to decide about $X^{a_{-\hat{n}}^{-1}}_{m}, $ for any $ m\in I_{-1}, $ we consider the fixed past $X_{-2n_0-\hat{n}+1}^{-2n_0}=a_{-\hat{n}}^{-1}$ to update the following values using the procedure proposed by Algorithm \ref{algo:ps}. For $m \in I_{-1}$, if $X^{a_{-\hat{n}}^{-1}}_{m}=g$ for all $a_{-\hat{n}}^{-1} \in \mathcal{C}$ and for some $g\in \A$, we set $X_{m}^{(-1)}=g$. If not, we set $X_{m}^{(-1)}=*$. The superscript of $ X_m^{(-1)} $ indicates that we had to look back into the past up to the time window $ I_{-1}.$ 

Now, it is time to obtain $X_{m}^{(-1)},m\in I_0$. We first check if $(X^{(-1)})_{-n_0-\hat{n}+1}^{-n_0}=b_{-\hat{n}}^{-1}$, for some $ b_{-\hat{n}}^{-1} \in \mathcal{C}$ (in particular, there is no $*$ symbol in this sequence). If not, we set $X_{m}^{(-1)}=X_{m}^{(0)}$ for all $m \in I_0$. In the positive case, for $m \in I_0, $ if $X_{m}^{(0)}\neq *$ we set $X_{m}^{(-1)}=X_{m}^{(0)}$ and if $X_{m}^{(0)}= *$, we set $X_{m}^{(-1)}$ equal to 
$$
\inf\left\{g:U_{m}^{b_{-\hat{n}}^{-1}} < \beta((X^{b_{-\hat{n}}^{-1}})^{m-1}_{-n_0+1}b_{-\hat{n}}^{-1})+\sum_{b\leq g}\alpha(b|(X^{(-1)})^{m-1}_{-2n_0+1})-\alpha(b|(X^{b_{-\hat{n}}^{-1}})^{m-1}_{-n_0+1}b_{-\hat{n}}^{-1})\right\}.
$$
This means that we update the values of the process associated to the past string of length $\hat{n}$ that we have obtained, which is $(X^{(-1)})_{-n_0-\hat{n}+1}^{-n_0}=b_{-\hat{n}}^{-1}$. Note that in this update we use the uniform random variable $U_{m}^{b_{-\hat{n}}^{-1}}$, which is associated to the past we obtained. Finally, 
if $X_{0}^{(-1)}\in \A$, the algorithm returns $X_0=X_{0}^{(-1)}$, and so on. 

To have a complete description of the algorithm, we need to explain how we proceed in case we need to go one step further in the past. The construction of $X_m^{(-2)}, m\in I_{-2} \cup I_{-1}$ is analogous to the construction described above, the difference appears in the construction of $X_{m}^{(-2)},m\in I_0$, as we explain in the following. 
We first check if $(X^{(-2)})_{-n_0-\hat{n}+1}^{-n_0}=b_{-\hat{n}}^{-1}$, for some $ b_{-\hat{n}}^{-1} \in \mathcal{C}$.  If not, we set $X_{m}^{(-2)}=X_{m}^{(-1)}$ for all $m \in I_0$. In the positive case, for $m \in I_0, $ if $X_{m}^{(-1)}\neq *$ we set $X_{m}^{(-2)}=X_{m}^{(-1)}$. If $X_{m}^{(-1)}= *$, we now have two possibilities. If $(X^{(-1)})_{-n_0-\hat{n}+1}^{-n_0}\neq b_{-\hat{n}}^{-1}$ (i.e., we are in the iteration in which the symbols of the process for all times $-n_0-\hat{n}+1, \ldots, n_0$ are obtained for the first time),
we set $X_{m}^{(-2)}$ equal to 
$$
\inf\left\{g:U_{m}^{b_{-\hat{n}}^{-1}} < \beta((X^{b_{-\hat{n}}^{-1}})^{m-1}_{-n_0+1}b_{-\hat{n}}^{-1})+\sum_{b\leq g}\alpha(b|(X^{(-2)})^{m-1}_{-3n_0+1})-\alpha(b|(X^{b_{-\hat{n}}^{-1}})^{m-1}_{-n_0+1}b_{-\hat{n}}^{-1})\right\}.
$$
In the contrary case,  we have that $X_{-m}^{(-2)}$ is equal to
$$
\inf\left\{g:U_{-m}^{b_{-\hat{n}}^{-1}} < \beta((X^{(-1)})^{-m-1}_{-2n_0+1})+\sum_{b\leq g}\alpha(b|(X^{(-2)})^{-m-1}_{-3n_0+1})-\alpha(b|(X^{(-1)})^{-m-1}_{-2n_0+1})\right\}.
$$

Let $l_j=n_0(j-1)+1$, and $r_j=n_0j$  denote the smallest and the greatest value of the set $I_j$,  respectively, for $j\in \Z$. The above description can be generalized to describe the construction of $X_{r_n}^{(m)}, X_{r_n-1}^{(m)} \ldots, X_{l_n}^{(m)}$, for any $n\in \Z$ and for any $m\leq n$. This means that the construction we are considering always starts from times of the form $r_n$, for $n\in \Z$.  In Algorithm \ref{algo:ps2} we keep the convention $\inf\emptyset=*$ 

\begin{algorithm}[ht!]
\caption{PERFECT SIMULATION ALGORITHM TO SAMPLE $X_{-k}, \ldots,X_0$}
\label{algo:ps2}
\begin{algorithmic}
\State Inputs:  $k \in \N$. Outputs: $X_{-j}$, for all $j={-k,\ldots,0}$.
\State Initialization: $X_{-j}^{(-n)}\leftarrow*$, for all $j={0,\ldots, k}$ and $n=j-1, j-2, \ldots, -1$.
\State $n\leftarrow -1$
\While{$X_{-j}^{(-n)} =*$, for any $j={0,\ldots,k}$}
\State $n\leftarrow n+1$
\For{$m=l_{-n}, l_{-n}+1, \ldots, r_{-n}$:}
\For{$a_{-\hat{n}}^{-1} \in \mathcal{C}$}
\State Draw the random variable $U_{m}^{a_{-\hat{n}}^{-1}}$.
\State $X^{ a_{-\hat{n}}^{-1}}_{m} \leftarrow \displaystyle \inf\left\{g\in \A:U_{m}^{a_{-\hat{n}}^{-1}} <\sum_{b\leq g}\alpha(b|(X^{a_{-\hat{n}}^{-1}})_{l_{-n}}^{m-1}a_{-\hat{n}}^{-1})\right\}$
\EndFor
\If{$X^{a_{-\hat{n}}^{-1}}_{m}=g$, for all $a_{-\hat{n}}^{-1} \in \mathcal{C}$ and for some $g\in \A$}
\State $X_{m}^{(-n)}=g$.
\Else
\State $X_{m}^{(-n)}=*$.
\EndIf
\EndFor
\For{$k=n-1,n-2,\ldots, 0$:}
\If{$(X^{(-n)})_{l_{-k}-\hat{n}}^{l_{-k}-1}=b_{-\hat{n}}^{-1}$, for some $b_{-\hat{n}}^{-1} \in \mathcal{C}$}
\For{$m=l_{-k}, \ldots, r_{-k}$:}
\If{$X_{m}^{(-n+1)} = *$}
\If{$(X^{(-n+1)})_{l_{-k}-\hat{n}}^{l_{-k}-1}=b_{-\hat{n}}^{-1}$}
\State $
 X_{m}^{(-n)} \leftarrow$ \\ \\ $\hspace{0.5cm}\displaystyle\inf\left\{g:U_{m}^{b_{-\hat{n}}^{-1}} < \beta((X^{(-n+1)})^{-m-1}_{l_{-(n-1)}})+\sum_{b\leq g}\alpha(b|(X^{(-n)})^{m-1}_{l_{-n}})-\alpha(b|(X^{(-n+1)})^{m-1}_{l_{-(n-1)}})\right\} $
\Else
\State $
X_{m}^{(-n)} \leftarrow$ \\ \\ $\hspace{0.5cm}\displaystyle\inf\left\{g:U_{m}^{b_{-\hat{n}}^{-1}} < \beta((X^{b_{-\hat{n}}^{-1}})^{m-1}_{l_{-k}}b_{-\hat{n}}^{-1})+\sum_{b\leq g}\alpha(b|(X^{(-n)})^{m-1}_{l_{-n}})-\alpha(b|(X^{b_{-\hat{n}}^{-1}})^{m-1}_{l_{-k}}b_{-\hat{n}}^{-1})\right\} $
\EndIf
\Else
\State $X_{m}^{(-n)} \leftarrow X_{m}^{(-n+1)}$.
\EndIf
\EndFor
\Else
\State $X_{m}^{(-n)} \leftarrow X_{m}^{(-n+1)}$.
\EndIf
\EndFor
\EndWhile
\Return{$X_{-k}^{(-n)},\ldots,X_{0}^{(-n)}$}.
\end{algorithmic}
\end{algorithm}

\begin{remark}
We considered a coupling construction in which all the trajectories evolve independently. Different constructions would be possible, the choice we made is the one which simplifies the presentation and readability of the algorithm. 
\end{remark}

\begin{definition}\label{def:Ttilde}
For any $z \in \Z$ and for any $n\in I_z$, let
\begin{equation}\label{eq:Tntilde}
\tilde{T}[n]:=\sup\{j\leq z: X_n^{(j)}\neq *\}.
\end{equation}
We define for any $n,m\in \Z$, with $m<n$,
$$
\tilde{T}[m,n]\mydef\inf\{\tilde{T}[k],k=m,\ldots,n\}
$$
and for any $ m \in \Z,$
$$
\tilde{T}[m, \infty)  \mydef\inf\{\tilde{T}[k],k \geq m \}.
$$
\end{definition}

Note that $\tilde{T}[n]=k$ means that $X_n$ is obtained from $U_{l_k},U_{l_k+1}, \ldots, U_n$.
For any $n\in \{0,1,\ldots\}$, on the event $\{|\tilde{T}[n]|<\infty\}$, the value
\begin{equation}
X_n=X_n^{(\tilde{T}[n])}
\end{equation}
is exactly the value that Algorithm \ref{algo:ps2} returns. Note that $X_n$ does not depend on past values of the uniform random variables before time $l_{T[n]}$. 
Finally, note that
$$
X_{m}^{(m-k)}=
\begin{cases}
*, &\text{ if } m-k>\tilde{T}[m],\\
X_m, &\text{ if } m-k\leq \tilde{T}[m].
\end{cases}
$$ 
In Subsection \ref{sec:results} we will give conditions to guarantee that $|\tilde{T}[n]|<\infty$, a.s.

\subsection{Main results} \label{sec:results}

We first give two Theorems stating that Algorithms \ref{algo:ps} and \ref{algo:ps2} belong to the family of algorithms called {\it coupling from the past} in the literature. 

\begin{theorem} \label{teo:cftp}
Suppose that $|T[n]|<\infty$ a.s. for all $n\in \Z$.
Let $(X_n)_{n\in \Z}$ be the output of Algorithm \ref{algo:ps}. Then there exists a function
$F:[0,1)^{\N} \to \A$,
such that for any $n\in \Z$,
$$
X_n=F(U_n,U_{n-1}, \ldots, U_{T[n]}, u_{-1},u_{-2},\ldots)
$$
a.s., for any $u_{-1},u_{-2},\ldots \in [0,1)$. 
\end{theorem}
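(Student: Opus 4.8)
The plan is to exhibit one single measurable map $F$ that reproduces the output of Algorithm \ref{algo:ps} at \emph{every} time $n$, exploiting the fact that the recursion in Definition \ref{def:algo1} is time-homogeneous: the coefficients $\alpha(\cdot\,|\cdot)$ and $\beta(\cdot)$ depend only on the kernel $p$ and not on the absolute time index. First I would package the single-site construction into a function of the driving noise. For a generic sequence $v=(v_0,v_1,v_2,\ldots)\in[0,1)^{\N}$, define $\phi_j(v)\in\A_*$ to be the temporary value produced by the update rule of Definition \ref{def:algo1} run at time $0$ with $U_{-i}$ replaced by $v_i$; concretely $\phi_j(v)$ plays the role of $X_0^{(-j)}$ computed from $(U_0,\ldots,U_{-j})=(v_0,\ldots,v_j)$. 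By construction $\phi_j$ depends only on the coordinates $v_0,\ldots,v_j$ and is a measurable function of them.

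Next I would set $\tau(v):=\inf\{j\ge0:\phi_j(v)\neq *\}$ and define
$$
F(v):=\begin{cases}\phi_{\tau(v)}(v), & \tau(v)<\infty,\\ g_0, & \tau(v)=\infty,\end{cases}
$$
where $g_0\in\A$ is an arbitrary fixed symbol used on the (irrelevant) non-stopping event. Measurability of $F$ follows since each $\phi_j$ is measurable and $\tau$ is a hitting index of a measurable sequence.

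The core step is to match $F$ with the algorithm's output at an arbitrary time $n$. I would fix a realization in the full-measure event $\{|T[n]|<\infty\}$ and put $J:=n-T[n]\ge0$. Feeding $F$ the sequence $v$ with $v_i=U_{n-i}$ for $0\le i\le J$ and $v_i=u_{-(i-J)}$ for $i>J$, time-homogeneity of the recursion gives $\phi_j(v)=X_n^{(n-j)}$ for every $0\le j\le J$, since for these $j$ the inputs $v_0,\ldots,v_j$ coincide with the genuine variables $U_n,\ldots,U_{n-j}$. From the definition of $T[n]$ in \eqref{eq:Tn}, together with the fact that a temporary value stays in $\A$ once it is determined, we get $X_n^{(n-j)}=*$ for $j<J$ and $X_n^{(n-J)}=X_n^{(T[n])}=X_n\neq *$. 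Hence $\tau(v)=J$ and $F(v)=\phi_J(v)=X_n$. Because $\tau(v)=J$ and $\phi_0,\ldots,\phi_J$ use only $v_0,\ldots,v_J=U_n,\ldots,U_{T[n]}$, the tail coordinates $u_{-1},u_{-2},\ldots$ are never consulted, which is exactly the asserted independence of $X_n$ from the values placed beyond $T[n]$.

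The main obstacle I anticipate is making the identification $\phi_j(v)=X_n^{(n-j)}$ fully rigorous: one must check, by induction on the inner loop of Definition \ref{def:algo1}, that the whole triangular array of temporary values $(X^{(n-j)})^{n}_{n-j}$ is produced from $(U_{n-j},\ldots,U_n)$ by the \emph{same} deterministic map that sends $(v_0,\ldots,v_j)$ to $(X^{(-j)})^{0}_{-j}$, and that the clause keeping a value once it lies in $\A$ forces the hitting index $\tau(v)$ to equal $n-T[n]$ rather than some smaller index. Everything else is bookkeeping.
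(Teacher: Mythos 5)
Your proposal is correct and follows essentially the same route as the paper: your maps $\phi_j$ are exactly the paper's finite-horizon coupling functions $F^{(j)}$, your identification $\phi_j(v)=X_n^{(n-j)}$ is the paper's identity $X_m^{(m-k)}=F^{(k)}(U_{m-k}^m)$, and your stopping index $\tau(v)=n-T[n]$ together with the persistence clause (the paper's Proposition \ref{prop:monotone}) reproduces the paper's conclusion $X_m=F^{(T[m])}(U^m_{T[m]})$. The only cosmetic differences are that the paper builds $F$ explicitly through the nested intervals $I_n(g,\cdot)$ and a limit, whereas you define it abstractly via the recursion of Definition \ref{def:algo1} and a hitting index with a default symbol on the null non-stopping event.
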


The proof of the above result is given in Section \ref{sec:proofteocftp} where we construct a function $F$, called {\it coupling function} in the sequel, with the desired properties.

\begin{theorem} \label{coro:cftp}
Suppose that Assumption \ref{as:algo2} holds and suppose that $|\tilde{T}[n]|<\infty$ a.s. for all $n\in \Z$.
Let $(X_n)_{n\in \Z}$ be the output of Algorithm \ref{algo:ps2}. Then there exist functions
$$
\tilde{F}^{(j)}: [0,1)^{\N} \to \A, \text{ for } j=0,\ldots, n_0-1,
$$
such that for any $n\in \Z$ and for any $j=1,\ldots,n_0 $
$$
X_{nn_0-j}=\tilde{F}^{(j)}(U_{nn_0-j},U_{nn_0-j-1}, \ldots, U_{l_{\tilde{T}[nn_0-j]}}, u_{-1},u_{-2},\ldots) 
$$
a.s., for any $u_{-1},u_{-2},\ldots \in [0,1)$. 
\end{theorem}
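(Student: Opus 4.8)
The plan is to mirror the construction underlying Theorem~\ref{teo:cftp}, adapting it to the windowed, block-regenerative structure of Algorithm~\ref{algo:ps2}. The essential observation is that the whole construction in \eqref{def:Ynprocess} and in the description preceding Algorithm~\ref{algo:ps2} is invariant under a time shift by $n_0$: the recursions defining the coupled chains $X_m^{a_{-\hat n}^{-1}}$ always restart at the left endpoint $l_{-n}$ of the current window, the coalescence test is performed window by window, and the subsequent forward updates obey the same rules in every window. Consequently, writing $m = n n_0 - j$ with $j \in \{1,\ldots,n_0\}$ — a representation that partitions $\Z$ uniquely and records the offset of $m$ from the right endpoint of the window containing it — the functional dependence of $X_m$ on the driving variables depends on $m$ only through $j \bmod n_0$. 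This is exactly why $n_0$ coupling functions $\tilde F^{(0)},\ldots,\tilde F^{(n_0-1)}$ suffice, with the convention $\tilde F^{(n_0)} := \tilde F^{(0)}$ covering the case $j=n_0$.

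First I would fix an offset $j \in \{0,\ldots,n_0-1\}$ and define $\tilde F^{(j)}$ as the map which, given a sequence $(w_i)_{i\geq 0}$ — where $w_i$ encodes the finite vector of uniforms $(U^{a_{-\hat n}^{-1}})_{a_{-\hat n}^{-1}\in\mathcal C}$ attached to the time lying $i$ steps before the target, identified with a point of $[0,1)$ through a fixed measurable bijection of $[0,1)^{\mathcal C}$ onto $[0,1)$ — runs Algorithm~\ref{algo:ps2} exactly as described: it inspects the window containing the target, then the previous window, and so on; in each window it reconstructs the coupled chains started from every past in $\mathcal C$, performs the coalescence test, and, once a window has produced a fully determined string of length $\hat n$ belonging to $\mathcal U$, propagates this value forward through the intervening windows by the update rule of Algorithm~\ref{algo:ps}. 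On the event $\{|\tilde T[m]|<\infty\}$ this procedure halts and returns a symbol of $\A$, which I declare to be the value of $\tilde F^{(j)}$; off this event (a null set, by hypothesis) I set $\tilde F^{(j)}$ to a fixed default symbol. By Definition~\ref{def:Ttilde} and the identity $X_m = X_m^{(\tilde T[m])}$, the symbol so produced is almost surely the output of Algorithm~\ref{algo:ps2} at time $m$.

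The core of the argument is to show that $\tilde F^{(j)}$ does not depend on the uniforms attached to times strictly before $l_{\tilde T[m]}$, so that the dummy arguments $u_{-1},u_{-2},\ldots$ may be chosen freely. This rests on two facts. On the one hand, the coalescence defining the regeneration window $I_{\tilde T[m]}$ is detected by restarting the coupled chains from \emph{all} pasts in $\mathcal C$ at the left endpoint $l_{\tilde T[m]}$; the event that these chains hit a common string in $\mathcal U$, together with the value of that string, is therefore a deterministic function of $\{U^{a_{-\hat n}^{-1}}_{m'} : m' \in I_{\tilde T[m]},\, a_{-\hat n}^{-1}\in\mathcal C\}$ alone — this is precisely the Propp--Wilson mechanism that erases the dependence on the true past. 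On the other hand, once the common string $b_{-\hat n}^{-1}\in\mathcal U$ is available, the forward updates carrying the value from $I_{\tilde T[m]}$ up to the target are exactly the updates of Algorithm~\ref{algo:ps} with fixed past $b_{-\hat n}^{-1}$; by the construction underlying Theorem~\ref{teo:cftp} these are deterministic functions of $b_{-\hat n}^{-1}$ and of the uniforms attached to the windows from $I_{\tilde T[m]}$ up to the one containing the target. Chaining the two observations shows that the returned symbol is measurable with respect to the uniforms attached to times in $\{l_{\tilde T[m]},\ldots,m\}$ only, which is the claim.

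The main obstacle I expect is the bookkeeping needed to make the second fact precise: within a window the $n_0$ time points are not updated by a single formula but by the branching recursion in the description of Algorithm~\ref{algo:ps2} (the first unresolved position after a regeneration uses the regenerated string $b_{-\hat n}^{-1}$ as fixed past through the terms $\beta((X^{b_{-\hat n}^{-1}})^{m-1}_{l_{-k}}b_{-\hat n}^{-1})$, whereas later positions feed back the already-updated values), and one must verify that no quantity occurring in any of the $\inf\{\cdots\}$ prescriptions ever refers to a uniform or to an as-yet-undetermined symbol at a time before $l_{\tilde T[m]}$. This calls for a downward induction on the window index from the target window to $\tilde T[m]$, together with a forward induction on the position inside each window, checking at every step that the ranges $(X^{\,\cdot})^{m-1}_{l_{\cdot}}$ and the associated $\alpha,\beta$ quantities stay within $\{l_{\tilde T[m]},\ldots,m\}$. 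Once this is established, invariance under the shift by $n_0$ transfers the reference-window construction to every $n\in\Z$, yielding the same $\tilde F^{(j)}$ for all windows and completing the proof.
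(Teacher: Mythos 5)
Your proposal is correct and follows essentially the same route as the paper: the paper's own proof simply constructs the functions $\tilde{F}^{(j)}$ in a manner analogous to the interval-based construction of $F$ in the proof of Theorem~\ref{teo:cftp}, exploiting exactly the $n_0$-shift invariance and window structure you describe, and then concludes via the identity $X_m = X_m^{(\tilde{T}[m])}$. Your additional bookkeeping (the measurable encoding of the $\mathcal{C}$-indexed uniforms into a single coordinate, and the induction confirming that no update ever consults data before $l_{\tilde{T}[m]}$) fills in details the paper leaves implicit, but does not change the argument.
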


We now state two results, Theorems \ref{teo:infinitepath} and \ref{teo:renewal}, which provide sufficient conditions guaranteeing that the assumptions of Theorem \ref{teo:cftp} hold for Algorithm \ref{algo:ps}.

For any $n\geq 1$, let
\begin{equation} \label{def:rho}
\rho_n:=\sum_{a_{-n}^{-1}\in \A^n}\alpha(a_{-n})\alpha(a_{-(n-1)}|a_{-n})\ldots \alpha(a_{-1}|a_{-n}^{-2}).
\end{equation}
Since
\begin{multline*}
\rho_n=\sum_{a_{-n}^{-2}\in \A^{n-1}}\alpha(a_{-n})\alpha(a_{-(n-1)}|a_{-n})\ldots \alpha(a_{-2}|a_{-n}^{-3})\sum_{a_{-1}\in \A}\alpha(a_{-1}|a_{-n}^{-2})\leq \\ \sum_{a_{-n}^{-2}\in \A^{n-1}}\alpha(a_{-n})\alpha(a_{-(n-1)}|a_{-n})\ldots \alpha(a_{-2}|a_{-n}^{-3}) =\rho_{n-1},
\end{multline*}
 $(\rho_n)_{n\geq 1}$ is a non-increasing sequence of positive numbers, which therefore admits a limit.

\begin{theorem} \label{teo:infinitepath}
Suppose that there exists $c>0$ such that
$$
\lim_{n\to \infty}\rho_n\geq c.
$$
Then, the following holds.
\begin{enumerate}
\item $\E(|T[0]|)\leq \frac{1-c}{c}.$

\item For any $n\geq 1$, $\E(|T[0,n]|)\leq \frac{(1-c)(n+1)}{c}$.

\item $\P(|T[0,+\infty)|<+\infty) =1$.
\end{enumerate}
\end{theorem}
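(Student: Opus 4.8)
The plan is to first give a probabilistic reading of the sequence $\rho_n$ and then to reduce the tail of $|T[0]|$ to events of successful propagation. Consider the forward update rule \eqref{def:Ynprocess} initialised by forcing a spontaneous symbol at time $-n$ (an event of probability $\beta$ at that single site). Reading off the factors in \eqref{def:rho}, one sees that $\rho_n$ is exactly the probability that this construction produces non-$*$ values at \emph{all} of the sites $-n,-n+1,\dots,-1$, i.e.\ that a spontaneously generated symbol propagates forward without interruption for $n$ steps. The monotonicity recorded just after \eqref{def:rho} shows that $\rho_n\downarrow\rho_\infty:=\lim_n\rho_n\ge c$, so in fact $\rho_n\ge c$ for every $n$. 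On the other side, unfolding Definition~\ref{def:algo1} gives the clean identity that $X_0^{(-\ell)}\neq *$ if and only if $U_0<\beta\big((X^{(-\ell)})_{-\ell}^{-1}\big)$; since $\beta\big((X^{(-\ell)})_{-\ell}^{-1}\big)$ is non-decreasing in $\ell$ by Remark~\ref{rem:monotone}, independence of $U_0$ from the past yields $\P(|T[0]|\ge n)=1-\E\big[\beta\big((X^{(-(n-1))})_{-(n-1)}^{-1}\big)\big]$.

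The heart of the argument is the geometric bound $\P(|T[0]|\ge n)\le(1-c)^n$. I would prove it by a backward renewal decomposition, conditioning on whether the deepest inspected site $-(n-1)$ carries a spontaneous symbol. On $\{U_{-(n-1)}\ge\beta\}$, of probability $1-\beta$, the extra $*$ at the deep end changes nothing (by the drop-a-trailing-star rule of Remark~\ref{rem:monotone}), so $\{|T[0]|\ge n\}$ coincides with its analogue at range $n-1$, contributing $(1-\beta)\,\P(|T[0]|\ge n-1)$. On $\{U_{-(n-1)}<\beta\}$ the spontaneous symbol may propagate toward $0$, and the contribution of a \emph{full} clean run works out to exactly $\rho_{n-1}-\rho_n$. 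Iterating and using $\rho_k\ge c$ throughout then bounds the tail by a geometric series with ratio $1-c$. The main obstacle is precisely the infinite memory: a spontaneous symbol planted deep in the past keeps influencing every later update, so the naive recursion carries additional non-negative remainder terms arising from partial, interrupted runs, and these must be absorbed. I expect the cleanest route is to replace the exact recursion by a stochastic domination, coupling $|T[0]|$ with the hitting time of an i.i.d.\ sequence of \emph{regeneration trials}, each succeeding with probability $\rho_\infty\ge c$; proving that such a domination is valid despite the randomness shared across depths is the delicate point.

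Granting the geometric bound, item~1 is immediate: $\E(|T[0]|)=\sum_{n\ge1}\P(|T[0]|\ge n)\le\sum_{n\ge1}(1-c)^n=\tfrac{1-c}{c}$. For item~2 I would not re-run the estimate but simply dominate the maximum by a sum. By stationarity of the driving sequence $(U_n)$, for each $k$ the number of steps $k-T[k]$ that Algorithm~\ref{algo:ps} uses to fix $X_k$ has the same law as $|T[0]|$. Since $T[0]\le 0$ forces $\inf_{0\le k\le n}T[k]\le 0$, we have $|T[0,n]|=\max_{0\le k\le n}(-T[k])\le\sum_{k=0}^{n}(k-T[k])$, because each summand is non-negative and $-T[k]\le k-T[k]$. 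Taking expectations gives $\E(|T[0,n]|)\le(n+1)\,\E(|T[0]|)\le\tfrac{(1-c)(n+1)}{c}$.

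Finally, item~3. Call a site $-t$ a \emph{regeneration} if it carries a spontaneous symbol whose forward run never produces a $*$; by the reading of $\rho_n$ above, $\P(-t\text{ is a regeneration})=\rho_\infty\ge c>0$. The indicator field $\big(\mathbf 1\{-t\text{ is a regeneration}\}\big)_{t\in\Z}$ is a measurable factor of the i.i.d.\ sequence $(U_n)_{n\in\Z}$, hence stationary and ergodic with positive mean, so by Birkhoff's ergodic theorem regenerations occur with asymptotic density $\rho_\infty>0$; in particular infinitely many of them lie in $(-\infty,0]$ almost surely. If $-N$ is any such regeneration, its unbroken forward run fixes $X_k$ using only $U_{-N},\dots,U_k$ for every $k\ge -N$, so $X_k^{(-N)}\neq *$ and therefore $T[k]\ge -N$ for all $k\ge0$, whence $T[0,+\infty)\ge -N>-\infty$. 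This gives $\P(|T[0,+\infty)|<+\infty)=1$. The subtle step here, as in the tail bound, is that regeneration events at different depths are dependent, which is exactly why I invoke ergodicity rather than a second Borel--Cantelli argument.
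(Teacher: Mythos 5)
The heart of your proposal---the geometric tail bound $\P(|T[0]|\geq n)\leq(1-c)^n$---has a genuine gap, and it is not repairable: the bound is simply false in general, so no resolution of the ``delicate point'' you flag (the stochastic domination by i.i.d.\ regeneration trials) can exist, since such a domination would imply the bound. Concretely, take Example \ref{ex:autoregressive} with $\theta_0=1-\epsilon$, $\theta_M=\epsilon$ and all other $\theta_j=0$. For every $n<M$ the only informative lag $M$ is out of range, so the update at step $n$ fails with probability exactly $1-\theta_0=\epsilon$ no matter what has been revealed so far; hence $\P(|T[0]|>n)=\epsilon$ for all $n<M$. Meanwhile $c=\lim_n\rho_n=(1-\epsilon)^M$, which with $\epsilon=1/M$ stays near $e^{-1}$. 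The tail thus sits at the constant value $1/M$ over a stretch of length $M$ and exceeds $(1-c)^n$ by many orders of magnitude once $n$ is of order $\log M$. What \emph{does} decay geometrically---and this is the paper's key idea, which your proposal misses---is not the time of failure but the \emph{number} of failures. The paper writes $\E(|T[0]|)=\sum_{n\geq0}\P(Y_n=*)$ as the expected number of $*$'s of an auxiliary forward chain $(Y_n)_{n \geq 0}$ (equations \eqref{eq:pn} and \eqref{eq:ET0}), lets $\eta_m$ be the time of the $m$-th star, and uses the monotonicity of Remark \ref{rem:monotone} to show that, conditionally on any configuration containing $m-1$ stars, the probability of ever producing another star is at most $1-c$; this yields $\P(\eta_m<\infty)\leq(1-c)^m$ and hence $\E(|T[0]|)\leq\sum_{m\geq1}(1-c)^m=\frac{1-c}{c}$. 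In the example above this is exactly what saves the statement: failures are few in number (about one on average) even though the position of the first failure has a fat, non-geometric tail.

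The remaining parts of your proposal are sound. Your item 2 reduction (dominating the maximum $|T[0,n]|$ by the sum of the identically distributed variables $k-T[k]$) is correct and is essentially the paper's \eqref{eq:t0n}. Your item 3 argument is also valid and takes a genuinely different route from the paper: you interpret $\rho_\infty=\lim_n\rho_n\geq c$ as the probability that a given site starts an unbroken forward run, note that the field of such regeneration indicators is a stationary ergodic factor of the i.i.d.\ field $(U_n)_{n\in\Z}$, and invoke Birkhoff's theorem to produce infinitely many regenerations in the past; the paper instead identifies $\P(|T[0,+\infty)|>n)$ with the tail of $\bar\eta=\sup\{m:Y_m=*\}$ and reuses the geometric bound on $\P(\eta_n<\infty)$. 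Your ergodic argument is self-contained and would survive as written; but since items 1 and 2 carry the quantitative content of the theorem and item 1 rests on a false estimate, the proposal as a whole does not prove the statement.
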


\begin{theorem} \label{teo:renewal}
Suppose that
$$
\sum_{n=1}^{\infty}\rho_n=\infty.
$$
Then, for any $n\geq 1$, $\P(|T[0,n]|<+\infty)=1$.
\end{theorem}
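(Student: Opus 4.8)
The plan is to reduce the statement to a single time and then read the combinatorial weight $\rho_n$ of \eqref{def:rho} as the probability of a self-contained, ``clean'' propagation, which feeds into a renewal/recurrence argument. First I would reduce to showing $\P(|T[0]|<\infty)=1$: since $T[0,n]=\inf\{T[k]:0\le k\le n\}$ is an infimum over finitely many indices, $|T[0,n]|<\infty$ a.s. follows once $|T[k]|<\infty$ a.s. for each $k\in\{0,\dots,n\}$, and by stationarity of the i.i.d. sequence $(U_n)_{n\in\Z}$ all the $T[k]$ share the same law, so it suffices to treat $T[0]$. Next I would pin down the meaning of $\rho_n$. Running the update of Definition~\ref{def:algo1} forward from a spontaneously generated symbol at time $-n$ (i.e. $U_{-n}<\beta$), and treating everything strictly before $-n$ as unknown, the event that the symbols at $-n,-(n-1),\dots,-1$ are all produced cleanly — each $U_{-j}$ falling below the running threshold built only from the symbols produced since $-n$ — has probability exactly $\sum_{a_{-n}^{-1}}\alpha(a_{-n})\alpha(a_{-(n-1)}|a_{-n})\cdots\alpha(a_{-1}|a_{-n}^{-2})=\rho_n$. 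The reason for using these star-padded thresholds is Remark~\ref{rem:monotone}: adjoining genuine past symbols can only increase $\alpha(g|\cdot)$, so a clean star-padded run that reaches time $0$ forces $X_0^{(-k)}\neq *$ regardless of the remaining history, whence $|T[0]|\le k$.

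It therefore suffices to show that, almost surely, some spontaneous source to the left of $0$ starts a clean star-padded run reaching $0$. The main step is to organize these runs into a renewal process. Following the regeneration constructions of \cite{cff} and \cite{desantispiccioni}, I would build regeneration times out of clean blocks: because a star-padded run resets its conditioning at its own source, runs anchored at sources sitting on disjoint sets of the uniforms $(U_t)$ evolve independently, and the probability that such a block spans a window of length $n$ is controlled from below by $\rho_n$. This turns ``a source reaches $0$'' into the event that a renewal chain, whose increments have tail comparable to $(\rho_n)$, connects the distant past to the window containing $0$, with $(\rho_n)$ playing the role of the associated renewal sequence.

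With the renewal in place I would invoke the classical dichotomy for renewal sequences: the chain is recurrent — equivalently, regeneration points occur infinitely often as $t\to-\infty$ almost surely — precisely when $\sum_n\rho_n=\infty$, which is the hypothesis. Recurrence guarantees that almost surely there is a source arbitrarily far back whose clean run reaches $0$; taking the nearest such source gives $|T[0]|<\infty$ a.s., and combined with the reduction above yields $\P(|T[0,n]|<\infty)=1$ for every $n\ge 1$.

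The delicate point, and the main obstacle, is the long-range dependence: the threshold $\beta(\cdot)$ governing each symbol depends on the \emph{entire} accumulated history since the source, so the naive events ``source at $-k$ reaches $0$'' are strongly dependent and share all the uniforms in $[-k,0]$. The crux is therefore twofold: use Remark~\ref{rem:monotone} to replace history-dependent thresholds by star-padded ones, decoupling runs anchored at distinct sources into genuinely independent blocks; and verify that the resulting inter-regeneration structure is a bona fide renewal whose recurrence is governed exactly by the divergence of $\sum_n\rho_n$, rather than by the stronger condition $\lim_n\rho_n>0$ of Theorem~\ref{teo:infinitepath} (where an infinite clean path exists with positive probability and a direct geometric estimate replaces the renewal).
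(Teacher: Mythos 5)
Your opening moves are sound and in fact coincide with the paper's: the reduction to a single time $T[0]$, the reading of $\rho_n$ as the survival probability of a ``clean'' star-padded run, and the idea of decoupling by resetting thresholds at each failure are exactly what the paper does with its auxiliary chain $Y$ and the modified chain $\tilde Y$, whose thresholds use only the symbols since its last $*$, so that the $*$'s of $\tilde Y$ are renewal marks and the excursions between them are i.i.d.\ with $\P(\tau_1\geq m)=\rho_m$. The genuine gap is your concluding step. The ``classical dichotomy'' you invoke --- regeneration points occur infinitely often iff $\sum_n\rho_n=\infty$ --- is not a correct statement for this structure, because $\rho_n$ is the \emph{tail of the inter-renewal distribution}, not the renewal sequence $u_n=\P(\text{renewal at time } n)$ to which the persistence criterion ($\sum_n u_n=\infty$ iff the event is persistent) applies. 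For the renewal process that star-padding produces, recurrence of the marks is governed by $\lim_n\rho_n=0$ (properness of the inter-arrival law), irrespective of whether $\sum_n\rho_n$ diverges; what $\sum_n\rho_n=\infty$ says is that the inter-arrival law has infinite \emph{mean}. Moreover, the marks of this renewal process are the \emph{failures} (times at which a run dies), so ``marks recurrent'' is the opposite of what you need: the hypothesis must be used to show failures become asymptotically \emph{rare}. That is how the paper argues: by the renewal SLLN the a.s.\ frequency of $*$'s of $\tilde Y$ is $1/\E(\tau_1)=0$ since $\E(\tau_1)=\sum_m\rho_m=\infty$, hence the Ces\`aro means of $\P(\tilde Y_k=*)$ vanish, and then the domination $p_n=\P(|T[0]|>n)\leq\P(\tilde Y_n=*)$ together with the \emph{monotonicity} of $p_n$ gives $p_n\to 0$. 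If instead you try to make your ``good'' events --- a source at $-k$ whose clean run reaches $0$ --- into a renewal sequence with $u_k=\rho_k$, the required restart property fails for exactly the dependence reason you flagged: runs anchored at different sources overlap in time, share uniforms and use different thresholds, so occurrence at $-m$ does not make occurrence at $-n$ a fresh copy; the persistence criterion therefore does not apply, and an infinite expected number of successes does not by itself yield almost sure occurrence.

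There is a second, smaller gap: the inclusion ``a clean star-padded run reaching $0$ forces $X_0^{(-k)}\neq*$'' does not follow from Remark~\ref{rem:monotone} alone. Monotonicity compares $\alpha(g|\cdot)$ along a history and a sub-history of it, but the symbols a \emph{standalone} clean run produces need not form a sub-history of the algorithm's temporary values: with fresh interval arrangements the clean run can assign, at some time, a symbol different from the algorithm's, after which the two threshold sequences are no longer comparable. The paper sidesteps this by coupling: $\tilde Y_n$ is defined on intervals sharing left endpoints with those of $Y_n$ but with the shorter star-padded lengths, so that $\tilde Y_n\neq*$ automatically implies $\tilde Y_n=Y_n$, and the domination $\P(Y_n=*)\leq\P(\tilde Y_n=*)$ is immediate. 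Your plan needs this nesting (or the pointwise discrete renewal theorem plus an aperiodicity check, if you want $\P(\tilde Y_n=*)\to 0$ without the monotonicity step) to be made rigorous.
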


We now state two results, Theorems \ref{teo:algops2_1} and \ref{teo:algops2_2}, which provide sufficient conditions guaranteeing that the assumptions of Theorem \ref{coro:cftp} hold for Algorithm \ref{algo:ps2}.

Recall that  $\mathcal{U}$ denotes the unique closed class associated to $p^{[\tilde{n}]}_{Markov}.$ For any $n\geq 1$, let
$$
\tilde{\rho}_n:=\sum_{x_{-\hat{n}}^{-1}\in \mathcal{U}}\sum_{a_{-n}^{-1}\in \A^n}\alpha(a_{-n}|x_{-\hat{n}}^{-1})\alpha(a_{-(n-1)}|a_{-n}x_{-\hat{n}}^{-1})\ldots \alpha(a_{-1}|a_{-n}^{-2}x_{-\hat{n}}^{-1}).
$$
Note that $(\tilde{\rho}_n)_{n\geq 1}$ is a non-negative and non-increasing sequence, and therefore admits a limit.

\begin{theorem}\label{teo:algops2_1}
Suppose that there exists $c>0$ such that
$$
\lim_{n\to \infty}\tilde{\rho}_n\geq c,
$$
Then, the following holds.
\begin{enumerate} 
\item $\E(|\tilde{T}[0]|)<\infty.$

\item For any $n\geq 1$, $\E(|\tilde{T}[0,n]|)<\infty$.

\item $\P(|\tilde{T}[0,+\infty)|<+\infty) =1$.
\end{enumerate}
\end{theorem}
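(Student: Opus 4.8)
The plan is to mirror the proof of Theorem~\ref{teo:infinitepath}, with two substitutions dictated by the absence of spontaneous symbols. The role played there by a spontaneously generated symbol at a single time is now played by the coalescence of the whole coupled family $(Y^{a_{-\hat n}^{-1}})_{a_{-\hat n}^{-1}\in\mathcal C}$ onto one common string of $\mathcal U$ at the right edge of a window $I_{-n}$, and the single-time recursion is replaced by the block recursion of Algorithm~\ref{algo:ps2} over the windows $I_j$. With these dictionaries $\tilde\rho_n$ takes over the role of $\rho_n$: it bounds from below the probability that, once the coupled chains have met at a string $x_{-\hat n}^{-1}\in\mathcal U$, the forced cascade produced by the guaranteed weights $\alpha(\cdot\mid\cdot)$ survives through $n$ further steps. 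The same telescoping computation already displayed for $\rho_n$ shows $(\tilde\rho_n)_n$ is non-increasing, so the hypothesis $\lim_n\tilde\rho_n\ge c$ upgrades to $\tilde\rho_n\ge c$ for every $n$, a uniform lower bound on cascade survival independent of how far forward one must reach.

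First I would make the regeneration mechanism precise. By the definition of $\hat n$ and $n_0$ and the proposition establishing $n_0<\infty$, the coupled chains meet at \emph{any prescribed} string of $\mathcal U$ within one window with positive probability; in particular we may require them to meet at the string $x\in\mathcal U$ that maximises the survival weight $\sum_{a_{-n}^{-1}}\alpha(a_{-n}\mid x)\cdots\alpha(a_{-1}\mid a_{-n}^{-2}x)$. Since this weight, summed over $x\in\mathcal U$, equals $\tilde\rho_n\ge c$, its maximal term is at least $c/|\mathcal U|$. Conditionally on such a favourable meeting at the edge of $I_{-n}$, the subsequent updates of Algorithm~\ref{algo:ps2} coincide with the forward cascade of Algorithm~\ref{algo:ps} started from the fixed past $x$, so the cascade reaches time $0$ uninterrupted with probability at least $c/|\mathcal U|$. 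Combining the (stationary, strictly positive) meeting probability with this survival bound, each window visited into the past yields a chance, bounded below by a fixed $\delta>0$, of producing a source whose cascade determines $X_0$.

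This reduces the three assertions to the renewal estimates of Theorem~\ref{teo:infinitepath}. For~(1) I would dominate $|\tilde T[0]|$, counted in windows and then rescaled by the fixed length $n_0$, by a geometrically decaying tail of parameter $\delta$, giving $\E(|\tilde T[0]|)<\infty$; the same bound holds at every fixed time by block-stationarity. For~(2) I would use $|\tilde T[0,n]|=\max_{0\le k\le n}|\tilde T[k]|\le\sum_{k=0}^n|\tilde T[k]|$ together with the finiteness just obtained, so that $\E(|\tilde T[0,n]|)\le\sum_{k=0}^n\E(|\tilde T[k]|)<\infty$. For~(3) I would run the survival event to infinity: by the same monotone-limit bound, coalescing in a window onto a fixed favourable string of $\mathcal U$ and then surviving for \emph{all} later times has probability at least a fixed $\delta'>0$; since the driving field $(U^{a_{-\hat n}^{-1}}_m)$ is i.i.d.\ and hence ergodic under the block shift, windows hosting such a forever-surviving regeneration occur with positive frequency, thus infinitely often into the past, almost surely. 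The most recent such window has finite index and determines $X_k$ for all $k\ge0$ simultaneously, whence $|\tilde T[0,\infty)|<\infty$ almost surely.

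The main obstacle I anticipate is the dependence among the successive coalescence-and-cascade events. Unlike the spontaneous symbols of Algorithm~\ref{algo:ps}, which are read off disjoint coordinates of the i.i.d.\ field, the meeting of the coupled chains in a window and the forward cascade it triggers both consult the same variables $U^{a_{-\hat n}^{-1}}_m$, and the free coupling does not keep the chains merged once they have met. The delicate point is therefore to isolate inside each window a sub-event, depending only on that window's uniforms and the minimal guaranteed weights, that simultaneously forces coalescence onto the favourable string of $\mathcal U$ and launches a forced cascade, so that its indicators can be compared with an i.i.d.\ sequence for the geometric bound of~(1) and handled by stationarity and ergodicity for~(3). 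Verifying that such a sub-event has probability bounded below uniformly in the past is where the real work lies; once it is in place, the three conclusions follow exactly as for Theorem~\ref{teo:infinitepath}.
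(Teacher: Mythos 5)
Your plan follows the same route as the paper's own proof, which is itself only a sketch: build a block analogue of the auxiliary chain used for Theorem \ref{teo:infinitepath}, let coalescence of the coupled family onto a string of $\mathcal{U}$ play the role of a spontaneous symbol and $\tilde{\rho}_n$ the role of $\rho_n$, and rerun the renewal estimates. Your two quantitative ingredients are also the right ones: the definition of $n_0$ guarantees that within one window the coupled chains hit any prescribed string of $\mathcal{U}$, in particular the string $x$ maximising the survival weight $s_x:=\lim_n\sum_{a_{-n}^{-1}\in\A^n}\alpha(a_{-n}\mid x)\cdots\alpha(a_{-1}\mid a_{-n}^{-2}x)$, with a fixed probability $\gamma_x>0$, and $\sum_{x\in\mathcal{U}}s_x=\lim_n\tilde{\rho}_n\geq c$ gives $s_x\geq c/|\mathcal{U}|$ for that $x$.

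The obstacle you defer at the end is a real feature of the backward description, but it is resolved by exactly the device the paper uses, and no idea beyond those of Theorem \ref{teo:infinitepath} is required. The paper introduces a forward-in-time block process $(Z_n)_{n\geq 0}$ (the analogue of $(Y_n)$): in each block of length $n_0$, \emph{either} the previous block did not end with a known string of $\mathcal{C}$, in which case the block performs a coalescence attempt on its own uniforms, \emph{or} it did end with a known string $b$, in which case each value is obtained by thresholding the single uniform $V_n^{b}$ against $\sum_{b'\leq g}\alpha(b'\mid Z_0^{n-1})$. This either/or structure removes the double use of uniforms you worry about; it has the same law as the algorithm's output precisely because the algorithm's update formula is a refinement (nested intervals) of the within-window partition, and the resulting reversal identity $\P(|\tilde{T}[0]|>n)=\P(Z_{nn_0}=*)$ is the analogue of \eqref{eq:probt0geqn}. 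In the forward picture the dependence untangles: conditioning on all previous failures is conditioning on earlier blocks' uniforms only; the current block's coalescence attempt uses fresh uniforms, so it hits $x$ with probability $\gamma_x$ independently of the past; and the ensuing cascade runs on later blocks' fresh uniforms, its one-step conditional survival probability being $\beta$ of the currently known past, which by Remark \ref{rem:monotone} is at least the $\beta$ computed from the cascade symbols and $x$ alone, uniformly in the conditioning pattern — verbatim the argument bounding $\P\left(\eta_n<\infty \mid \bigcap_{m=1}^{n-1}\{\eta_m<\infty\}\right)$ in Theorem \ref{teo:infinitepath}. This produces your $\delta=\gamma_x s_x$ and parts (1) and (2) follow as you describe; for part (3) your ergodic-frequency argument is valid, though the paper's route (last visit of the auxiliary chain to $*$, together with $\P(\eta_n<\infty)\to 0$) is more elementary and avoids invoking ergodicity.
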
 

\begin{theorem}\label{teo:algops2_2}
Suppose that
$$
\sum_{n=1}^{\infty}\tilde{\rho}_n=\infty
$$
Then, for any $n\geq 1$, $\P(|\tilde{T}[0,n]|<+\infty)=1$.
\end{theorem}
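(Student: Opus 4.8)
The plan is to mirror the proof of Theorem~\ref{teo:renewal}, replacing the spontaneous symbols that act as reconstruction sources in Algorithm~\ref{algo:ps} by the coalescence events of Algorithm~\ref{algo:ps2}, and replacing $\rho_n$ by $\tilde\rho_n$. First I would reduce the statement to a single time. Since $\tilde T[0,n]=\inf\{\tilde T[k]:0\le k\le n\}$ is an infimum over finitely many indices, $\{|\tilde T[0,n]|=\infty\}=\bigcup_{k=0}^{n}\{|\tilde T[k]|=\infty\}$, so a union bound shows it suffices to prove $\P(|\tilde T[k]|<\infty)=1$ for each fixed $k$. Because the uniforms $U_m^{a_{-\hat n}^{-1}}$ are i.i.d.\ across the windows $I_z$, the construction is stationary under shifts by $n_0$; hence there are only $n_0$ distinct laws among the $\tilde T[k]$, and it is enough to establish a.s.\ finiteness for the representatives $k=0,\ldots,n_0-1$, all treated identically. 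I would describe the case $\tilde T[0]$.

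Next I would identify the reconstruction sources. In Algorithm~\ref{algo:ps2} the analogue of a spontaneous symbol is a window $I_z$ at whose right end the coupled chains $(Y^{a_{-\hat n}^{-1}}:a_{-\hat n}^{-1}\in\mathcal C)$ have all met in a common string $b_{-\hat n}^{-1}\in\mathcal U$. By Assumption~\ref{as:algo2} the matrix $p^{[\hat n]}_{Markov}$ has a single closed, aperiodic class $\mathcal U$, and the proposition establishing $n_0<\infty$ guarantees that each window produces such a coalescence into $\mathcal U$ with probability bounded below by a fixed positive constant. These coalescence events are mutually independent, since the construction within window $I_z$ restarts from all pasts in $\mathcal C$ and uses only the uniforms $\{U_m^{a_{-\hat n}^{-1}}:m\in I_z\}$. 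Hence, by the second Borel--Cantelli lemma, a.s.\ infinitely many windows to the left of $0$ carry a coalescence into $\mathcal U$; these are the sources that drive the reconstruction.

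I would then show that, once such a source $b_{-\hat n}^{-1}\in\mathcal U$ has been produced, the forward updates prescribed in the description of Algorithm~\ref{algo:ps2} coincide in law with the Algorithm~\ref{algo:ps} reconstruction carried out with the fixed past $b_{-\hat n}^{-1}$: conditioning on the source, the value at the next time is determined by a combined guaranteed interval of size $\alpha(g|b_{-\hat n}^{-1})$, and iterating gives that the probability of reconstructing a contiguous block of length $n$ forward from a source in $\mathcal U$ equals $\sum_{b_{-\hat n}^{-1}\in\mathcal U}\sum_{a_{-n}^{-1}\in\A^n}\alpha(a_{-n}|b_{-\hat n}^{-1})\cdots\alpha(a_{-1}|a_{-n}^{-2}b_{-\hat n}^{-1})=\tilde\rho_n$. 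This is the precise analogue of the identity $\rho_n=\P(X_{-j}^{(-n)}\in\A,\ j=1,\dots,n)$ underlying the proof of Theorem~\ref{teo:renewal}.

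With the coalescence-into-$\mathcal U$ events playing the role of the spontaneous sources and $\tilde\rho_n$ that of $\rho_n$, the renewal (house-of-cards) recurrence argument of Theorem~\ref{teo:renewal} then transfers: the divergence $\sum_n\tilde\rho_n=\infty$ is exactly the recurrence condition guaranteeing that, a.s., some source to the left of $0$ reconstructs forward all the way to time $0$, i.e.\ that the backward reconstruction chain returns to its base state. This yields $|\tilde T[0]|<\infty$ a.s., and hence $\P(|\tilde T[0,n]|<\infty)=1$. The hard part will be the verification in the third step: one must check carefully that, after a coalescence into $\mathcal U$, the forward updates of Algorithm~\ref{algo:ps2} are probabilistically identical to the fixed-past Algorithm~\ref{algo:ps} updates, so that $\tilde\rho_n$ is genuinely the right analogue of $\rho_n$ and the renewal argument applies without change. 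A secondary technical point is reconciling the two time scales, namely the window scale $n_0$ at which the sources appear and the unit scale at which the reconstruction propagates forward.
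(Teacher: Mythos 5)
Your proposal is correct and takes essentially the same approach as the paper, whose own proof is only a sketch: it defines a forward-in-time auxiliary chain $(Z_n)$ for Algorithm \ref{algo:ps2} in which coalescence of the coupled chains over successive windows replaces the spontaneous symbols of Algorithm \ref{algo:ps}, records the identity $\P(|\tilde{T}[0]|>n)=\P(Z_{nn_0}=*)$ to reconcile the window and unit time scales, and then reruns the dominating-chain renewal argument of Theorem \ref{teo:renewal} with $\tilde{\rho}_n$ in place of $\rho_n$. Two imprecisions in your write-up are harmless but worth noting: the forward reconstruction probability is not equal to $\tilde{\rho}_n$ but to a sum weighted by the (finitely many, strictly positive) probabilities of coalescing onto each string of $\mathcal{U}$, so only divergence of the series is what transfers (which suffices, $\tilde{\rho}_n$ being non-increasing); and the concluding step is not a recurrence or Borel--Cantelli statement about some source reconstructing up to time $0$, but rather the argument you invoke from Theorem \ref{teo:renewal}, namely that infinite mean interarrival times for the dominating renewal process force the density of stars to vanish, after which monotonicity of $n\mapsto\P(|\tilde{T}[0]|>n)$ upgrades this to pointwise convergence.
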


The coupling functions considered in Algorithms \ref{algo:ps} and \ref{algo:ps2} are original, since they allow for unknown positions -- those taking the symbol $*$ -- in the past.  
In the coupling from the past framework, other articles consider perfect simulation algorithms based on coupling functions for chains with infinite memory. \cite{cff} and \cite{desantispiccioni}
consider a perfect simulation algorithm based on the construction of a function
$
G:[0,1]\times \H \to \A
$
for which a similar result as in Theorem \ref{teo:cftp} or Theorem \ref{coro:cftp} holds. Once such a construction is achieved, under suitable assumptions, the sequence $(\tilde{X}_n)_{n\in \Z}$ is easily shown to be the unique (in law) process which is compatible with the kernel $p$. Similar ideas have been presented in \cite{gallo}, where a perfect simulation algorithm for chains with memory of variable length is introduced, 
in \cite{gallogarcia}, where chains of infinite order are considered having a transition kernel that is only locally continuous, and in \cite{gallotakahashi}, where regular probability kernels $p$ on a finite alphabet $\A$ are studied. 

This being said, once the construction of a perfect simulation algorithm is achieved, the same consequences as those presented in the above cited works hold also in our case.  In particular we have existence and uniqueness of a stationary measure (Corollaries 4.1 and 
Proposition 6.1 (iii) of \cite{cff} and Corollary 4.1 of \cite{gallogarcia}), the loss of memory property (Corollary 4.1 of \cite{cff}), the existence of a regeneration scheme (Corollary 6.1 of \cite{cff} and Corollary 2 of \cite{gallo}) and the concentration of measure phenomenon (Corollary 1 of \cite{gallotakahashi}).
These results are summarized in the next propositions. Their proofs, which are omitted, use standard arguments and can be found in the articles cited above.

The following propositions are stated considering $(X_n)_{n\in \Z}$ as the output of Algorithm \ref{algo:ps} and the associated times as in Definition \ref{def:T}. These propositions also hold when we consider $(X_n)_{n\in \Z}$ as the output of Algorithm \ref{algo:ps2} and the associated times as in Definition \ref{def:Ttilde}.

\begin{proposition} \label{prop:consequences}
Suppose that $\P(|T[m,n]|<\infty)=1$, for any $m,n \in \Z$, $m<n$. Then the following holds.
\begin{enumerate}
\item $(X_n)_{n\in \Z}$ is the unique stationary chain compatible with $p$.  
\item (Loss of memory) For any $k\leq 1$, $a^{-1}_{-\infty}\in \A^{\N}$ and $b^{-1}_{-\infty}\in \A^{\N}$,
$$
|\P(X_0 \in \cdot |X_{-\infty}^{-k}=a^{-k}_{-\infty}) -\P(X_0 \in \cdot |X_{-\infty}^{-k}=b^{-k}_{-\infty})| \leq \P(|T[0]|\geq  k).
$$
\end{enumerate}
    
\end{proposition}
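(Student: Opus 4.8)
The plan is to read $(X_n)_{n\in\Z}$ as a deterministic functional of the i.i.d. sequence $(U_n)_{n\in\Z}$ furnished by Theorem~\ref{teo:cftp}. Since $\P(|T[m,n]|<\infty)=1$ for all $m<n$ forces $|T[n]|<\infty$ a.s. for each $n$ (the infimum defining $T[m,n]$ being finite means each $T[k]>-\infty$), Theorem~\ref{teo:cftp} applies and yields $X_n=F(U_n,U_{n-1},\ldots,U_{T[n]},u_{-1},u_{-2},\ldots)$ a.s.\ for arbitrary $u_{-1},u_{-2},\ldots$; in particular $X_n$ is $\sigma(U_{T[n]},\ldots,U_n)$-measurable and insensitive to the uniforms strictly before time $T[n]$. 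For part~(1), stationarity follows because the construction is translation equivariant in $n$ while $(U_n)$ is i.i.d.: shifting the uniforms shifts $(X_n)$ and preserves its law. Compatibility, $\P(X_n=g\mid X_{-\infty}^{n-1})=p(g\mid X_{-\infty}^{n-1})$, is read off from the interval construction of Definition~\ref{def:algo1}: as the revealed past grows, the threshold assigned to a symbol $g$ increases monotonically through the values $\alpha(g\mid X_{-n}^{n-1})$, whose limit is $p(g\mid X_{-\infty}^{n-1})$, so the conditional mass placed on $g$ is exactly $p(g\mid X_{-\infty}^{n-1})$. Once compatibility holds, Proposition~\ref{prop:admissiblehistories} gives $X_{-\infty}^n\in\H$ a.s.

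For uniqueness I would take any stationary $(\tilde X_n)$ compatible with $p$ and with $\tilde X_{-\infty}^n\in\H$ a.s., and realise it on the same probability space driven by the same uniforms through the forward update rules of Algorithm~\ref{algo:ps}. Finiteness of $T[m,n]$ forces the block $X_m^n$ to be a function of the finitely many variables $U_{T[m,n]},\ldots,U_n$ that ignores everything before time $T[m,n]$; the same being true of $\tilde X_m^n$, the two blocks coincide almost surely in this coupling. Hence all finite-dimensional distributions of $(\tilde X_n)$ and $(X_n)$ agree, giving uniqueness in law.

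Part~(2) is a coupling-from-the-past tail estimate, and I would first isolate two measurability facts for a fixed $k\ge 1$. First, $X_{-\infty}^{-k}$ is $\sigma(U_j:j\le -k)$-measurable, because each $X_j$ with $j\le -k$ uses only $U_{T[j]},\ldots,U_j$ and $T[j]\le j\le -k$. Second, $\{|T[0]|<k\}=\{T[0]>-k\}=\{X_0^{(-(k-1))}\neq *\}$ is $\sigma(U_{-(k-1)},\ldots,U_0)$-measurable, and on this event $X_0=F(U_0,\ldots,U_{T[0]},\ldots)$ depends only on $U_{-(k-1)},\ldots,U_0$. These two $\sigma$-fields are independent, so conditioning on $\{X_{-\infty}^{-k}=a_{-\infty}^{-k}\}$ alters neither $\P(T[0]>-k)$ nor the joint law of $(X_0,\mathbf{1}\{T[0]>-k\})$ on that event. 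Writing, for any target set $C$,
\[
\P(X_0\in C\mid X_{-\infty}^{-k}=a_{-\infty}^{-k})=\P(X_0\in C,\,T[0]>-k)+\P(X_0\in C,\,T[0]\le -k\mid X_{-\infty}^{-k}=a_{-\infty}^{-k}),
\]
where the first term, equal to the unconditional probability by the independence just noted, is free of the conditioning past and therefore cancels against the analogous expression for $b_{-\infty}^{-k}$. The remaining two terms each lie in $[0,\P(T[0]\le -k)]=[0,\P(|T[0]|\ge k)]$, so the difference is bounded in absolute value, uniformly in $C$, by $\P(|T[0]|\ge k)$, which is the asserted bound.

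The step I expect to be most delicate is the rigorous verification of compatibility in part~(1): one must check that the monotone telescoping of the thresholds $\alpha(g\mid X_{-n}^{n-1})$ in Definition~\ref{def:algo1} accumulates exactly the mass $p(g\mid X_{-\infty}^{n-1})$ in the limit, which is precisely where the structure of the kernel and of the admissible histories $\H$ enter. The uniqueness coupling is standard but also needs care in arguing that an arbitrary compatible chain, not a priori built from the $U_n$, can be driven by the same update rules; I would handle this as in \cite{desantispiccioni} and \cite{cff}, using that the dummy variables $u_{-1},u_{-2},\ldots$ are made irrelevant by Theorem~\ref{teo:cftp}. The measurability and independence bookkeeping in part~(2) is then routine once the two $\sigma$-field statements above are in place.
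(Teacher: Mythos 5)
Your proposal reconstructs the standard coupling-from-the-past arguments that the paper itself does not write out (the paper omits this proof, deferring to Corollary 4.1 and Proposition 6.1 of \cite{cff}, to \cite{desantispiccioni} and to \cite{gallotakahashi}), so in architecture you are on the same route: stationarity from translation equivariance of the construction in Theorem \ref{teo:cftp}, uniqueness by coupling an arbitrary compatible chain to the same uniforms, and loss of memory from the independence of $\sigma(U_j : j\le -k)$ and $\sigma(U_{-(k-1)},\ldots,U_0)$. Part (2) of your argument is complete and correct: the identities $\{|T[0]|<k\}=\{X_0^{(-(k-1))}\neq *\}$ and the $\sigma(U_j : j\le -k)$-measurability of $X_{-\infty}^{-k}$ give exactly the cancellation of the coalesced term and the bound $\P(|T[0]|\ge k)$, and your deferral of the uniqueness coupling to \cite{desantispiccioni} and \cite{cff} is the same deferral the paper makes.

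The one step that would fail as literally written is the compatibility claim in part (1). You assert that the threshold accumulated for a symbol $g$ converges to $p(g\,|\,X_{-\infty}^{n-1})$ because it ``increases monotonically through the values $\alpha(g\,|\,X_{-n}^{n-1})$''. Two corrections are needed. First, by Definition \ref{def:algo1} the accumulated threshold after $k$ steps is $\alpha(g\,|\,X_{-1}^{(-k)}X_{-2}^{(-k)}\cdots X_{-k}^{(-k)})$, conditioned on the \emph{temporary} values, which may still contain stars. Second, and more importantly, monotonicity only yields $\lim_k \alpha(g\,|\,x_{-k}^{-1})\le p(g\,|\,x_{-\infty}^{-1})$; equality for all histories is precisely the continuity condition \eqref{eq:continuitycanonical}, which this paper pointedly does \emph{not} assume (avoiding it is one of its main selling points over \cite{gallotakahashi}), and for a discontinuous kernel the inequality can be strict. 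What closes the argument is a squeeze that uses the hypothesis of the proposition: $\P(|T[0]|<\infty)=1$ means that, conditionally on $\sigma(U_j : j\le -1)$, the star symbol receives zero mass, i.e. $\lim_k \beta\bigl(X_{-1}^{(-k)}\cdots X_{-k}^{(-k)}\bigr)=1$ almost surely; since each limit $\lim_k \alpha\bigl(g\,|\,X_{-1}^{(-k)}\cdots X_{-k}^{(-k)}\bigr)$ is bounded above by $p(g\,|\,X_{-\infty}^{-1})$ (Remark \ref{rem:monotone}, together with $X_{-\infty}^{-1}\in\H$ a.s., which the construction guarantees because every assigned symbol comes from an interval of positive length), and since both families sum to $1$ over $g\in\A$, the termwise inequalities must be equalities almost surely. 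You flagged this step as delicate, but flagging is not closing: as stated, your justification silently invokes a continuity property that the kernel need not have, and the fix is this normalization argument rather than any refinement of the monotonicity.
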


\begin{proposition} \label{prop:consequences2}
Suppose that $\P(|T[0,+\infty)|<\infty)=1$. Then the sequence $(\mathbf{1}\{T[n,+\infty)=n\})_{n\in \Z}$ is a stationary renewal process. This implies that $(X_n)_{n\in \Z}$ has a regeneration scheme.
\end{proposition}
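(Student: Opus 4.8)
The plan is to identify the regeneration times explicitly, recast them through a stationary auxiliary sequence, and then extract the renewal structure from the independence of the driving uniforms on disjoint time windows. Set $L_n := n - T[n] \ge 0$, the backward look-back length at time $n$. Since $-T[n]$ is a stopping time for $({\mathcal F}_k^{(n)})$, both $T[n]$ and $L_n$ are functions of $(U_j)_{j\le n}$ only. Writing $D_n := n - T[n,\infty) \ge 0$ and using $T[n,\infty)=\inf_{k\ge n}T[k]$, one gets the identity $D_n = \sup_{j\ge 0}(L_{n+j}-j)$ (equivalently the backward recursion $D_n=\max(L_n,D_{n+1}-1)$). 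The regeneration times are then exactly $\{n:T[n,\infty)=n\}=\{n:D_n=0\}$, so the indicator in the statement is $R_n:=\mathbf{1}\{T[n,\infty)=n\}=\mathbf{1}\{D_n=0\}$. Because the construction is driven by the i.i.d. sequence $(U_n)$, the processes $(L_n)$ and $(D_n)$ are stationary under the shift, $(D_n)$ is finite a.s. by hypothesis, and hence $(R_n)_{n}$ is a stationary $\{0,1\}$-valued sequence; this already yields the stationarity claim.

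Next I would check that regeneration times actually occur, namely $\P(D_0=0)>0$, and deduce by stationarity and Poincaré recurrence that $\{n:D_n=0\}$ is a.s. infinite in both directions. Here I would use that a single-step coalescence has positive probability in the relevant setting, i.e.\ $\P(L_0=0)=\beta>0$ for Algorithm \ref{algo:ps} and the analogous simultaneous-coalescence probability for Algorithm \ref{algo:ps2}, together with the finiteness $D_0=\sup_{j\ge0}(L_j-j)<\infty$, to exhibit with positive probability a time $n$ with $L_n=0$ and $L_{n+j}\le j$ for all $j\ge1$. I expect this positivity step to be a delicate point, since finiteness of $D_0$ alone does not force $\P(D_0=0)>0$; the positive single-step coalescence probability is what rules out a configuration with $L_n\ge 1$ for all $n$.

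The core is the renewal property, obtained from a measurability bookkeeping. The events $\{T[k]\ge n\}=\{X_k^{(n)}\ne *\}$ lie in $\sigma(U_n,\dots,U_k)$, so $R_n=\mathbf{1}\{T[k]\ge n\ \forall k\ge n\}$ is $\sigma(U_j:j\ge n)$-measurable, as are the future indicators $(R_{n+i})_{i\ge1}$ and the contents $(X_k)_{k\ge n}$ of blocks to the right of $n$. The key observation is that \emph{on} the event $\{R_n=1\}$, the constraints defining a past regeneration $R_m$ with $m<n$ for indices $i\ge n$ hold automatically (there $T[i]\ge n>m$), so on this event $R_m=\mathbf{1}\{T[i]\ge m,\ m\le i<n\}$, and the contents of blocks to the left of $n$ are $\sigma(U_j:j<n)$-measurable (no block straddles $n$, as $n$ is itself a regeneration endpoint). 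Since $\sigma(U_j:j<n)$ and $\sigma(U_j:j\ge n)$ are independent and $\{R_n=1\}\in\sigma(U_j:j\ge n)$, conditionally on $\{R_n=1\}$ the whole configuration to the left of $n$ is independent of the whole configuration to the right, and by stationarity the conditional law of the right configuration does not depend on $n$. This is the strong renewal property; applying it successively at the random regeneration times $\tau_i$ (decomposing over their possible values, each a right-measurable event) shows the gaps $\tau_{i+1}-\tau_i$ are i.i.d., so $(R_n)_n$ is a stationary renewal process.

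Finally, for the regeneration scheme for $(X_n)$: when $\tau_i\le k<\tau_{i+1}$ one has $T[k]\ge\tau_i$, so $X_k$ depends only on $U_{\tau_i},\dots,U_k$; hence the successive blocks $(X_k)_{\tau_i\le k<\tau_{i+1}}$ are functions of disjoint uniform windows and, combined with the renewal property, are i.i.d.\ across $i$, which is the asserted regeneration scheme. The main obstacles I anticipate are twofold: the positivity and infinitude of regeneration times above, and making the measurability swap on the event $\{R_n=1\}$ fully rigorous, including the passage from the one-point statement at deterministic $n$ to the strong property at the random times $\tau_i$; the latter is routine once the one-point statement is established, but the tracking of which uniforms each quantity depends on must be carried out carefully.
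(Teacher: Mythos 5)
Your overall architecture coincides with the standard argument that the paper itself points to (the paper in fact omits the proof of Proposition \ref{prop:consequences2}, deferring to \cite{cff} and \cite{gallo}): stationarity of the indicators holds because the whole construction is a shift-covariant factor of the i.i.d.\ field $(U_n)_{n\in\Z}$; the renewal property holds because $\{T[n,\infty)=n\}=\bigcap_{k\ge n}\{X_k^{(n)}\neq *\}$ lies in $\sigma(U_j:j\ge n)$ while everything built strictly before $n$ lies in $\sigma(U_j:j<n)$; and the i.i.d.\ inter-regeneration blocks follow by iterating this at successive regeneration times. Your measurability bookkeeping for this core step is correct, including the two key observations that on $\{R_n=1\}$ each past indicator $R_m$, $m<n$, coincides with a $\sigma(U_j:j<n)$-measurable variable, and that $X_k=X_k^{(n)}$ for $k\ge n$ on this event (by Proposition \ref{prop:monotone}).

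The genuine gap is the positivity step $\P(T[0,\infty)=0)>0$, which you rightly flag as delicate but then propose to settle by an argument that does not work. Ruling out the configuration $\{L_n\ge 1\ \forall n\}$ is far weaker than what is needed: since $\P(L_n=0)=\P(U_n<\beta)=\beta>0$, Borel--Cantelli already gives infinitely many $n$ with $L_n=0$ almost surely, yet each such $n$ may still be crossed by a \emph{later} window (some $j\ge 1$ with $L_{n+j}>j$), which is exactly what $\{T[n,\infty)=n\}$ must exclude; so positivity of the single-step coalescence probability alone proves nothing. The natural surgery (take $M$ with $\P(D_1\le M)$ close to $1$ and force $U_{1-M},\dots,U_0<\beta$) also fails as stated, because $\{D_1\le M\}\in\sigma(U_j:j\ge 1-M)$ depends on the very uniforms being modified, and the symbols output by the algorithm are not monotone under such a modification: replacing a star by a known symbol changes the partition of $[0,1)$ used at later times, so later symbols can change value, not merely become ``more determined''. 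A route that does work uses the paper's auxiliary chain: exactly as in the proof of part 3 of Theorem \ref{teo:infinitepath}, $\P(|T[0,\infty)|>n)=\P(\bar\eta\ge n)$, hence $\P(T[0,\infty)=0)=\P(Y_m\ne *\ \forall m\ge 0)=\lim_n\rho_n$. In the intended applications (Theorems \ref{teo:infinitepath} and \ref{teo:algops2_1}) one has $\lim_n\rho_n\ge c>0$ and positivity is immediate. Under the bare hypothesis of the proposition one can still conclude, but it needs an extra lemma: the hypothesis gives some $k$ with $\P(Y_m\ne *\ \forall m\ge k)>0$, hence a string $w\in\A_*^k$ with $\P(Y_0^{k-1}=w)>0$ and positive conditional probability of never producing $*$ afterwards; using Remark \ref{rem:monotone} together with $\beta(\cdot)\ge\beta>0$, one fills the stars of $w$ one at a time with symbols of strictly positive $\alpha$, obtaining a fully known string which is reachable and whose avoidance probability is at least that of $w$, whence $\lim_n\rho_n>0$. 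Without some such argument your renewal point process could a priori be empty, in which case neither the renewal statement nor the regeneration scheme has any content. (For Algorithm \ref{algo:ps2} the same issue arises with $\tilde\rho_n$ in place of $\rho_n$, and there $\beta=0$, so the fix cannot invoke $\beta>0$ at all.)
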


The following concentration of measure result is due to Corollary $1$ of \cite{gallotakahashi}. Let $f:\A^n \to \R$ be measurable. Define $\delta f=(\delta_1 f, \ldots, \delta_n f)$, where for any $j=1,\ldots, n$,
$$
\delta_jf=\sup\{|f(a_{1}^n)-f(b_{1}^n)|:a_i=b_i \text{ for } i\neq j\}.
$$

\begin{proposition} \label{prop:gallotakahashi}
Suppose that the alphabet $\A$ is finite and $\E(|T[0]|)<\infty$. Then the following holds.
For all $\epsilon>0$ and all functions $f:\A^n \to \R$,
$$
\P(|f(X_1^n)-\E[f(X_1^n)]|>\epsilon)\leq 4 \exp\left\{\frac{-2\epsilon^2}{9(1+\E(T[0]))^2||\delta f||^2_{\ell_2(\mathbb{N})}}.
\right\}
$$
\end{proposition}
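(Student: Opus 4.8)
The plan is to reduce the statement to a concentration inequality for a function of the independent family $(U_n)_{n\in\Z}$ and then to run a bounded-difference (Azuma--Hoeffding) martingale argument, exactly in the spirit of Corollary 1 of \cite{gallotakahashi}. First I would invoke the coupling representation of Theorem \ref{teo:cftp}: since $\E(|T[0]|)<\infty$ forces $|T[0]|<\infty$ a.s., and hence by stationarity (Propositions \ref{prop:consequences} and \ref{prop:consequences2}) $|T[i]|<\infty$ a.s. for every $i$, there is a coupling function $F$ with $X_i=F(U_i,U_{i-1},\ldots,U_{T[i]},\ldots)$ a.s., where crucially $X_i$ does not depend on any $U_j$ with $j<T[i]$. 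Because $\A$ is finite, $f(X_1^n)$ is a bounded measurable function $\Phi=\Phi((U_j)_{j\le n})$ of the i.i.d.\ uniforms. This is the step that converts a statement about the dependent chain into one about independent variables.

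Next I would set up a martingale by revealing the uniforms in decreasing order of index: put $\mathcal{G}_j=\sigma(U_k:k\ge j)$ and $\Delta_j=\E[\Phi\mid\mathcal{G}_j]-\E[\Phi\mid\mathcal{G}_{j+1}]$, so that $\Phi-\E\Phi=\sum_{j\le n}\Delta_j$ telescopes into a sum of martingale differences (note $\E[\Phi\mid\mathcal{G}_{n+1}]=\E\Phi$ since $\Phi$ is independent of $(U_k)_{k>n}$). The influence is then controlled coordinate by coordinate: modifying $U_j$ can change $X_i$ only when $T[i]\le j\le i$, and in that event the change of $f$ is at most $\delta_i f$. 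Hence, up to the smoothing effected by the conditional expectations, $|\Delta_j|$ is bounded by $\sum_{i=1}^n \delta_i f\,\mathbf{1}\{T[i]\le j\le i\}$.

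I would then apply the bounded-difference inequality. The combinatorial heart of the estimate is that, by stationarity, $i-T[i]$ is distributed as $|T[0]|$, so each coordinate $X_i$ depends on $i-T[i]+1$ of the uniforms, with mean $1+\E(|T[0]|)$; dually, the expected number of coordinates influenced by a fixed $U_j$ equals the same quantity. Summing the squared martingale differences and applying Cauchy--Schwarz to pass from the $\ell^1$ aggregation of the $\delta_i f$ over the influenced coordinates to the $\ell^2$ norm $\|\delta f\|_{\ell_2(\N)}$ yields the factor $(1+\E(|T[0]|))^2\|\delta f\|_{\ell_2(\N)}^2$ in the denominator, while the Hoeffding/Azuma constants account for the $2$ in the exponent and the prefactor $4$.

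The \emph{main obstacle} is that the range of dependence $i-T[i]$ is random and unbounded, so the bounded-difference ``constants'' are themselves random and a naive McDiarmid bound does not apply directly. The standard remedy, which I would follow, is to truncate the look-back at a level $L$, treat separately the high-probability event on which all relevant $|T[i]|$ stay below $L$ and its complement, optimize over $L$, and control the error through the tail $\P(|T[0]|>L)$; finiteness of $\E(|T[0]|)$ guarantees the required summability and, after this optimization, produces the explicit numerical constant $9$ in $9(1+\E(T[0]))^2$. Since this is precisely the argument of Corollary 1 of \cite{gallotakahashi} applied to our coupling function $F$, the remaining estimates are routine and I would omit them.
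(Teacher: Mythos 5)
Your opening reduction is, in fact, the entirety of what the paper does: Proposition \ref{prop:gallotakahashi} is not proved in the paper at all, but imported from Corollary 1 of \cite{gallotakahashi}, which holds for \emph{any} finite-alphabet process admitting a coupling-from-the-past representation with finite mean coalescence time; the only thing the paper needs to supply is that Theorem \ref{teo:cftp} (together with $\E(|T[0]|)<\infty$, which gives $|T[n]|<\infty$ a.s.\ for all $n$) provides exactly such a representation. Your martingale skeleton --- revealing the $U_j$ backwards, noting that changing $U_j$ can affect $X_i$ only on $\{T[i]\le j\le i\}$ (valid, since $\{T[i]\ge j+1\}=\{X_i^{(j+1)}\neq *\}$ depends only on $U_{j+1},\ldots,U_i$), and extracting the factor $(1+\E(|T[0]|))^2\|\delta f\|^2_{\ell_2(\N)}$ from $\sum_{k\ge 0}\P(|T[0]|\ge k)=1+\E(|T[0]|)$ --- is also the correct skeleton of the argument behind that corollary.

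The genuine gap is your proposed remedy for the ``main obstacle''. A truncation at level $L$ plus a union bound cannot yield the stated inequality under the sole hypothesis $\E(|T[0]|)<\infty$. The cost of truncation is an \emph{additive} error $\P(\exists\, i\le n:\ i-T[i]>L)\le n\,\P(|T[0]|>L)$, and with only a first moment available Markov's inequality gives $\P(|T[0]|>L)\le \E(|T[0]|)/L$; forcing this additive term below the purely multiplicative right-hand side $4\exp\{-2\epsilon^2/(9(1+\E(|T[0]|))^2\|\delta f\|^2_{\ell_2(\N)})\}$ requires $L$ of order $n$ (up to $\epsilon$-dependent constants), at which point the bounded differences of the truncated function, $c_j=\sum_{i=j}^{j+L}\delta_i f$, satisfy only $\sum_j c_j^2\le (L+1)^2\|\delta f\|^2_{\ell_2(\N)}\approx n^2\|\delta f\|^2_{\ell_2(\N)}$, ruining the exponent; moreover, restricting to the truncation event destroys the product structure that McDiarmid/Azuma requires, so one must pass to a modified function, which does not repair this quantitative loss. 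The actual proofs in the cited literature use no truncation: the martingale increments are bounded by the \emph{deterministic} convolution-type quantities $\delta_j f+\sum_{i>j}\delta_i f\,\P(|T[0]|\ge i-j)$ --- obtained because the disagreement event $\{T[i]\le j\}$ does not involve $U_j$, and after taking conditional expectations (or a supremum over pasts in the coupling-matrix formulation) only its \emph{probability} survives --- and then Azuma--Hoeffding together with Young's convolution inequality gives the $(1+\E(|T[0]|))^2\|\delta f\|^2_{\ell_2(\N)}$ denominator with no additive remainder. So your claim that truncation-plus-optimization ``produces the explicit numerical constant $9$'' is unsupported, and the route as you describe it would fail; replacing that step by the deterministic increment bound above (or simply citing Corollary 1 of \cite{gallotakahashi} as the paper does) closes the argument.
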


Section 8 of \cite{gallogarcia} describes more consequences of the existence of a perfect simulation algorithm such as error bounds for the coupling of chains with infinite memory and Markov chains (see \cite{gallotakahashi2}), the control of the decay of correlations and functional central limit theorems. 

\section{Examples} \label{sec:examples}

Before giving the proofs of our main results, we present some examples. Examples \ref{ex:autoregressive},  \ref{ex:imitation} and \ref{ex:other} and Remark \ref{rem:imitationexample} deal with probability kernels for which the assumptions of Algorithm \ref{algo:ps} hold. Example \ref{ex:algo2}  and Remark \ref{remark:examplealgo2} deal with probability kernels for which the assumptions of Algorithm \ref{algo:ps2} hold. 
In Section \ref{sec:dicussion} we will come back to these examples to compare our results with previously obtained results in the literature. In the following, for any set $A$, $\mathbf{1}_A(\cdot)$ denotes the indicator function of $A$.

\begin{example} \label{ex:autoregressive}
Consider a collection of non-negative numbers $\{\theta_j:j\geq 0\}$  such that $\sum_{j=0}^{\infty}\theta_j=1$. Let us consider the linear autoregressive binary model (see \cite{autoregressive}) taking values in $\A=\{0,+1\}$ such that for any $x_{-\infty}^{-1}\in \H=\A^{\N}$, the probability kernel is  given by
$$
p(1|w_{-\infty}^{-1})= \theta_0(1-\delta) +\sum_{j=1}^{\infty}\theta_jw_{-j}.
$$
Here, $\delta \in (0,1)$. In this example, the state of the process at a time is either chosen independently of the past with probability $\theta_0$ or it is the copy of the symbol $k$ steps in the past with probability $\theta_k$.

Assume $\theta_0>0$. For this kernel, we have $\beta=\theta_0$ and for any $w_{-n}^{-1} \in \A^n, n\geq 1$, we have $\beta(w_{-n}^{-1})=\sum_{j=0}^{n}\theta_j$. This implies that for $n\geq 1$, by denoting $s_n=\theta_{n}+\theta_{n+1}+\ldots$ we have that
$$
\rho_n= \prod_{j=1}^{n}(1-s_j).
$$
Note that this is the probability of first choosing a spontaneous symbol, then either choosing a spontaneous symbol or copying the symbol one step in the past, then  either choosing a spontaneous symbol or copying a symbol at most two steps back in the past, and so on.  
If we suppose that
\begin{equation}\label{eq:autoregressive}
    \lim_{n\to \infty} \prod_{j=1}^{n}(1-s_j)>0,
\end{equation}
then Theorem \ref{teo:infinitepath} holds.
The condition above is equivalent to
$
\sum_{m=1}^{\infty} \ln (1-s_m)> -\infty.
$
Since $s_n\to 0$ as $n\to \infty$, this is equivalent to
$
\sum_{m=1}^{\infty}s_m< \infty.
$
Therefore, we have to assume that the mean memory length  is finite. Here, by {\it memory length} we mean the number of steps we look back in the past to copy the symbol at that past time.  

A condition that is sufficient to ensure that Theorem \ref{teo:renewal} holds is the following
$$
\sum_{n=1}^{\infty}\prod_{j=1}^{n}(1-s_j)=+\infty.
$$
By noting that
$$
\frac{\prod_{j=1}^{n+1}(1-s_j)}{\prod_{j=1}^{n}(1-s_j)}=(1-s_{n+1}),
$$
by Raabe's test for infinite series (see \cite{infiniteseries}), we have that Theorem \ref{teo:renewal} holds if there exists $n_0\in \{1,2,\ldots\}$ and $\epsilon >0$ such that
$$
s_n\leq \frac{(1-\epsilon)}{n},
$$
for all $n\geq n_0$. Therefore, Theorem \ref{teo:renewal} holds even in some cases in which the mean memory length is not finite.
\end{example}

The following two examples are imitation models (see \cite{cff,imitation_noise}) with memory of variable length.

\begin{example} \label{ex:imitation}

Consider $\A=\{1,2,\ldots\}$ and let $ \{c_g, g \in \A \}$  be a collection of nonnegative numbers such that $c=\sum_{g\in \A}c_g <1.$ For any $x_{-\infty}^{-1}\in \H=\A^{\N}$ and for any $g\in \A$, let
\begin{equation}\label{eq:bramson-kalikow}
p(g|x_{-\infty}^{-1})=c_g+(1-c)\frac{\displaystyle \sum_{k=1}^{x_{-1}}\mathbf{1}\{x_{-k}=g\}}{x_{-1}}.
\end{equation}
In this example, the state of the process at a time is either chosen independently of the past with probability $c$ or it depends on the past symbols with probability $1-c$. In the first case, symbol $g\in \A$ is chosen with probability $c_g/c$. In the second case, we choose uniformly and copy one of the last $x_{-1}$ symbols of the past, where $x_{-1} \in \A$ is the first symbol in the past of the process.

Clearly, 
\begin{multline*}
\sum_{a_{-n}^{-1}\in \A^n}\alpha(a_{-n})\alpha(a_{-(n-1)}|a_{-n})\ldots \alpha(a_{-1}|a_{-n}^{-2})\geq \\ 
\sum_{a_{-n}^{-1}:a_{-n+k-1}\leq k, k=1,\ldots, n }\alpha(a_{-n})\alpha(a_{-(n-1)}|a_{-n})\ldots \alpha(a_{-1}|a_{-n}^{-2}) \geq \\ 
c_1(c_1+c_2+(1-c))\ldots (c_1+\ldots +c_n+(1-c)).
\end{multline*}
Indeed, the right-hand side of the above inequality gives the probability of first choosing a spontaneous symbol which equals $ 1.$ Thus, if in the next step we either sample a spontaneous symbol that equals either $ 1 $ or $ 2 $ or if we decide, with probability $ 1 -c , $ to look into the past, then the memory length, which is given by the first symbol, is given by $1$. This argument can then be iterated. 
Note that the above event is a sort of ladder event $ X_1 = 1,$ followed by $ X_2 \le 2,$ $ X_3 \le 3 $ ($X_3 = 1 $ is a possible choice) and so on.

If we suppose that
\begin{equation}\label{eq:houseofcards}
    \lim_{n\to \infty} c_1(c_1+c_2+(1-c))\ldots (c_1+\ldots +c_n+(1-c))>0,
\end{equation}
that is, infinite ladder events have a positive probability, then Theorem \ref{teo:infinitepath} holds.

By denoting $s_m=c_m+c_{m+1}+\ldots,$ and proceeding as in Example \ref{ex:autoregressive}, the condition above is equivalent to
$$
\sum_{m=1}^{\infty} s_m< \infty.
$$
Therefore, we have to assume that the mean value of the spontaneously generated symbol (which also determines the memory length in this example) is finite.

As in Example \ref{ex:autoregressive}, a condition that is sufficient to ensure that Theorem \ref{teo:renewal} holds is 
that there exists $n_0\in \{1,2,\ldots\}$ and $\epsilon >0$ such that
$$
s_n\leq \frac{(1-\epsilon)}{n},
$$
for all $n\geq n_0$. This condition does not exclude the possibility of having
$$
\sum_{n\geq 1}s_n=+\infty,
$$
and therefore Theorem \ref{teo:renewal} holds even in some cases in which the mean value of the spontaneously generated symbol (and thus of the associated memory length) is not finite.

Note that in this example, we do not have a regeneration time which is a stopping time. By going forward in time, it is always possible to choose a memory length that goes back as many steps in the past as we want.

\end{example}

\begin{remark} \label{rem:imitationexample}
In Example \ref{ex:imitation}, the process uses only the last step to define the size of the memory. Moreover, it decides to copy uniformly one of the past symbols in the range of the memory length. This kernel can be easily generalized so that the presented results hold in a similar way.

Consider as a generalization the kernel
\begin{equation}\label{eq:bramson-kalikow3}
p(g|x_{-\infty}^{-1})=c_g+(1-c)\sum_{k=1}^{\infty}f_{x_{-1}}(k)\mathbf{1}\{x_{-k}=g\},
\end{equation}
for each $x^{-1}_{-\infty}\in \A^{\N}$. Here, $\sum_{k\geq 1}f_{x_{-1}}(k)=1$. 
For $m\geq 1$, we have  $\sum_{k=1}^{m}f_{m}(k)\geq d_m \in (0,1)$, with $d_m\to 1$ as $m\to \infty$.

This kernel generalizes the one in Example \ref{ex:imitation} in the following way. The position of the symbol which will be copied is chosen with a distribution which is not uniform. This distribution has support in $\{1,2,3,\ldots\}$ but it is concentrated in the most $x_{-1}$ recent symbols with high probability, where $x_{-1}$ is the first symbol in the past of the
process.

Now, we have
$$
\sum_{a_{-n}^{-1}\in \A^n}\alpha(a_{-n})\alpha(a_{-(n-1)}|a_{-n})\ldots \alpha(a_{-1}|a_{-n}^{-2})\geq c_1(c_1+c_2+(1-c)d_1)\ldots (c_1+\ldots c_n+(1-c)d_n) .
$$
As we did in Example \ref{ex:imitation}, we can consider $c_n \to 0$ and $d_n\to 1$ fast enough as $n\to \infty$ such that Theorems \ref{teo:infinitepath} and \ref{teo:renewal} hold.

\end{remark}

\begin{example} \label{ex:other}
Consider $\A=\{1,2,\ldots\}$ and let $ \{c_g, g \in \A \}$  be a collection of non negative numbers such that $c=\sum_{g\in \A}c_g <1.$ For each $k\geq 1$, let $(q_k(g):g\in \A)$ be a probability distribution. For any $x_{-\infty}^{-1}\in \H=\A^{\N}$ and for any $g\in \A$, let
\begin{equation}\label{eq:sec_example}
p(g|x_{-\infty}^{-1})=c_g+(1-c)q_{Y(x_{-\infty}^{-1})}(g),
\end{equation}
where 
$$
Y(x_{-\infty}^{-1})=\inf\left\{m\geq 1: \sum_{n=1}^mx_{-n} \leq \frac{m(m+1)}{2}\right\}.
$$
Recall that $\sum_{n=1}^mn=\frac{m(m+1)}{2}$.

In this example, the state of the process at a time is either chosen independently of the past with probability $c$ or it depends on the past symbols with probability $1-c$. In the first case, symbol $g\in \A$ is chosen with probability $c_g/c$. In the second case, we choose the new state using a probability distribution which depends only on $Y(x_{-\infty}^{-1})$. Note that for any $k\geq 1$ and for any $x_{-\infty}^{-1}$, to check if $Y(x_{-\infty}^{-1})=k$ we need to check only $x_{-k}^{-1}$, i.e., the first $k$ elements of $x_{-\infty}^{-1}$.

Let us suppose that $c_1>0$ and that for each $k\geq 1$, 
$$
\min\left\{\sum_{n=1}^{k+1}q_m(n):m\leq k\right\}\geq d_k,
$$
with $d_k \to 1$ as $k\to \infty$. This implies that for any $k\geq 1$ and $x_{-\infty}^{-1}$ satisfying $\sum_{n=1}^kx_{-n} \leq \frac{k(k+1)}{2}$, we have that
$$
\sum_{g\leq k+1}p(g|x_{-\infty}^{-1}) \geq c_1+\ldots +c_{k+1}+(1-c)\min\left\{\sum_{n=1}^{k+1}q_m(n):m\leq k\right\} \geq c_1+\ldots + c_{k+1}+(1-c)d_k.
$$
As a consequence, 
\begin{multline*}
\sum_{a_{-n}^{-1}\in \A^n}\alpha(a_{-n})\alpha(a_{-(n-1)}|a_{-n})\ldots \alpha(a_{-1}|a_{-n}^{-2})\geq \\ 
\sum_{a_{-n}^{-1}:a_{-n+k-1}\leq k, k=1,\ldots, n }\alpha(a_{-n})\alpha(a_{-(n-1)}|a_{-n})\ldots \alpha(a_{-1}|a_{-n}^{-2})\geq \\ 
 c_1(c_1+c_2+(1-c)d_1)\ldots (c_1+\ldots + c_n+(1-c)d_{n-1}).
\end{multline*}
Once more, we can find $c_n \to 0$ and $d_n\to 1$ fast enough as $n\to \infty$ such that the conditions of Theorems \ref{teo:infinitepath} and \ref{teo:renewal}  hold.

As in the preceding example, the first symbol in the past is necessary to obtain the memory length. If the actual symbol is not spontaneous, nor is the past symbol, there is no possibility to decide about the actual symbol by only looking to the spontaneous symbols of the past. 

For the same reasons as in Example \ref{ex:imitation}, we do not have a regeneration time which is a stopping time. 
    
\end{example}


\begin{example} \label{ex:algo2}
Consider a collection of non-negative numbers $\{\theta_j:j\geq 0\}$  such that $\sum_{j=0}^{\infty}\theta_j=1$. Suppose that $\theta_0>0$. Consider a model taking values in $\A=\{0,1,2,3\}$ with the convention $3+1=0$ and $0-1=3$. Let 
$$
\H=\{a_{-\infty}^{-1}\in \A^{\N}:a_{-n+1}\in \{a_{-n}-1,a_{-n},a_{-n}+1\}, \text{ for all } n \geq 2\}.
$$
For any $w_{-\infty}^{-1}\in \H$, the probability kernel is given as follows. For any $w_{-\infty}^{-1}\in \H$ and for $g=w_{-1}$,
$$
p(g|w_{-\infty}^{-1})= \frac{1}{3}\theta_0 +\sum_{j=1}^{\infty}\theta_j
\mathbf{1}_{\{w_{-j}, w_{-j}+2 \}}(g).
$$
For $g\in \{w_{-1}-1,w_{-1}+1\}$
$$
p(g|w_{-\infty}^{-1})= \frac{1}{3}\theta_0
+\sum_{j=1}^{\infty}\theta_j\mathbf{1}_{\{w_{-j}\}}(g).
$$
Finally, we define $p(g|w_{-\infty}^{-1})=0$ for $g=w_{-1}+2$.
By construction, the process can only stay on the same symbol or jump to one of the neighbors symbols. 

Observe that for $g,b\in \A$ such that $b\neq g+2$,
$$
\alpha(b|g)=\frac{\theta_0}{3}\mathbf{1}_{\{g-1,g,g+1\}}(b)+\theta_1\mathbf{1}_{\{g\}}(b).
$$
Recalling Definition \ref{def:ptildeandpmarkov}, we have that
$$
\beta_1=\theta_0+\theta_1
$$
and for any $g,b\in \A$ such that $b\neq g+2$,
$$
p_{Markov}^{[1]}(b|g)=\frac{1}{3}\frac{\theta_0}{\theta_0+\theta_1}\mathbf{1}_{\{g-1,g,g+1\}}(b)+\frac{\theta_1}{\theta_0+\theta_1}\mathbf{1}_{\{g\}}(b).
$$
This implies that Assumption \ref{as:algo2} holds with $\hat n=1$. Finally, note that $n_0=2$.

For this kernel, we have $\beta=0,$ and for any $w_{-n}^{-1} \in \A^n, n\geq 1$, $\beta(w_{-n}^{-1})=\sum_{j=0}^{n}\theta_j$. This implies that for $n\geq 1$, by denoting $s_n=\theta_{n}+\theta_{n+1}+\ldots,$ we have that
$$
\tilde{\rho_n}= \prod_{j=1}^{n+1}(1-s_{j}).
$$
By recalling Example \ref{ex:autoregressive}, we have that Theorem \ref{teo:algops2_1} holds if we suppose that
$$
\sum_{m=1}^{\infty}s_m< \infty ,
$$ 
and Theorem \ref{teo:algops2_2} holds if we suppose that there exists $m\in \{1,2,\ldots\}$ and $\epsilon >0$ such that
$$
s_n\leq \frac{(1-\epsilon)}{n}
$$
for all $n\geq m$.
\end{example}

\begin{remark} \label{remark:examplealgo2}
We consider the following generalization of the above example. Consider a connected and non-directed graph $(V,E)$, where $V$ is a finite set of vertices and $E$ is the set of edges.  For any $v\in V,$ let $E(v)$ be the set of vertices connected to $v$ by an edge. We consider the convention $v \in E(v)$.

Consider moreover a collection of non-negative numbers $\{\theta_j:j\geq 0\}$  such that $\sum_{j=0}^{\infty}\theta_j=1$. Suppose that $\theta_0>0$. Consider a model taking values in $\A=V$ and let
$$
\H=\{a_{-\infty}^{-1}\in \A^{\N}:a_{-n+1}\in E(a_{-n}), \text{ for all } n \geq 2\}.
$$
For any $x_{-\infty}^{-1}\in \H$, the probability kernel is given as follows. For any $w_{-\infty}^{-1}\in \H$ and for $g=w_{-1}$,
$$
p(g|w_{-\infty}^{-1})= \frac{1}{|E(w_{-1})|}\theta_0 +\sum_{j=1}^{\infty}\theta_j
\mathbf{1}_{E(w_{-1})^c\cup\{w_{-1}\}}(w_{-j}).
$$
For $g\in E(v)\setminus \{w_{-1}\}$,
$$
p(g|w_{-\infty}^{-1})= \frac{1}{|E(w_{-1})|}\theta_0
+\sum_{j=1}^{\infty}\theta_j\mathbf{1}_{\{w_{-j}\}}(g).
$$
Finally, we define $p(g|w_{-\infty}^{-1})=0$ for $g\not \in E(w_{-1})$. 

This kernel represents a process that can only jump to its neighbors on the graph $(V,E)$. However, the probability of jumping to each neighbor depends on the entire history of the process in a similar way to Example \ref{ex:algo2}. With assumptions similar to those made in Example \ref{ex:algo2}, we can use Algorithm \ref{algo:ps2} for this kernel with $\hat n =1$.
\end{remark}

\section{Proof of Theorems \ref{teo:cftp} and \ref{coro:cftp}} \label{sec:proofteocftp}

In the following $(U_n)_{n\in \Z}$ are i.i.d. uniform random variables in $[0,1)$. 
We will construct a function $F$ such that $X_n=F(U_n,U_{n-1}, \ldots)$ in such a way that $U_{n-1}, U_{n-2}, \ldots$ defines a partition of the interval $[0,1)$, associating each part of the interval to an element of $\A$, and $X_n$ will be the defined taking into account to which of these intervals $U_n$ belongs.


To define the coupling function $F(u_0,u_{-1},\ldots)$ for $u_0, u_{-1}, \ldots \in [0,1)$ we need to define the partition of the interval $[0,1)$ cited above. 
The intervals belonging to this partition are denoted
$$
I_0(g), \text{ for } g \in \A
$$
and
$$
I_m(g,  u_{-1},u_{-2},\ldots , u_{-m}) = I_m(g,  u_{-m}^{-1}) \text{, for } m\geq 1 \text{ and } g\in \A.
$$
These intervals are used to define the coupling function in the following way 
\begin{equation}\label{eq:I_m}
F(u_{-\infty}^{0})=\sum_{g\in \A}g \mathbf{1}\left\{u_0 \in \bigcup_{m=0}^{\infty}I_{m}(g,u_{-m}^{-1})\right\}+*\mathbf{1}\left\{u_0\in [0,1)\setminus \bigcup_{m=0}^{\infty}I_{m}(g,u_{-m}^{-1})\right\}.
\end{equation}
Here and in the following we use the convention $I_0(b,u_{0},u_{-1})=I_0(b)$. 

Algorithm \ref{algo:ps} uses the notion of ``temporary'' values for $X_m$, $m\in \Z$. In the perfect simulation algorithm we check if $X_m$ can be obtained only by using the values of $U_m,\ldots, U_{m-n}$, for $n\geq 0$. This is formalized by the following notation.
For any $n\geq 0$,
\begin{equation}\label{def:tempvalues}
F^{(n)}(u^0_{-n}):=\sum_{g\in \A}g \mathbf{1}\left\{u_0 \in \bigcup_{m=0}^{n}I_{m}(g,u^{-1}_{-m})\right\}+*\mathbf{1}\left\{u_0\in [0,1)\setminus \bigcup_{m=0}^{n}I_{m}(g,u^{-1}_{-m})\right\} . 
\end{equation}
Note that for any $u_{-\infty}^{0}\in[0,1)^{\N}$, $F(u^0_{-\infty})=\lim_{n\to\infty}F^{(n)}(u^0_{-n})$.

The intervals $
(I_n(g, u_{-n}^{-1}), n\geq 0, g\in \A) $  will be defined by means of a collection of positive numbers
$$
J_n(g,u_{-n}^{-1}): \A\times [0,1]^n \to [0,1]
$$ 
taking values in $ [0, 1]  $ in the following way:
for any $n\geq 0$ and $g\in \A$, 
\begin{equation}\label{eq:Ing}
    I_n(g, u_{-n}^{-1})= \left[\sum_{m=0}^{n-1}\sum_{b\in \A}J_m(b, u_{-m}^{-1})
 +\sum_{b< g}J_n(b,u_{-n}^{-1}), \sum_{m=0}^{n-1}\sum_{b\in \A}J_m(b,u_{-m}^{-1})
 +\sum_{b\leq g}J_n(b,u_{-n}^{-1})\right).
\end{equation}
Note that $|I_n(g,u_{-n}^{-1})|=J_n(g,u_{-n}^{-1})$. Note also that for the case $n=0$, the definition above reads
$$
I_0(g)= \left[\sum_{b< g}J_0(b), \sum_{b\leq g}J_0(b)\right).
$$

Before giving the definition of $(J_n(g,  u_{-n}^{-1}), n\geq 0, g\in \A)$, let us stress that since all $ J_n(g,u_{-n}^{-1}) $ are non negative (as will be proved in Proposition \ref{prop:monotone}), in \eqref{eq:I_m}, for any given sequence $u_0,u_{-1}, \ldots,$ there exist at most one index $ m \geq 0 $ and one value $g$ such that the condition $u_0 \in I_{m}(g,u_{-m}^{-1})$ is fulfilled.  
Therefore, $I_n(g,u_{-n}^{-1})$ is a right-open sub-interval of $[0,1)$ (including the empty set) for any $g\in \A$.

In what follows, for any $j \in \Z,$ we shall also consider shifted versions $F^{(n)}(u^j_{j-n})$ of $F^{(n)}(u^0_{-n}) ,$ $ I_n(g, u_{j-n}^{j-1})$ of $ I_n(g, u_{-n}^{-1}) $ and $J_n(g, u_{j-n}^{j-1})$ of $ J_n(g, u_{-n}^{-1}),$ which are obtained by applying the respective definitions of $F^{(n)}, I_n $ and $ J_n$ to $ u^j_{j-n} $ rather than to $ u^0_{-n}.$

Let us now give the definition of the successive values of $ J_n (g, U_{- n }^{-1}) .$
We take $J_0(g)\mydef\alpha(g)$ and define recursively for any $n\geq 1$,
\begin{multline} \label{eq:jngalternativeform}
J_{n}(g,u_{-n}^{-1})\mydef
\alpha(g|F^{(n-1)}(u_{-n}^{-1})F^{(n-2)}(u_{-n}^{-2})\ldots F^{(0)}(u_{-n}))-\\\alpha(g|F^{(n-2)}(u_{-(n-1)}^{-1})F^{(n-3)}(u_{-(n-1)}^{-2})\ldots F^{(0)}(u_{-(n-1)})) .
\end{multline}
Note that by construction, 
$$
J_{n}(g,u_{-n}^{-1})=\alpha(g|F^{(n-1)}(u_{-n}^{-1})F^{(n-2)}(u_{-n}^{-2})\ldots F^{(0)}(u_{-n}))-\sum_{m=0}^{n-1}J_{m}(g,u_{-m}^{-1}).
$$

\begin{remark}\label{remark:explainJng} 
Recalling Algorithm \ref{algo:ps}, the term defined in \eqref{eq:jngalternativeform} satisfies for any $g\in \A$ and for any $n\geq 1$,
$$
J_{n}(g,U^{-1}_{-n})=\alpha(g|X_{-1}^{(-n)}\ldots X_{-n}^{(-n)})-\alpha(g|X_{-1}^{-(n-1)}\ldots X_{-(n-1)}^{-(n-1)}).
$$  
The positive random variable $J_{n}(g,U^{-1}_{-n})$ quantifies how much the minimal probability of having the symbol $g$ at time $0$ increases when we consider the temporary values $X_{-1}^{(-n)},\ldots, X_{-n}^{(-n)}$,  which depend on the random variables $U_{-n}^{-1}$, instead of considering $X_{-1}^{-(n-1)},\ldots, X_{-(n-1)}^{-(n-1)}$, which depend on the random variables $U_{-(n-1)}^{-1}$. In other words, it quantifies how much the minimal probability of observing the symbol $g$ at time $0$ increases when we additionally consider the uniform random variable $U_{-n}$. 
\end{remark}

\begin{remark} The sequential construction of the functions $ F^{(n)} (u_{-n}^0)$ given in (\ref{def:tempvalues}) above is well defined. Indeed, to obtain $ F^{(n)} (u_{-n}^0), $ one only needs to know all 
intervals $ I_m ( g, u^{- 1}_{-m}) , m \le n .$ And in turn, to define these intervals through their respective lengths in (\ref{eq:Ing}) and (\ref{eq:jngalternativeform}), we see that we only rely on previously chosen functions $ F^{(n-1)}, \ldots, F^{(0)}, $ which are applied to substrings of $ u_{-n}^{-1}.$ 
\end{remark}

The following proposition is a direct consequence of the definitions above. 

\begin{proposition}\label{prop:monotone} 
Let $u_{-\infty}^0\in [0,1)^{+\infty}$. Suppose there exists $ n \geq 0 $ such that $ F^{(n)}(u_{-n}^0) = b$ for some $b \in \A.$ Then $ F^{(n+k)}(u_{-(n+k)}^0) = b$ for all $k\geq 1$. In particular, $J_n (g) \geq 0$ for all $ n \geq 0$ and $g\in \A$.
\end{proposition}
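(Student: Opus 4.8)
The plan is to prove the two assertions simultaneously by induction on the level $n$, because they are intertwined: the monotonicity of $F^{(\cdot)}$ at levels up to $n-1$ is exactly what forces $J_n(g)\ge 0$, and once $J_n(g)\ge 0$ is known the level-$n$ intervals become genuine right-open subintervals, which in turn yields the monotonicity at level $n$. Concretely, for $N\ge 0$ I would prove the statement $P(N)$: \emph{$J_m(g,u_{-m}^{-1})\ge 0$ for all $m\le N$ and all $g\in\A$, and whenever $F^{(m)}(u_{-m}^0)=b\in\A$ for some $m\le N$, one has $F^{(m')}(u_{-m'}^0)=b$ for all $m\le m'\le N$.} The base case $P(0)$ is immediate, since $J_0(g)=\alpha(g)\ge 0$ and the monotonicity claim is vacuous.

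For the inductive step, assume $P(N-1)$. First I would establish $J_N(g)\ge 0$. Reading off \eqref{eq:jngalternativeform} (see also Remark \ref{remark:explainJng}), write $J_N(g,u_{-N}^{-1})=\alpha(g\mid w_{-N}^{-1})-\alpha(g\mid v_{-(N-1)}^{-1})$, where $w_{-j}=F^{(N-j)}(u_{-N}^{-j})$ for $j=1,\dots,N$ and $v_{-j}=F^{(N-1-j)}(u_{-(N-1)}^{-j})$ for $j=1,\dots,N-1$. The key observation is that, at the shifted position $-j$, the value $v_{-j}$ is obtained at a strictly lower level than $w_{-j}$, so the monotonicity part of $P(N-1)$ gives: if $v_{-j}\in\A$ then $w_{-j}=v_{-j}$. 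Hence $v_{-j}\in\{*,w_{-j}\}$ for every such $j$; appending a trailing $*$ at position $-N$ (which leaves $\alpha$ unchanged by Remark \ref{rem:monotone}) produces two strings of equal length in which each coordinate of the $v$-string lies in $\{*,w_{-j}\}$. Remark \ref{rem:monotone} then yields $\alpha(g\mid w_{-N}^{-1})\ge\alpha(g\mid v_{-(N-1)}^{-1})$, that is, $J_N(g)\ge 0$.

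With $J_N(g)\ge 0$ in hand, I would prove the monotonicity at level $N$. The crucial point is that, for each fixed $m$, the interval $I_m(g,u_{-m}^{-1})$ defined in \eqref{eq:Ing} does not depend on $N$: through \eqref{eq:jngalternativeform} it only involves $F^{(0)},\dots,F^{(m-1)}$ applied to substrings of $u_{-m}^{-1}$, so it is intrinsic. Since $J_m(g)\ge 0$ for all $m\le N$, these intervals are pairwise disjoint right-open intervals arranged consecutively by level, so the union $\bigcup_{m=0}^{N}I_m(b,u_{-m}^{-1})$ contains $\bigcup_{m=0}^{N-1}I_m(b,u_{-m}^{-1})$ and no point is ever reassigned to a different symbol. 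Thus if $F^{(m)}(u_{-m}^0)=b$ for some $m\le N-1$, then $u_0$ already lies in $\bigcup_{m'=0}^{m}I_{m'}(b,u_{-m'}^{-1})\subseteq\bigcup_{m'=0}^{N}I_{m'}(b,u_{-m'}^{-1})$, whence $F^{(N)}(u_{-N}^0)=b$; the case $m=N$ is trivial. This proves $P(N)$, and the proposition follows by applying $P(n+k)$ for each $k\ge 1$.

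The main obstacle is the apparent circularity between non-negativity and monotonicity, which is precisely why the two must be carried through a single induction rather than proved in isolation. The one genuinely substantive step is verifying $v_{-j}\in\{*,w_{-j}\}$, i.e.\ that looking one step further into the past can only turn an ``unknown'' $*$ into a definite symbol and can never alter a symbol already fixed; this is exactly the lower-level monotonicity supplied by $P(N-1)$. The remaining effort is bookkeeping: matching the shifted levels $F^{(N-j)}(u_{-N}^{-j})$ against $F^{(N-1-j)}(u_{-(N-1)}^{-j})$ and invoking the two comparison inequalities of Remark \ref{rem:monotone} (the trailing-$*$ identity and the $\{*,b\}$-coordinatewise comparison) in the correct order.
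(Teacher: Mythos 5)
Your proof is correct and rests on exactly the same two ingredients as the paper's: the first assertion comes from the fact that the unions $\bigcup_{m=0}^{n} I_m(b,u_{-m}^{-1})$ are increasing in $n$, and the second from \eqref{eq:jngalternativeform} combined with the monotonicity properties of Remark \ref{rem:monotone}. The paper's proof is two lines long and treats the two assertions as if they were independent; what you add is the explicit simultaneous induction showing that they are intertwined: non-negativity of $J_N$ needs the lower-level monotonicity (to know that $F^{(N-1-j)}(u^{-j}_{-(N-1)}) \in \{*, F^{(N-j)}(u^{-j}_{-N})\}$ for each coordinate, so that Remark \ref{rem:monotone} applies after appending the trailing $*$), while monotonicity at level $N$ needs non-negativity up to level $N$ (to guarantee the intervals of \eqref{eq:Ing} are pairwise disjoint, so that membership of $u_0$ in the $b$-union really forces $F^{(N)}(u^0_{-N})=b$ rather than an ambiguous assignment). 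This is precisely the circularity that the paper's terse argument leaves implicit, and your induction is the clean way to discharge it; in substance, however, the route is the same.
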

\begin{proof}
For the first assertion, just note that for any $k\geq 1$,
\begin{multline*}
\{F^{(n)}(u_{-n}^0) = b\}=\left\{u_{0} \in \bigcup_{j=0}^{n}I_{j}(b,u_{-j}^{-1})\right\} \subset \\ \left\{u_{0} \in \bigcup_{j=0}^{n+k}I_{j}(b,u_{-j}^{-1})\right\}=\{F^{(n+k)}(u_{-(n+k)}^0) = b\}.
\end{multline*}
The second assertion follows from \eqref{eq:jngalternativeform}
together with the monotonicity properties stated in Remark \ref{rem:monotone} above.
\end{proof}

Now we can prove Theorem \ref{teo:cftp}. 

\begin{proof}
The proof follows directly by the construction above by noticing that
for any $m\in \Z$ and for any $k\geq 0$,
\begin{equation*} 
X_{m}^{(m-k)}= F^{(k)}(U^m_{m-k})
\end{equation*}
and that 
$$ X_m = \lim_{n\to\infty}F^{(n)} ( U^m_{m-n})=F^{(T[m])} ( U^m_{T[m]} ).$$ 
From this the assertion follows. 
\end{proof}

The proof of Theorem \ref{coro:cftp} is analogous to the proof of Theorem \ref{teo:cftp} and therefore omitted. The construction of the functions $\tilde{F}^{(j)}: [0,1)^{\N} \to \A, \text{ for } j=0,\ldots, n_0-1,$ are analogous to the construction of the function $F$ above. Having constructed these functions, the proof of Theorem \ref{coro:cftp} follows as the the proof of Theorem \ref{teo:cftp} above.

\section{Proof of Theorems \ref{teo:infinitepath}, \ref{teo:renewal}, \ref{teo:algops2_1} and \ref{teo:algops2_2}} \label{sec:proofsalgo1}

Recall that according to Definition \ref{def:algo1}, for $k\geq 0,$ $X_0^{(-k)},\ldots,X_{-k }^{(-k)} 
\in \A_*$ are the values of $X_0,\ldots,X_{-k}$ after the first $k+1$ iterations of Algorithm \ref{algo:ps}, that is, based on the simulation of $ U_{-k }^0.$  For any $n \geq 0$,
\begin{multline} \label{eq:probt0geqn}
\P(|T[0]|>n)= 
\\
\sum_{a_{-n}^{-1}\in \A_*^n}\P(X_{-n}^{(-n)}=a_{-n})
\ldots \P(X_{-1}^{(-n)}=a_{-1}|(X^{(-n)})_{-n}^{-2}=a_{-n}^{-2})\P(X_{0}^{(-n)}=* |(X^{(-n)})_{-n}^{-1}=a_{-n}^{-1})
\\
=\sum_{a_{-n}^{-1}\in \A_*^n}\alpha(a_{-n})\alpha(a_{-(n-1)}|a_{-n})\ldots \alpha(a_{-1}|a_{-n}^{-2})\alpha(*|a_{-n}^{-1}).
\end{multline}
Note that in the case $n=0$ this reads
$$
\P(|T[0]|>0)=\alpha(*).
$$
In particular, 
$$
\E(|T[0]|)=\sum_{n=0}^{+\infty}\sum_{a_{-n}^{-1}\in \A_*^n}\alpha(a_{-n})\alpha(a_{-(n-1)}|a_{-n})\ldots \alpha(a_{-1}|a_{-n}^{-2})\alpha(*|a_{-n}^{-1}).
$$

We interpret the above formula in terms of an auxiliary chain 
$(Y_n)_{n\geq 0}$, 
assuming values in $\A_*$ and having increasing memory, which we introduce now. We start from 
$\P(Y_0=g)=\alpha(g)$ and then sequentially define 
$$
\P(Y_n=g|Y_0^{n-1}=a_0^{n-1})=\alpha(g|a_0^{n-1}),
$$
for any $g\in \A_*$ and $a_0^{n-1} \in \A_*^n$. 

\begin{remark}
The purpose of introducing this auxiliary chain is to compare the probability of events related to $(Y_n)_{n\geq 0}$ with the probability of events related to $(T[n]:n\in \Z)$. 

Note that for any $k\in \Z$ and $n\in \N$,
$$
Y_n \sim X_{k}^{(k-n)}.
$$
In particular,
$$
Y_n \sim X_{n}^{(0)}, \text{ for any } n\in \N.
$$
Therefore, $(Y_n)_{n\geq 0}$ has the law of a `forward in time' version of Algorithm \ref{algo:ps}, which instead of starting from a certain time and going backward in time, updating the values of the chain for each new information obtained, goes forward in time checking which values of the chain can be obtained from a certain time on. 
\end{remark}

We have that
\begin{equation}\label{eq:pn}
p_n:=\P(|T[0]|>n)=\P(Y_n=*)
\end{equation}
and
\begin{equation}\label{eq:ET0}
\E(|T[0]|)=\sum_{n\geq 0} \P(Y_n=*)=\E\left(\sum_{n\geq 0} \mathbf{1}\{Y_n=*\}\right).
\end{equation}
Moreover,  
$$
\P(|T[0,n]|>m) \leq \sum_{j=0}^n\P(|T[j]|>m)=\sum_{j=0}^n\P(|T[0]|>m+j).
$$
Therefore,
\begin{equation} \label{eq:t0nbound}
\P(|T[0,n]|>m)\leq \sum_{k=m}^{m+n}\P(Y_k=*)
\end{equation}
and
\begin{equation} \label{eq:t0n}
\E(|T[0,n]|) \leq \sum_{m\geq 0} \sum_{k=m}^{m+n}\P(Y_k=*)\leq (n+1) \E\left(\sum_{n\geq 0} \mathbf{1}\{Y_n=*\}\right).
\end{equation}

Now we can prove Theorem \ref{teo:infinitepath}.

\begin{proof}
Let
$$
\eta_1= \inf\{m\geq 0:Y_m=* \}
$$
with the convention $\inf \emptyset =\infty,$ and for $n\geq 1$, let
$$
\eta_{n+1}= \inf\{m\geq \eta_n+1:Y_m=*\},
$$
with the convention $\eta_{n+1}=\eta_{n}$ in the case $\eta_{n} =\infty$. Notice that 
\begin{equation}\label{eq:eta_1andrho}
\P ( \eta_1 \geq n ) =\P\left(\bigcap_{k=0}^{n-1}\bigcup_{a_k\in \A}\{Y_k=a_k\}\right) =  \sum_{a_{0}^{n-1}\in \A^n}\alpha(a_{0})\alpha(a_{1}|a_{0})\ldots \alpha(a_{n-1}|a_{0}^{n-2}) = \rho_n.
\end{equation}
By assumption, 
$$
\P(\eta_1 < \infty)=1- \lim_{n\to \infty}\P(\eta_1 \geq n) =1- \lim_{n\to \infty}\rho_n 
\leq 1-c.
$$
Now we introduce
$$ \Omega_n := \bigcup_{j=n-1}^{\infty}\left\{w_{1}^{j} \in  A_*^j: w_{j}=*, \sum_{k=1}^{j-1}\mathbf{1}\{w_k=*\}=n-2\right\} ,$$ 
the set of all possible past sequences corresponding to the event $\bigcap_{m=1}^{n-1}\{\eta_m<\infty\}.$ Then, 
\begin{multline*}
\P\left(\eta_n< \infty \Big|\bigcap_{m=1}^{n-1}\{\eta_m< \infty\}\right)=
\\
\sum_{w \in \Omega_n }\P(w)\left[ 1-\lim_{n\to \infty}\sum_{a_{0}^{n-1}\in \A^n}\alpha(a_{0}|w)\alpha(a_{1}|a_{0}w)\ldots \alpha(a_{n-1}|a_{0}^{n-2}w)\right].
\end{multline*}
By monotonicity (recall Remark \ref{rem:monotone}), we have that the term above is upper bounded by
$$
\sum_{w}\P(w)\left[1-\lim_{n\to \infty}\sum_{a_{0}^{n-1}\in \A^n}\alpha(a_{0})\alpha(a_{1}|a_{0})\ldots \alpha(a_{n-1}|a_{0}^{n-2})\right]\leq 1-c.
$$
This implies that
$$
\P(\eta_n < \infty)=\P(\eta_j < \infty,j=1,\ldots, n) \leq (1-c)^n.
$$
To conclude part 1 of Theorem \ref{teo:infinitepath} note that, since by (\ref{eq:ET0}),  $\E(|T[0]|)$ is the expected number of visits of the auxiliary chain to $ *,$ we have that 
$$
\E(|T[0]|)=\E\left(\sum_{n\geq 0} \mathbf{1}\{Y_n=*\}\right)=\sum_{m\geq 1 }\E\left( \mathbf{1}\{\eta_m < \infty \}\right) \le  \sum_{m\geq 1}(1-c)^m=\frac{1-c}{c}.
$$
The proof of part $2$ of Theorem \ref{teo:infinitepath} follows directly from \eqref{eq:t0n}.
To prove part $3$ of Theorem \ref{teo:infinitepath}, 
note that for any $n \geq 0$, 
\begin{multline*}
\P(|T[0,+\infty) |>n)= \\
\sum_{a_{-n}^{-1}\in \A_*^n}\P(X_{-n}^{(-n)}=a_{-n})
\cdots \P(X_{-1}^{(-n)}=a_{-1}|(X^{(-n)})_{-n}^{-2}=a_{-n}^{-2}) \times \\
\times \P\left(\bigcup_{k=0}^{\infty}\{X_{k}^{(-n)}=*\} |(X^{(-n)})_{-n}^{-1}=a_{-n}^{-1}\right).
\end{multline*}
By noticing that
\begin{multline*}
\P\left(\bigcup_{k=0}^{\infty}\{X_{k}^{(-n)}=*\} |(X^{(-n)})_{-n}^{-1}=a_{-n}^{-1}\right)= \\ 
\sum_{k=0}^{\infty}\P\left(\{X_{k}^{(-n)}=*,X_{j}^{(-n)}\neq *, j=0,\ldots,k-1\} |(X^{(-n)})_{-n}^{-1}=a_{-n}^{-1}\right),
\end{multline*}
we conclude that
\begin{multline}\label{eq:inter}
\P(|T[0,+\infty)|>n)=\\
\sum_{a_{-n}^{-1}\in \A_*^n}\sum_{k=0}^{\infty}\sum_{b_{0}^{k-1}\in \A^k}
\alpha(a_{-n})\alpha(a_{-(n-1)}|a_{-n})\cdots \alpha(a_{-1}|a_{-n}^{-2})\alpha(b_0|a_{-n}^{-1})\\
\cdots \alpha(b_{k-1}|b_0^{k-2}a_{-n}^{-1})\alpha(*|b_0^{k-1}a_{-n}^{-1}) . 
\end{multline}
Let
$$
\bar{\eta}=\sup\{m\geq 0:Y_m=*\},
$$
with the convention $\sup \emptyset =-\infty$. Interpreting (\ref{eq:inter}) in terms of the auxiliary chain, we see that 
$$
\P(|T[0,+\infty)|>n)=\P(\bar{\eta} \geq n).
$$
Now, note that
$$
\left\{\bar{\eta}>n, \text{ for all } n=1,2,\ldots \right\}=\left\{\eta_j<\infty, \text{ for all } j=1,2,\ldots \right\},
$$
such that
$$
\lim_{n\to \infty}\P(|T[0,+\infty]|>n)=\lim_{n\to \infty}\P(\bar{\eta} >n) =\lim_{n\to \infty}\P(\eta_n <\infty)=0.
$$

\end{proof}

Now we prove Theorem \ref{teo:renewal}.

\begin{proof}
Let us consider a representation of the process $(Y_n)_{n=0,1,\ldots}$ using a sequence of i.i.d. uniform random variables $(V_n)_{n=1,2,\ldots}$ assuming values in $[0,1)$. Let
$$
Y_n=
\begin{cases}
g &\text{, if } V_n \in \left[\sum_{b<g}\alpha(b|Y_0^{n-1}),\sum_{b\leq g}\alpha(b|Y_0^{n-1})\right), \text{ for } g\in \A, \\
* &\text{, in the contrary case}.
\end{cases}
$$
Now, we define 
$$
\tilde{Y}_n=
\begin{cases}
g &\text{, if } V_n \in \left[\sum_{b<g}\alpha(b|Y_{0}^{n-1}),\sum_{b<g}\alpha(b|Y_{0}^{n-1})+ \alpha(g|Y_{L_{n-1}+1}^{n-1})\right), \text{ for } g\in \A, \\
* &\text{, in the contrary case},
\end{cases}
$$
with $L_{n-1}=\sup\{m=0,\ldots,n-1: \tilde Y_m=*\}$. Here, we use the convention $\sup\emptyset=0$. Note that, by Remark \ref{rem:monotone}, we have that $\alpha(g|Y_{L_{n-1}+1}^{n-1})\leq \alpha(g|Y_{0}^{n-1})$, for any $Y_{0}^{n-1} \in \A_*^n,$ and therefore the process $(\tilde{Y}_n)_{n=0,1,\ldots}$ is well defined. Moreover, for all $n,$ if $ \tilde Y_n = g, $ then also $ Y_n = g.$ So we have that $\tilde{Y}_n \in \{Y_n,*\}$, for any $n \geq 0,$ and the two processes coincide during excursions of $ \tilde Y$ out of $*.$ Therefore, for any $m \geq 0, $
$$
\sum_{n=0}^m \mathbf{1}\{Y_n=*\} \leq \sum_{n=0}^m \mathbf{1}\{\tilde{Y}_n=*\}.
$$
Moreover,
\begin{equation}\label{eq:YandYtilde}
p_m=\P(Y_m=*)\leq \P(\tilde{Y}_m=*).
\end{equation}

Now, considering $\tau_0:=0$ and $\tau_n:=\inf\{m>\tau_{n-1}:\tilde{Y}_m=*\}$, we have that $(\tau_m)_{m\geq 1}$ are the marks of a renewal process, and the excursions $( \tilde Y_{ \tau_n + k })_{ 0 \le k < \tau_{n+1} } , n \geq 1, $ are i.i.d. In particular, standard renewal arguments imply that we have almost sure convergence 
$$ \frac1n \sum_{k=1}^n  \mathbf{1}\{\tilde{Y}_k=*\} \to \frac{1}{\E ( \tau_1)},$$
and thus, by dominated convergence, also 
$$ \frac1n \sum_{k=1}^n  \P(\tilde{Y}_k=*) \to  \frac{1}{\E ( \tau_1)}.$$ 
But, by assumption,
$$
\E(\tau_1)=\sum_{m=1}^{+\infty} \P(\tau_1\geq m)=\sum_{m=1}^{+\infty} \rho_m=+\infty ,
$$
such that 
$$ \frac1n \sum_{k=1}^n  \P(\tilde{Y}_k=*) \to  0.$$ 
We now show that this implies necessarily that $$
\lim_{n\to \infty}\P({Y}_n=*) = \lim_n p_n = 0.
$$
Indeed, by (\ref{eq:pn}), the sequence $ (p_n)_n$ is a non-increasing sequence of positive numbers which thus converges. Suppose that $ \lim_n p_n > 0.$ Then $ \liminf_n \P(\tilde{Y}_n=*) > 0 $ as well and then there exist $ c_1, n_1 > 0 $ such that for all $ n \geq n_1, $ $ \P(\tilde{Y}_n=*) \geq c_1 > 0.$ This is in contradiction with $ \frac1n \sum_{k=1}^n  \P(\tilde{Y}_k=*) \to  0.$ 

Together with \eqref{eq:YandYtilde} and \eqref{eq:t0nbound}, this concludes the proof.
\end{proof}

The proofs of Theorems \ref{teo:algops2_1} and \ref{teo:algops2_2} are analogous to the proofs of Theorems \ref{teo:infinitepath} and \ref{teo:renewal}, and in the following we just sketch the main ideas.

\begin{proof}
Let $(V_n^{a_{- \hat n}^{-1}}:n\geq0,a_{- \hat n}^{-1}\in \mathcal{C})$ be i.i.d. uniform random variables in $[0,1)$. For any $n\geq 0$ and $a_{- \hat n}^{-1}\in \mathcal{C}$, the process $(Z_n^{a_{-\hat n}^{-1}})_{n\geq 0}$ is defined as follows. In the following, let us consider the convention $\inf\emptyset=*$. For $k\geq0$ and $n=kn_0,\ldots,kn_0-1$, we have
\begin{equation*} 
Z_n^{a_{-\hat{n}}^{-1}}=\inf\left\{g\in \A: V_n^{a_{- \hat n}^{-1}}< \sum_{b\leq g}\alpha(g|(Z^{a_{-\hat{n}}^{-1}})_{kn_0}^{n-1}a_{-\hat{n}}^{-1})\right\}.
\end{equation*}
Now, we define the process $(Z_n)_{n\geq 0}$ as follows. For $n=kn_0,\ldots,kn_0-1$, we have
$$
Z_n=
\begin{cases}
b, &\text{ if } Z_n^{a_{-\hat{n}}^{-1}}=b, \text{ for all } a_{-\hat{n}}^{-1} \in \mathcal{C} \text{ and some } b\in \A,\\
*, &\text{ in the contrary case}.
\end{cases}
$$
For $k\geq 1$ and $n=kn_0,\ldots,kn_0-1$, we have two cases. If $Z_{kn_0-\hat n}^{kn_0-1}=b_{-\hat n}^{-1}$, for some $b_{-\hat n}^{-1} \in \mathcal{C}$, we have
\begin{equation*} 
Z_n=\inf\left\{g\in \A: V_n^{b_{- \hat n}^{-1}}< \sum_{b\leq g}\alpha(g|Z_{0}^{n-1})\right\}.
\end{equation*}
In the contrary case, we have
$$
Z_n=
\begin{cases}
b, &\text{ if } Z_n^{a_{-\hat{n}}^{-1}}=b, \text{ for all } a_{-\hat{n}}^{-1} \in \mathcal{C} \text{ and some } b\in \A,\\
*, &\text{ in the contrary case}.
\end{cases}
$$

We have that
$$
\P(|\tilde{T}[0]|>n)=\P(Y_{nn_0}=*)
$$
and
$$
\E(|T[0]|)=\sum_{n\geq 0} \P(Y_{nn_0}=*)=\E\left(\sum_{n\geq 0} \mathbf{1}\{Y_{nn_0}=*\}\right).
$$
By proceeding in a similar way as in the proofs Theorems \ref{teo:infinitepath} and \ref{teo:renewal}, the assertions of Theorems \ref{teo:algops2_1} and \ref{teo:algops2_2} follow. 
\end{proof}

\section{Discussion} \label{sec:dicussion}
In this section we will compare our results to algorithms and results presented in other articles. In particular we will show that the results presented in these articles cannot be applied to the examples presented in Section \ref{sec:examples}.
Finally we will also discuss some limitations of our approach. 
 \subsection{Comparing to other results}
Our framework is similar to the one presented in \cite{desantispiccioni}, and the present article is very much inspired by this work.  For a probability kernel $p$, \cite{desantispiccioni} consider a coupling function $F:[0,1]\times \mathcal{H} \to \A$ such that
$$
F(u_0,x_{-1},x_{-2}, \ldots)=\sum_{g\in \A}g \mathbf{1}\left\{u_0 \in \bigcup_{m=0}^{-\infty}I_{m}(g,x_{-1},\ldots, x_{m})\right\},
$$
where $\mathcal{H} \subset \A^{\N}$ is the set of admissible histories. This means that each possible past defines a partition of the interval $[0,1]$. The authors study the backward coalescence time $\tau_0(U_{-\infty}^{0}):$  $-\tau_0$ is a stopping time, and $X_0$ can be defined from $U_0,U_{-1}, \ldots, U_{\tau_0}.$ The existence of such a backward coalescence time and its finiteness directly induce a perfect simulation algorithm. 

Theorem 1 of \cite{desantispiccioni} deals with perfect simulation algorithms having backward coalescence times that are based on {\it information depths}. A necessary condition for this theorem to work is that almost surely
$$
\lim_{h\to \infty}A_h(U_{-h}^{-1}) =1, 
$$
where the information depth $A_h(U_{-h}^{-1})$ is given by
$$
A_h(U_{-h}^{-1})=\inf\{\beta(w_{-h}^{-1}):w_{-\infty}^{-1}\in \H:w_{-k}=g \text{ if } U_{-k}\in I_0(g), k\leq h\}.
$$

Let us compare this approach with our Example \ref{ex:imitation}. Notice that in this example, 
$$
\beta(*a_{-k}^{-1})=c,
$$
for any $a_{-k}^{-1} \in \A^k$. Therefore, $A_h(U_{-h}^{-1})=c$, whenever $U_{-1}>c$. So $\lim_{h\to \infty}\P(A_h(U_{-h}^{-1})<1) \geq 1-c$, such that Theorem 1 of \cite{desantispiccioni} cannot be applied in the situation of this example.

In the framework of Remark \ref{rem:imitationexample}, assuming additionally that
$$
\inf_m f_m(k)=0,
$$
for each $k\geq 1$, then 
$$
\beta(*a_{-k}^{-1})=c,
$$
for any $a_{-k}^{-1} \in \A^k$.  Again Theorem 1 of \cite{desantispiccioni} cannot be applied here.

Finally, in Example \ref{ex:other},
by supposing that for any $k\geq 1$, 
$$
\lim_{j\to \infty}\inf\{q_m(k):m > j\}= 0,
$$
we have again that 
$$
\beta(*a_{-k}^{-1})\leq c+(1-c)\sum_{k=1}^{\infty}\inf_{m \geq \sum_{j=1}^{k} a_{-j}}q_m(k)=c,
$$
for any $a_{-k}^{-1} \in \A^k$. 




\cite{gallo} presents a perfect simulation algorithm for chains with memory of variable length. Considering a countable alphabet $A$ and a tree $\tau \subset \{a_{-n}^0: a_{-n}^0 \in A^n, n\in \{0,1,\ldots\}\cup\{\infty\}\}$, they propose a perfect simulation algorithm to obtain a sample of the stationary chain compatible with the probabilistic context tree $(\tau,p),$ where $p=(p(g|w):g\in \A, w\in \tau)$. Note that probabilistic context trees are special cases of chains with memory of infinite length. 

\cite{gallo} constructs a coupling function $F:[0,1] \times \tau \to [0,1]$. In a framework similar to the one presented in \cite{cff} and \cite{desantispiccioni},  each context defines a partition in the interval $[0,1]$. A perfect simulation algorithm and a visible regeneration scheme are obtained imposing conditions on $(l(n): n\in \N)$. Examples \ref{ex:imitation} and \ref{ex:other} considered in this article are in fact chains with memory of variable length however, the results presented by \cite{gallo} can not be applied to these examples.

The perfect simulation algorithm introduced by \cite{ciftp} uses the \textit{canonical coupling function} to propose a construction that follows the idea of the original coupling from the past algorithm in \cite{cftp}. At each step of the algorithm, after sampling the uniform random variable that is used to obtain the value of the chain at a certain instant, the algorithm checks all possible values of the chain at that instant for all the possible past infinite strings. \cite{ciftp} present kernels in which this algorithm can be applied even in the case $\beta=0$. This article focuses on algorithmic aspects and does not provide a deep study on the conditions guaranteeing that the algorithm stops almost surely in a finite time.

The perfect simulation algorithm introduced by \cite{gallogarcia} uses the \textit{canonical coupling function} to propose a construction that deals with chains with memory of infinite length which are locally continuous with respect to a context tree. In particular, this perfect simulation algorithm assumes $\beta>0$. Our construction is obviously different from this one since it needs a different coupling function than the canonical one. The comparison between our results in the cases $\beta>0$ and the results presented by \cite{gallogarcia} is difficult.

\cite{gallotakahashi} consider regular probability kernels $p$ on a finite alphabet $A$. This means that $p$ is strongly non-null ($\inf_{g}\alpha(g)>0$) and continuous. Moreover, the considered kernel is attractive, which means that $p$ satisfies some monotonicity properties. Note that our results and examples do not require any of the assumption above.

\cite{gallotakahashi} also present a concentration of measure result at exponential rate that holds for any process assuming values in a finite alphabet admitting a coupling from the past algorithm. We can use this result in our framework (see Proposition \ref{prop:gallotakahashi}). With the additional assumption that the kernel is regular, Theorem 4 of \cite{gallotakahashi} proves that the concentration of measure at exponential rate implies the uniqueness.

\subsection{Limitations of our method}
We continue this discussion with an example showing that there exist processes that cannot have a perfect simulation using our algorithms even though they are ergodic and possess a unique translation-invariant stationary measure.

\begin{example} \label{ex:cantuseouralgorithms}

Consider $\A=\{0,1\}$ and let $r_n, n\geq 2$  be an increasing sequence of positive numbers such that $r_n\to 1$ as $n\to \infty$. Let $\mathbf{0,1} $ be the histories formed by all zero and all one, respectively. Let $\H= \{0,1\}^\N \setminus \{ \mathbf {0,1} \}$.
For any $x_{-\infty}^{-1}\in S$ and for any $g\in \A$, let
\begin{equation}
    p(g | x^{-1}_{-\infty})=
r_{\tau(x^{-1}_{-\infty})}\mathbf{1}_{\{x_{-1}\}}(g)+(1-r_{\tau(x^{-1}_{-\infty})})\mathbf{1}_{\{x_{-1}\}^c}(g),
 \label{esempio2}
\end{equation}
where
\[
\tau(x^{-1}_{-\infty}) = \sup\{ k\in \N : x_{-1} = \cdots = x_{-k}\}.
\]
Let us assume that
$$
\sum_{n=2}^{\infty}\prod_{j=2}^nr_j<\infty.
$$
First, note that $\beta=0$. Moreover, for any $n\geq 1$ and for any $b\in \A$, we have that if $a_{-j}=b, \text{ for any } j=1,\ldots,n$,
$$
\alpha(g|a^{-1}_{-n})=0,
$$
for $g\neq b$. Therefore, for any $n \geq 1$ the Markov kernel of order $n$
defined in Definition \ref{def:ptildeandpmarkov} has the states $\mathbf{0}^{-1}_{-n}$ and $\mathbf{1}^{-1}_{-n}$ as absorbing states and therefore, once more, the condition of possessing a unique closed class is violated. 
However, we know that the process is ergodic and that there exists a translation-invariant stationary measure of the process. In fact, just note that the time in which the chain remains in a single symbol before jumping to the other symbol has finite expectation.
\end{example}
\subsection{Comparing our coupling function to the ``canonical'' one
}\label{sec:cc}
The main difference between the framework presented in \cite{cff} and \cite{desantispiccioni} and the one introduced in Section \ref{sec:proofteocftp} for Algorithm \ref{algo:ps} is the fact that our coupling function is defined on the set $[0,1]^{+\infty}$ instead of $[0,1]\times \H$. In other words, the coupling function depends on the uniform random variables instead of depending on a string of symbols of $\A$. However, the process $(X_n)_{n\in \Z}$ that we defined using the coupling function $F$ in Section \ref{sec:proofteocftp} can be constructed using a coupling function ``of the same type'' as those considered in 
\cite{cff} or in \cite{desantispiccioni}. This implies that our construction here is more general. 
In what follows, we explain exactly what we mean by ``of the same type''.

The coupling function proposed by \cite{cff} (which is called \textit{canonical coupling function} by \cite{desantispiccioni}) has the form
\begin{equation}\label{eq:canonical}
G(u,x_{-\infty}^{-1}):=\sum_{g\in \A}g\sum_{n=0}^{+\infty}\mathbf{1}\{u \in \tilde{I}_n(g|x_{-n}^{-1})\},
\end{equation}
where $(\tilde{I}_n(g|x_{-n}^{-1}):g\in \A, n=0,1,\ldots)$ are right-open intervals. By assuming that 
\begin{equation}\label{eq:continuitycanonical}
\alpha(g|x_{-n}^{-1})\to p(g|x_{-\infty}^{-1}), \text{ as } n \to \infty, \text{ for any } g\in \A \text{ and } x_{-\infty}^{-1}\in \H,
\end{equation}
we have that $(\tilde{I}_n(g|x_{-n}^{-1}):g\in \A, n=0,1,\ldots)$ is a partition of the interval $[0,1)$, where the first intervals in the left are $\tilde{I}_0(g)$, sequentially for $g\in \A$, satisfying $|\tilde{I}_0(g)|=\alpha(g)$, and the next intervals are $I_1(g|x_{-1})$, sequentially for $g\in \A$, satisfying $|\tilde{I}_1(g|x_{-1})|=\alpha(g|x_{-1})-\alpha(g)$, and so on. If the condition in \eqref{eq:continuitycanonical} is not satisfied, we additionally define $G(u,x_{-\infty}^{-1})=*$ for
$$
u\in [0,1)\setminus\bigcup_{n=0}^{\infty}\bigcup_{g\in \A}\tilde{I}_n(g|x_{-n}^{-1}).
$$

This coupling function follows the natural order of considering the $n$-th first symbols of the past fixed successively for $n=0,1,2,\ldots$. This idea can be generalized by considering that we successively fix the symbols of the past in a different way. For a sequence of sets $\tilde{S}_0 \subseteq \tilde{S}_1 \subseteq \ldots$, where $\tilde{S}_n \subseteq \{1,\ldots, n\}$ for any $n=0,1,\ldots$, and for $x_{-\infty}^{-1}\in \H$, let
$$
x_{-k}^{(\tilde{S}_n)} =
\begin{cases}
x_{-k}, &\text{ if } k \in \tilde{S}_n, \\
*, &\text{ if } k \not\in \tilde{S}_n.
\end{cases}
$$
In this framework we may construct a coupling function $G^{(\tilde{S}_n)_{n\geq 0}}:[0,1]\times \H \to \A$, similar to \eqref{eq:canonical}, in which the first intervals in the left are $\tilde{I}^{(\tilde{S}_0)}_0(g)$, sequentially for $g\in \A$, satisfying $|\tilde{I}_0^{(\tilde{S}_0)}(g)|=\alpha(g)$, and the next intervals are $I_1^{(\tilde{S}_1)}(g|x_{-1}^{(1)})$, sequentially for $g\in \A$, satisfying $|\tilde{I}_1^{(\tilde{S}_1)}(g|x_{-1}^{(1)})|=\alpha(g|x_{-1}^{(\tilde{S}_1)})-\alpha(g)$, and so on. This means that we sequentially choose to fix symbols in the past based on the increasing sequence of sets $\tilde{S}_1, \tilde{S}_2, \ldots$.

Coming back to our Algorithm \ref{algo:ps}, for any $m\in \Z$ and $n\geq 1$, let $S_n(U_{m-n}^{m-1})=\{j\in \{1,\ldots,n\}: X^{(m-n)}_{m-j}\neq *\}$. Moreover, let $S_0=\emptyset$.
It follows that $(X_n)_{n\in \Z}$ satisfies
$$
X_n=G^{(S_j(U_{-n-j}^{-n-1}))_{j\geq 0}}(U_n,X_{-\infty}^{n-1}),
$$
where the increasing sequence that we consider is random. Therefore, the above construction shows that the coupling function introduced in Section \ref{sec:proofteocftp} for Algorithm \ref{algo:ps} can be interpreted as a modification of the \textit{canonical coupling function} which instead of successively considering the $n$-th first symbols of the past fixed successively for $n=0,1,2,\ldots$, consider that the part of the past to be fixed is random, depending only on $(U_n)_{n\in \Z}$. The part of the past to be fixed is exactly the spontaneous symbols and those obtained by the update procedure introduced in Algorithm \ref{algo:ps}.

\subsection{The set of admissible histories}

The set $\H$ (see Definition \ref{def:kernel_and_adimissible_histories}) contains all the  histories that are present in the stationary regime as proved in Proposition \ref{prop:admissiblehistories}. However, not all elements of $\H$ are present in the stationary regime of the process, as illustrated by the following example.

\begin{example}
Consider the Markov chain assuming values in $\A=\{1,2,3\}$ with transition probabilities given by the matrix $p:\A\times\A\to [0,1]$ defined as follows
$$
p(1|2)=p(2|1)=p(1|1)=p(2|2)=\frac{1}{2}, \quad  p(3|3)=0.9, \text{ and } p(1|3)=0.1.
$$
In this example, the symbol $3$ is transient, and therefore, this symbol has no mass in the stationary measure of the process. However, we have that
$3^{\N} \in \H.$
\end{example}

The following example illustrates the importance of the definition of the set of admissible histories. 
We present a kernel in which the assumptions of our results do not hold when we consider a set greater than $\H$ instead of $\H$.



\begin{example} \label{ex:admissiblehistories}

Consider $\A=\{0,1,2\}$ and let $r_n, n\geq 2$  be an increasing sequence of positive numbers such that $r_n\to 1$ as $n\to \infty$. Denote $\mathbf{0,1,2} $ as the histories formed by all zero, all one and all two, respectively. Let $S= \{0,1,2\}^\N \setminus \{ \mathbf {0,1,2} \}$.
For any $x_{-\infty}^{-1}\in S$ and for any $g\in \A$, let
\begin{equation}
    p (g | x^{-1}_{-\infty})=
\begin{cases}
r_n &\text{ if } g=x_{-1} \text{ and } x_{-1} = \cdots = x_{-n} , x_{-n}\neq x_{-n -1},\text{with } n \geq 2, \\
\frac{1}{2}(1-r_n)  &\text{ if } g \neq x_{-1} \text{ and } x_{-1} = \cdots = x_{-n} , x_{-n}\neq x_{-n -1},\text{with } n \geq 2, \\
\frac{1}{2}\mathbf{1}\{g \neq x_{-1}\}, &\text{ iff } x_{-1}\neq x_{-2}.
\end{cases} \label{esempio}
\end{equation}

Let us assume that
$$
\sum_{n=2}^{\infty}\prod_{j=2}^nr_j<\infty.
$$
In this example, clearly $\H \subset S.$ We will now explain why, when taking $\H = S$, 
the assumptions required for Algorithm \ref{algo:ps} and  Algorithm \ref{algo:ps2} are not satisfied. In fact, first we have that 
$$
\sum_{g\in \A}\inf\{p(g|x_{-\infty}^{-1}):x_{-\infty}^{-1}\in S\}=0.
$$
Moreover, for any $n\geq 1$, for any $b,g\in \A$ with $g\neq b$,
$$
\inf\{p(g|x_{-\infty}^{-1}):x_{-\infty}^{-1}\in S, x_{-j}=b, \text{ for all } j=1,\ldots,n\} =0.
$$
Therefore, for any $n \geq 1$ the Markov kernel of order $n$
defined in Definition \ref{def:ptildeandpmarkov}  always has the states $\mathbf{0}^{-1}_{-n}$, $\mathbf{1}^{-1}_{-n}$, $\mathbf{2}^{-1}_{-n}$ as absorbing states, which violates the condition of possessing a unique closed class. 

Finally, recalling \eqref{eq:admissiblehistories}, we have that $\H=\{a_{-\infty}^{-1}\in \A^{\N}:a_{-j} \neq a_{-(j-1)}, \text{ for all } j= 2,3,\ldots\}$. Therefore,
$$
\alpha(g|b)=\frac{1}{2},
$$
for any $b\in \A$ and $g\neq b$. This implies that the Markov kernel of order $1$ defined in Definition \ref{def:ptildeandpmarkov} is aperiodic. Therefore, Assumption \ref{as:algo2} holds for this kernel.

Note that by considering $\H$ instead of $S$ does not cause problems when our aim is to sample the chain adapted to the kernel in stationary regime. By the first lemma of Borel-Cantelli, all histories with two equal adjacent symbols are not present in the stationary regime.
\end{example}

\section*{Acknowledgments}

This work was produced as part of the activities of FAPESP Research, Innovation and Dissemination Center for Neuromathematics (grant \# 2013/ 07699-0 , S.Paulo Research Foundation (FAPESP)). 
KL was successively supported by FAPESP fellowship (grant 2022/07386-0 and 2023/12335-9). We thank Mauro Piccioni, Sandro Gallo and Vicenzo Bonasorte for interesting discussions.

\bibliographystyle{apalike}
\bibliography{bib}


\end{document}